\documentclass[reqno]{amsart}
\usepackage{epsfig,latexsym,amsfonts,amssymb,amsmath,amscd}
\usepackage{amsthm}
\usepackage[mathscr]{eucal}
\usepackage{mathrsfs}
\usepackage{mathbbol}
\usepackage{oldgerm,units}

\usepackage{epsfig,latexsym,amsfonts,amssymb,amsmath,amscd,graphics,epic}
\usepackage{amsfonts,amssymb,amsmath,amscd,amsthm}
\usepackage[mathscr]{eucal}
\usepackage{mathrsfs}
\usepackage{oldgerm,units}
\usepackage{wrapfig,epsfig}

\usepackage{ifthen}
\usepackage{mathbbol}

\usepackage{amsthm}
\usepackage[mathscr]{eucal}
\usepackage{mathrsfs}
\usepackage{mathbbol}
\usepackage{oldgerm,units}
\usepackage{wrapfig}





\newcommand{\Ref}[1]{(\ref{#1})}

\newcommand{\Real}{\mathbb R}

\newcommand{\Net}{\mathbb N}






\newcommand{\one}{\mathbb{1}}
\newcommand{\zero}{\mathbb{0}}


\newcommand{\trop}[1]{\mathcal{#1}}

\newcommand{\tB}{\trop{B}}

\newcommand{\tG}{\trop{G}}

\newcommand{\tM}{\trop{M}}

\newcommand{\tS}{\trop{S}}
\newcommand{\tT}{\trop{T}}










\newcommand{\al}{\alpha}
\newcommand{\bt}{\beta}

\newcommand{\lm}{\lambda}
\newcommand{\Lm}{\Lambda}










\newcommand{\OP}{\left(}
\newcommand{\CP}{\right)}


\newcommand{\id}{\inva{\aad}}




\hfuzz5pt 
\vfuzz5pt 

\pagestyle{empty}
    \ifx\proof\undefined
    \newenvironment{proof}{
    \smallskip
    \noindent\emph{Proof.}}{\hfill\(\Box\)
    \bigskip
    } \fi

\newcommand{\vMat}[4]{\OP \begin{array}{cc}
  #1 & #2 \\
  #3 & #4
\end{array}\CP}








\newcommand{\ifdef}[3]{\ifthenelse{\equal{#1}{true}}{#2}{#3}}




\pagenumbering{arabic} \pagestyle{plain}

\textwidth 165mm \textheight 232mm \topmargin -5mm \evensidemargin
-2mm \oddsidemargin -2mm
\newtheorem*{examp*}{Example}

\def\scrR{\mathscr R}
\def\trn{{\operatorname{t}}}
\def\ghost{\text{ghost}}
\def\regular{nonsingular}

\def\lay{\operatorname{lay}}
\def\um{I}
\def\Fz{F}
\def\base{\tB}

 \input amssymb.sty
 \pagestyle{headings}

\def\rank{\operatorname{rank}}
\def\rankS{\operatorname{rank_S}}

\def\bidomain0{bi-\domain0}
\def\bidomains0{bi-\domains0}
\def\bisemifield0{bi-\semifield0}
\def\bisemifields0{bi-\semifields0}

\def\id{\operatorname{id}}

\newcommand{\ds}[1]{\ {#1} \ }

\newcommand{\Inu}[1]{\widehat{#1}}

\input xy
\xyoption{all}

\def\FunSR{\operatorname{Fun} (\tSS,R)}

\def\({\left(}
\def\){\right)}

\def\Rpl{(\mathbb R, + \,)}
\def\Npl{(\Net, + \,)}
\def\Qpl{(\Q, + \,)}
\def\Zpl{(\Z, + \,)}

\def\Real{\mathbb R}
\def\Q{\mathbb Q}

\def\zm{Z}
\def\tdet{determinant}

\def\ltw{0.7\textwidth}

\def\beginA {\pSkip \qquad \begin{minipage}{\ltw}}
\def\endA{\end{minipage} \pSkip}

\newcommand\boxtext[1]{\pSkip \qquad \qquad \qquad \framebox{\parbox{\ltw}{#1}}\pSkip}

\newcommand{\xl}[2]{\,\,{^{[#2]}}{#1}\,}

\newcommand{\Det}[1]{ \left|{#1}\right|}

\newcommand{\etype}[1]{\renewcommand{\labelenumi}{(#1{enumi})}}
\def\eroman{\etype{\roman}}

\def\rat{\operatorname{rat}}

\def\tG{\mathcal{G}}
\def\tSS{\mathcal{S}}
\def\tGz{\mathcal{G}_\zero}
\def\bfi{{\bf i}}

\def\bfa{{\bf a}}

\def\one{\mathbb{1}}
\def\zero{\mathbb{0}}
\def\pSkip{\vskip 1.5mm \noindent}

\def\nucong{\cong_\nu}

\def\nb{\nabla}

\def\pipe{{\underset{{\ \, }}{\mid}}}
\def\pipeGS{{\underset{\operatorname{\, gs }}{\mid}}}
\def\pipeL{{\underset{{L}}{\mid}}}

\def\SpipetWl{{\underset{{S\tilde \ell}}{\mid}}}
\def\SdpipetWl{{\underset{{S \ell;d}}{\mid}}}
\def\SdpipetW2{{\underset{{S \ell;2}}{\mid}}}
\def\pipeWl{{\underset{{\ell}}{\mid}}}
\def\pipeSWl{{\underset{{S\ell}}{\mid}}}
\def\pipeWL{{\underset{L}{\mid}}}
\def\lmodg{\mathrel  \pipeGS \joinrel \joinrel \joinrel =}
\def\lmodL{\mathrel  \pipeL \joinrel \joinrel =}
\def\tilG{\widetilde{G}}

\def\pipe1{{\underset{{1}}{\mid}}}
\def\lmod1{\mathrel  \pipe1  \joinrel \joinrel =}

\def\lmodWl{\mathrel  \pipeWl   \joinrel \joinrel =}
\def\lmodWL{\mathrel  \pipeWL   \joinrel \joinrel =}
\def\SlmodWl{\mathrel  \pipeSWl   \joinrel \joinrel \joinrel =}

\def\SltmodWl{\mathrel  \SpipetWl   \joinrel \joinrel \joinrel =}
\def\SdltmodWl{\mathrel  \SdpipetWl   \joinrel \joinrel \joinrel \joinrel  \joinrel =}
\def\SdltmodW2{\mathrel  \SdpipetW2   \joinrel \joinrel \joinrel \joinrel  \joinrel =}
\def\gperp{ {\perp \joinrel  \joinrel \joinrel  \perp }}

\newcommand{\bil}[2]{\langle{#1},{#2}\rangle}
\def\tGz{\tG_\zero}

\def\tTz{\tT_\zero}

\def\sig{\sigma}
\def\la{\lambda}
\def\al{\alpha}
\def\bt{\beta}

\def\semiring0{semiring$^\dagger$}
\def\semialg0{semialg$^\dagger$}
\def\alt{\operatorname{alt}}
\def\cape{\operatorname{Cap}}
\def\stn{\operatorname{Stn}}
\def\mun{e}
\def\lmodWLnu{\mathrel  \pipeWL   \joinrel \joinrel \equiv_\nu}

\def\tr{\operatorname{tr}}
\def\domain0{domain$^\dagger$}
\def\domains0{domains$^\dagger$}
\def\semirings0{semirings$^\dagger$}
\def\semifield0{semifield$^\dagger$}
\def\semifields0{semifields$^\dagger$}

\def\tT{\mathcal{T}}
\def\tSS{S}

\def\Net{\mathbb N}
\def\Z{\mathbb Z}

\def\Fun{{\operatorname{Fun}}}

\def\one{\mathbb 1}
\def\zero{\mathbb 0}
\def\rone{\one_R}
\def\rzero{\zero_R}

\def\fzero{\zero_F}

\newcommand{\adj}[1]{\,\operatorname{adj}(#1)}

\hfuzz1pc 



\newtheorem{thm}{Theorem} [section]
\newtheorem{exampl}[thm]{Example}

\newtheorem*{thm*}{Theorem}
\newtheorem{cor}[thm]{Corollary}
\newtheorem{lem}[thm]{Lemma}
\newtheorem{prop}[thm]{Proposition}
\newtheorem*{claim*} {Claim}
\newtheorem{acknowledgment*}[thm] {Acknowledgment}
\newtheorem{examp}[thm]{Example}

    \newtheorem*{remarks*} {Remarks}
 \newtheorem*{remark*}{Remark}
 \newtheorem{defn}[thm]{Definition}
\newtheorem{construction}[thm]{Construction}
\newtheorem{rem}[thm]{Remark}




 \renewcommand{\sectionmark}[1]{}

\renewcommand{\a}{\alpha}

\def\R {R}
 \begin{document}

\title[Algebraic structures]
{Algebraic structures of tropical mathematics} 
\author[Z. Izhakian]{Zur Izhakian}
\address{Department of Mathematics, Bar-Ilan University, Ramat-Gan 52900,
Israel} \email{zzur@math.biu.ac.il}
\author[M. Knebusch]{Manfred Knebusch}
\address{Department of Mathematics,
NWF-I Mathematik, Universit\"at Regensburg 93040 Regensburg,
Germany} \email{manfred.knebusch@mathematik.uni-regensburg.de}
\author[L. Rowen]{Louis Rowen}
 \address{Department of Mathematics,
 Bar-Ilan University, 52900 Ramat-Gan, Israel}
 \email{rowen@math.biu.ac.il}

\thanks{The research of the first and third authors was supported  by the
Israel Science Foundation (grant No.  448/09).}

\thanks{The research of the first author also was conducted under the auspices of the
Oberwolfach Leibniz Fellows Programme (OWLF), Mathematisches
Forschungsinstitut Oberwolfach, Germany}



\subjclass[2010]{Primary 06F20, 11C08, 12K10, 14T05, 14T99, 16Y60;
Secondary  06F25, 16D25. }

\date{\today}


\keywords{tropical algebra, layered supertropical domains,
polynomial semiring, d-base, s-base, bilinear form}


\begin{abstract}

Tropical mathematics often is defined over an ordered cancellative
monoid $\tM$, usually taken to be $(\mathbb R, +)$ or  $(\mathbb
Q, +)$. Although  a rich theory has arisen from this viewpoint,
cf.~\cite{L1}, idempotent semirings possess a restricted algebraic
structure theory, and also do not reflect certain
valuation-theoretic properties, thereby forcing researchers to
rely often on combinatoric techniques.

 In this paper we describe
an alternative structure, more compatible with valuation theory,
studied by the authors over the past few years, that permits
fuller use of algebraic theory especially in understanding the
underlying tropical geometry. The idempotent max-plus algebra $A$
of an ordered monoid~$\tM$ is replaced by~ $R: = L\times \tM$,
where $L$ is a given indexing semiring (not necessarily with 0).
In this case we say $R$ \textbf{layered} by $L$. When $L$ is
trivial, i.e, $L = \{ 1 \}$, $R$ is the usual bipotent max-plus
algebra. When $L = \{ 1, \infty \}$ we recover the ``standard''
supertropical structure with its ``ghost'' layer. When $L =
\mathbb N  $ we can describe multiple roots of polynomials via a
``layering function'' $s: R \to L$.

Likewise, one can define the layering $s: R^{(n)} \to L^{(n)}$
componentwise; vectors $v_1, \dots, v_m$ are called
\textbf{tropically dependent} if each component of some nontrivial
linear combination $ \sum \a_i v_i$ is a ghost, for ``tangible''
$\a_i \in R$. Then an $n\times n$ matrix has tropically dependent
rows iff its permanent is a ghost.

We explain how supertropical algebras, and more generally layered
algebras,  provide a robust algebraic foundation for tropical
linear algebra, in which many classical tools are available. In
the process, we provide some new results concerning the rank of
d-independent sets (such as the fact that they are semi-additive),
put them in the context of supertropical bilinear forms, and lay
the matrix theory in the framework of identities of
 semirings.
\end{abstract}
\maketitle



\numberwithin{equation}{section}

\section{Introduction}\label{sec:Introduction}
Tropical geometry, a rapidly growing area expounded  for example
in \cite{Gat,IMS,L1,MS,SS}, has been based on two main approaches.
 The most direct passage to tropical mathematics is
via logarithms. But valuation theory has   richer algebraic
applications (for example providing a quick proof of Kapranov's
theorem), and much of tropical geometry is based on valuations on
Puiseux series. The structures listed above are compatible with
valuations, and in \S\ref{Puis} we see how valuations fit in with
this approach.

 In his overview, Litvinov \cite{L2}  describes tropicalization as a process of
dequantization. Thus, one is motivated to develop the algebraic
tools at the tropical level, in order to provide an intrinsic
theory to support tropical geometry and linear algebra. The main
mathematical structure of tropical geometry is the max-plus
algebra, which  is viewed algebraically as an ordered monoid.
Considerable recent activity \cite{CHWW,W} concerns geometry over
monoids, but the ordering provides extra structure which enables
us to draw on classical algebraic structure theory.

The max-plus algebra is fine for answering many combinatoric
questions, but it turns out that a more sophisticated structure is
needed to understand the algebraic structure connected with
valuations. Our overlying objective is to translate ordered
monoids into an algebraic theory
 supporting tropical linear algebra and geometry, using the following approaches:

 \begin{itemize}
 \item Algebraic geometry as espoused by Zariski and
 Grothendieck,
 using varieties and commutative algebra in the context of
 category theory. \pSkip

  \item Linear algebra via tropical dependence, the characteristic polynomial, and
 (generalized)
 eigenspaces. \pSkip

   \item  Algebraic formulations for more sophisticated concepts such
as resultants, discriminants, and Jacobians.
\end{itemize}

This approach leads to the use of polynomials and matrices, which
requires two operations.
 Our task has been to pinpoint the appropriate
category of semirings in which to work, or equivalently, how far
do we dequantize in the process of tropicalization? In this survey
we compare four structures, listed in increasing level of
refinement:

 \begin{itemize}
 \item The max-plus algebra,
\pSkip
 \item Supertropical algebra,
\pSkip\item Layered tropical algebras,  \pSkip  \item Exploded
supertropical algebras.
\end{itemize}

We review the layered algebra in \S\ref{laystr}, compare it to the
max-plus algebra, and then in \S\ref{sec4} survey its linear
algebraic theory, especially in terms of different notions of
bases, proving a   new result (Proposition~\ref{semilattice})
about the   semi-additivity of the rank of d-independent sets  of
a layered vector space. In
 \S\ref{iden} we see how these considerations lead naturally to a theory of
 identities. Due to lack of space, we often refer the reader to
\cite{IzhakianRowen2011Layered,IzhakianKnebuschRowen2011CategoriesI}
for more details.

\section{Algebraic Background}\label{back}

We start by reviewing some notions which may be familiar, but are
needed extensively in our exposition. The basic tropicalization,
or dequantization, involves taking
 logarithms to $\Rpl$, which as explained in \cite{L1}
replaces conventional multiplication by addition, and conventional
addition by the maximum. This is called the \textbf{max-plus
algebra} of $\Rpl$.

\subsection{Ordered groups and monoids}

Recall that a \textbf{monoid} $(\tM,\cdot \; , 1)$ is a set with
an associative operation~$\cdot$ and a unit element $1$. We
usually work with \textbf{Abelian} monoids, in which the operation
is commutative. The passage to the max-plus algebra in tropical
mathematics can be viewed algebraically via ordered groups (such
as $\Rpl$), and, more generally, ordered monoids.

An Abelian monoid $\tM:= (\tM,\cdot \; , 1)$ is
\textbf{cancellative} if $a b = a c$ implies $b=c.$ There is a
well-known localization procedure with respect to a  submonoid $S$
of a cancellative Abelian monoid $\tM$, obtained by taking $\tM
\times S/\sim \, ,$ where $\sim$ is the equivalence relation given
by $(a,s) \sim (a',s')$ iff $as' = a's.$ Localizing with respect
to all of $\tM$ yields its \textbf{group of fractions},
cf.~\cite{B,W}. We say that a monoid $\tM$ is
\textbf{power-cancellative} (called \textbf{torsion-free} by
\cite{W}) if $a^n = b^n $ for some $n\in \Net$ implies $a=b.$ A
monoid $\tM$ is called $\Net$-\textbf{divisible} (also called
\textbf{radicalizible}
 in the tropical literature) if for
each $a\in \tM$ and $m \in \Net$ there is $b\in \tM$ such that $b^m =
a.$ For example, $(\Q, + \,)$ is $\Net$-divisible.

\begin{rem}\label{divclo} The customary way of embedding an Abelian  monoid $\tM$ into
an $\Net$-divisible monoid, is to adjoin $\root m \of a$ for each
$a \in \tM$ and $m \in \Net,$ and define
$$\root m \of a \root n \of b \ds {:=} \root {mn} \of {a^n b^m}.$$
This will be {power-cancellative} if $\tM$ is
{power-cancellative}.
\end{rem}

An \textbf{ordered} Abelian monoid is an Abelian monoid endowed
with a total order satisfying the property:
\begin{equation}\label{ogr} a \le b \quad \text{implies}\quad ga
\le gb ,\end{equation} for all elements $a,b,g$. Any ordered
cancellative Abelian monoid is infinite. 

One advantage of working with ordered monoids and groups is that
their elementary theory is well-known to model theorists. The
theory of ordered $\Net$-divisible   Abelian groups is model
complete, cf.~\cite[p.~116]{M} and~\cite[pp.~35, 36]{Sa}, which
essentially means that every $\Net$-divisible ordered cancellative
Abelian monoid has the same algebraic theory as the max-plus
algebra $(\Q, + \, )$, which is a much simpler structure than
$\Rpl$. From this point of view, the algebraic essence of tropical
mathematics boils down to $\Qpl$. Sometimes we want to study its
ordered submonoid $\Zpl$, or even $\Npl$, although they are not
$\Net$-divisible.

Nevertheless, just as one often wants to study the arithmetic of
$\Q$ by viewing finite homomorphic images of $\Z$, we want the
option of studying finite homomorphic images of the ordered monoid
$\Npl$. Towards this end, we define the $q$-\textbf{truncated
monoid} $\tM=[1,q]:=\{1,2,\dots,q\}$, given with  the obvious
ordering; the sum and product of two elements $k,\ell\in L$ are
taken as usual, if   not exceeding $q-1,$ and is $q$ otherwise. In
other words, $q$ could be considered as the infinite element of
the finite  monoid $\tM$.

\subsection{Semirings without zero}

So far, dequantization has enabled us to pass from algebras to
ordered Abelian monoids, which come equipped with a rich model
theory ready to implement, and as noted above, there is a growing
theory of algebraic geometry over monoids \cite{CHWW}. But to
utilize standard tools such as polynomials and matrices, we need
two operations (addition and multiplication), and   return to the
 language of semirings,  using~\cite{golan92} as a general
 reference. We write $^\dagger$ to indicate that we do not require
 the zero element.

 A
\textbf{\semiring0} $(R,+,\cdot,1)$ is a set~$R$ equipped with
 binary operations $+$ and~$\cdot \;$ such that:
\begin{itemize}
    \item $(R, + )$ is an Abelian semigroup; \pSkip
    \item $(R, \cdot \ , \rone )$ is a monoid with identity element
    $\rone$; \pSkip
    \item Multiplication distributes over addition.
\end{itemize}


A \textbf{\semifield0} is a \semiring0 in which every element is
(multiplicatively) invertible. In particular, the max-plus
algebras $\Zpl$, $\Qpl$, and $\Rpl$ are \semifields0, since $+$
now is the multiplication.

 A \textbf{semiring} is a \semiring0
with a zero element  $\rzero $ satisfying
  $$a + \rzero = a, \quad  a \cdot \rzero
 =\rzero = \rzero \cdot a, \quad  \forall a\in R.$$

 We use \semirings0 instead of semirings since
  the zero element can be adjoined formally, and often is
  irrelevant. For example, the zero element of the max-plus algebra
  would be $-\infty$, which requires special attention.

A \textbf{semifield} is a \semifield0 with a zero element
adjoined. Note that under this definition the customary field $\Q$
with the usual operations is not a semifield,  since $\Q \setminus
\{ 0 \}$ is not closed under addition.

Any ordered Abelian monoid gives rise to a max-plus \semiring0,
where the operations are written~$\odot$ and~$\oplus$ and defined
by:
$$a \oplus b : = \max \{a,b\}; \qquad a \odot b : =  a + b.$$

Associativity and distributivity (of $\odot$ over $\oplus$) hold,
but NOT  negation, since $a \oplus b\neq -\infty$ unless $a = b =
-\infty$.
 Although the circle notation is standard in the
tropical literature, we find it difficult to read when dealing
with algebraic formulae. (Compare $x^4 + 7 x^3 + 4x +1$ with $$x
\odot x \odot x \odot x \oplus 7 \odot x \odot x \odot x \oplus 4
\odot x \oplus 1.)$$ Thus, when appealing to the abstract theory
of semirings we use the usual algebraic notation of $\cdot$ (often
suppressed) and $+$ respectively for multiplication and addition.

The max-plus algebra satisfies the property that $a+b \in \{ a,
b\}$; we call this property \textbf{bipotence}.  In particular,
the max-plus algebra, viewed as a \semiring0, is
\textbf{idempotent} in the sense that $a+a = a$ for all~$a$.
Although idempotence pervades the theory, it turns out that what
is really crucial for many applications is the following fact:

\begin{rem}\label{idpar} In any idempotent \semiring0, if $a+b +c= a$,
then $a+b = a.$ (Proof: $ a = a+b+c = (a+b +c) +b  = a+b$.)
\end{rem}

Let us call such a \semiring0 \textbf{proper}. Note that a proper
semiring cannot have additive inverses other than $\zero,$ since
if $ c+a = \zero$, then $a = a+ \zero = a  + c+ a,$ implying $a =
a+c = \zero.$

Any proper \semiring0 $R$ gives rise to a partial order, given by
$a\le b$ iff $a+c = b$ for some $c\in R$. This is a total order
when the \semiring0 $R$ is bipotent. Thus, the categories of
bipotent \semirings0 and ordered monoids are isomorphic, and each
language has its particular advantages.

 \subsection{The function \semiring0}

 \begin{defn}\label{var}   The \textbf{function \semiring0}   $\FunSR$ is the set of
functions from a set $S$ to a \semiring0~$R$.\end{defn}

 $\FunSR$ becomes a \semiring0 under
componentwise operations, and is proper when $R$ is proper.
Customarily one takes $S = R^{(n)},$ the Cartesian product of $n$
copies of $R$. This definition enables us to work with proper
subsets, but the geometric applications lie outside the scope of
the present paper.


\subsubsection{Polynomials and power series}

%
%

$\Lambda = \{ \la_1, \dots, \la _n \}$ always denotes a finite
 set of indeterminates commuting with the  \semiring0 $R$; often $n=1$ and we have a single
indeterminate $\la$. We have the polynomial \semiring0
$R[\Lambda]$. As in \cite{IzhakianRowen2007SuperTropical}, we view
polynomials in $R[\Lambda]$ as functions, but perhaps viewed over
some extension~$R' $ of~ $R$. More precisely, for any subset $\tSS
\subseteq R^{(n)},$ there is a natural \semiring0 homomorphism
\begin{equation}\label{eq:polymap1} \psi: R[\Lambda] \to \Fun
(\tSS, R),
\end{equation}
 obtained by viewing a polynomial as a function on $\tSS$.

 When $R$ is a \semifield0, the same
 analysis is applicable to Laurent polynomials $R[\Lambda,
 \Lambda^{-1}]$, since the homomorphism $\la_i \mapsto a_i$ then sends $\la_i^{-1} \mapsto a_i^{-1}$.
 Likewise, when $R$ is power-cancellative and divisible, we can also define the \textbf{\semiring0 of rational polynomials}
  $R[\Lambda]_{\rat}$, where the powers of the $\la_i$ are taken to
  be arbitrary rational numbers. These can all be
  viewed as elementary formulas in the appropriate languages, so
  the model theory alluded to earlier is applicable to the appropriate polynomials
  and their (tropical) roots in each case.

Other functions over the bipotent \semiring0 $R$ of an ordered
monoid $\tM$ can be defined in the same way. For example, if $\tM$
 is an ordered submonoid of $(\Real^+, \cdot),$ then we can define the formal exponential series
\begin{equation}\label{exp1} \exp(a) := \sum_k \frac{a^k}{k!}\end{equation}
since $a < m$ implies $\frac{a^{m+1}}{(m+1)!} < \frac{a^m}{m!}, $
and thus \eqref{exp1} becomes a finite sum. It follows at once
that $\exp(\la) :=  \sum \frac{\la^k}{k!}$ is defined in
$\Fun(R,R).$

\subsection{Puisuex series and valuations}\label{Puis}

Since logarithms often do not work well with algebraic structure ,
tropicalists have turned to the \textbf{algebra of Puiseux
series}, denoted $\mathbb K$, whose elements have the form $$ p(t)
= \sum_{\tau \in \mathbb Q_{\ge 0},\ c_\tau \in K} c_{\tau} t
^{\tau},$$ where the powers of $t$ are taken over well-ordered
subsets of $\mathbb Q$. Here $K$ is any algebraically closed field
of characteristic 0, customarily $\mathbb C$. Intuitively, we view
$t$ as a ``generic element.''
 In the literature, the powers~$\tau$ are
 often  taken in $\Real$ rather than $\Q$, but
 it is enough to work with $\Q$, for which
it much easier to compute the powers of $t$. 
The algebra~ $\mathbb
K$  is an algebraically closed field.

Now recall that a \textbf{valuation} from an integral domain $W$
to an ordered monoid $(\tG,+ \, )$
 is a multiplicative monoid homomorphism  $v: W\setminus \{ 0 \}\to \tG
$, i.e., with
$$v(ab) = v(a) + v(b),$$
and   satisfying the   property $v(a+b) \ge \min \{v(a),v(b) \}$
for all $a,b \in K.$ We formally put $v(0) = \infty.$
For example, the field of Puiseux series has the \emph{order
valuation} $v$ given by
\begin{equation*}\label{eq:valPowerSeries}
  v(p(t))\  :=
     \min \{\tau \in  \mathbb Q _{\ge 0} \ : \; c_{\tau} \neq 0 \}.
\end{equation*}
 As $t \to 0,$ the dominant term in $p(t)$ becomes $c_{v({p(t)})} t
^{  v(p(t))}.$

The following basic observation in valuation theory shows why
valuations are  relevant to the tropical theory.

\begin{rem}\label{basicrem} If $v(a) \ne v(b)$, then  $v(a+b) =\min \{v(a),v(b) \}.$
Inductively, if $v(a_1), \dots,  v(a_m)$ are distinct, then

 $$v\bigg(\sum_{i=1}^m a_i\bigg) = \operatorname{min}
\{v(a_i): 1 \le i \le m \}\in \tG.$$ Consequently, if     $\sum
a_i = 0,$ then at least two of the $v(a_i)$ are the same. These
considerations are taken much more deeply in \cite{BiG}.
\end{rem}

When $W$ is a field, the value monoid $\tG$ is a group.
 Much  information about a valuation $v :W \to \tG \cup \{ \infty \}$ can be
garnered from the target $v(W)$, but valuation theory provides some
extra structure:

\begin{itemize}
\item The \textbf{valuation ring}
 $O_v = \{ a \in W: v(a) \ge 0\},$
 \pSkip
 \item The \textbf{valuation ideal}
$P_v = \{ a \in W: v(a) > 0\},$ \pSkip

\item The \textbf{residue ring}
$\bar W = O_v/P_v$, a field if $W$ is a field. \end{itemize}

For example, the valuation ring of the order valuation on the field $\mathbb K$ of
Puiseux series is $\{ p(t)\in  \mathbb K :  c_{\tau} = 0$ for $
\tau < 0\},$ and the residue field is $K$.

 We replace $v$ by  $-v$
to switch minimum to maximum, and $\infty$ by $-\infty$. One can generalize the notion of
valuation to permit $W$ to be a \semiring0;   taking $W = \tM$, we
see that the identity map is a valuation, which  provides one of
our main examples.

\subsection{The standard supertropical \semiring0}

This construction, following
\cite{IzhakianRowen2007SuperTropical},   refines the max-plus
algebra and picks up the essence of the value monoid. From now on, in
the spirit of max-plus, we write the operation of an ordered
monoid~$\tM$ as multiplication.

We start with an Abelian monoid $\tM := (\tM, \cdot \, )$,  an
ordered group $\tG := (\tG, \cdot \,)$, and an onto monoid
homomorphism $v : \tM \to \tG.$
  We write $a^\nu$ for $v(a),$ for $a\in~\tM.$ Thus every element
  of $\tG$ is some $a^\nu$. We write $a \nucong b$
  if $a^\nu = b^\nu.$

Our two main examples:

\begin{itemize}
 \item $\tM = \tG$ is the ordered monoid of the max-plus algebra
 (the
original example in Izhakian's dissertation);
\pSkip
  \item $\tM$ is the
multiplicative group of a field $F$, and $v: F^\times \to \tG$ is
a valuation.  Note that we forget the original addition on the
field~$F!$
\end{itemize}

Our objective is to use the order on $\tG$ to study $\tM$.
Accordingly we want to define a structure on~$\tM \cup \tG$.

The \textbf{standard supertropical \semiring0 } $R$ is the
disjoint union $  \tM \cup \tG$, made into a monoid by starting
with the given multiplications on $\tM$ and $\tG$, and defining $
a \cdot b^\nu $ and $ a^\nu  \cdot b $ to be $ (ab)^\nu$ for $a,b
\in \tM$.
We extend~$v$ to the \textbf{ghost map} $\nu: R\to \tG$ by taking
$\nu|_\tM = v$ and  $\nu|_\tG$ to be the identity on $ \tG$. Thus,
$\nu$ is a monoid projection.

We make $R$ into a \semiring0 by defining $$a+b = \begin{cases} a &
\text{ for } a^\nu > b^\nu;\\b & \text{ for } a^\nu < b^\nu;\\a^\nu &
\text{ for } a^\nu = b^\nu.
\end{cases}$$

 $R$ is never
additively cancellative (except for $\tM = \{\one \}$).

 $\tM$ is called  the \textbf{tangible
submonoid} of $R$.    $ \tG$ is called the \textbf{ghost ideal}.

$R $ is called a \textbf{supertropical \domain0} when the monoid
$\tM$ is (multiplicatively) cancellative.

 Strictly speaking,  a  supertropical \domain0 will not be a \semifield0 since the ghost elements are not
 invertible. Accordingly, we define a \textbf{1-\semifield0} to be a
 supertropical \domain0 for which $\tM$ is a group.

Motivation: The ghost ideal $\tG$ is  to be treated much the same
way that one  treats the zero element in commutative algebra. Towards
this end, we write $$a \lmodg b \qquad \text{ if }\qquad a =b\quad
\text{or}\quad a = b + \text{ghost}.$$ (Accordingly, write $a
\lmodg \zero$ if $a$ is a ghost.) Note that for $a$ tangible, $a
\lmodg b$ iff $a=b.$ If needed, we could formally adjoin a zero
element in a separate component; then the ghost ideal is $\tGz :=
\tG \cup \{\zero \}$. We may  think of the ghost elements as
uncertainties in
 classical algebra
 arising from adding two Puiseux series whose lowest order terms have the same
 degree.

 $R$  is a cover of the  max-plus algebra of $\tG$,
  in which we  ``resolve'' tangible
 idempotence, in the sense that $a+a = a^\nu$ instead of $a+a =
 a.$

 This modification in the structure permits us to detect corner roots of tropical polynomials
 in terms of the algebraic structure, by means of ghosts.  Namely,
 we say that $\bfa \in R^{(n)}$ is a root of a polynomial $f \in
 R[\Lambda]$ when $f(\bfa) \in \tG.$ This concise formulation
 enables us to apply directly many standard mathematical concepts
 from algebra, algebraic geometry, category theory, and model theory, as described in
\cite{IzhakianKnebuschRowen2010Glimpse}--\cite{IzhakianKnebuschRowen2011CategoriesI}
and
\cite{IzhakianRowen2007SuperTropical}--\cite{IzhakianRowen2008Resultants}.

The standard supertropical
 semiring works well with linear algebra, as we shall see.

\subsection{Kapranov's Theorem and the exploded supertropical structure}

Given a polynomial $f(\Lm) = \sum _\bfi p_\bfi ( \la_1^{i_1}
\cdots \la_n^{i_n}) \in \mathbb K[\Lambda],$ where $\bfi = (i_1, \dots, i_n)$,  i.e., with each
$p_\bfi$ a Puiseux series, we define its \textbf{tropicalization}~
$\tilde f $ to be the tropical polynomial $\sum _\bfi v( p_\bfi )
\la_1^{i_1} \cdots \la_n^{i_n}.$ (In the tropical literature, this
is customarily written in the circle notation.) By
Remark~\ref{basicrem}, if $\bfa \in \mathbb K^{(n)}$ is a root of $f$ in the classical
sense, then $v(\bfa)$ is a tropical root of $\tilde f.$ Kapranov
showed, conversely, that any tropical root of $\tilde f$ has the
form $v(\bfa)$ for suitable $\bfa \in \mathbb K^{(n)},$ and
valuation theory can be   applied to give a rather quick proof of
this fact, although we are not aware of an explicit reference.
(See \cite[Proposition 12.58]{R} for an analogous proof of a
related valuation-theoretic result.)

To prove Kapranov's theorem, one needs more than just the lowest
powers of the Puiseux series appearing as coefficients of $f$, but
also their coefficients; i.e., we also must take into account the
residue field of the order valuation on Puiseux series. Thus, we
need to enrich the supertropical structure to include this extra
information. This idea was first utilized by Parker~\cite{Par} in
his ``exploded'' tropical mathematics. Likewise, Kapranov's
Theorem has been extended by Payne~\cite{Payne08,Payne09}, for
which we need the following more refined supertropical structure,
initiated by Sheiner~\cite{Erez2}:

\begin{defn}\label{expl} Given a valuation $v: W \to \tG,$ we define the \textbf{exploded} supertropical
algebra $R =W\times \tG,$ viewed naturally as a monoid. (Thus we
are mixing the ``usual'' world with the tropical world.)

We make $R$ into a \semiring0 by defining $$(c,a)+(d, b)=
\begin{cases} (c,a) & \text{ when } a\ > b;\\(d,b) & \text{ when } a
< b;\\(c+d,a)  & \text{ when } a = b.
\end{cases}$$
\end{defn}

Sheiner's theory parallels the standard supertropical theory,
where now the ghost elements are taken to be the 0-layer $\{ 0 \}
\times \tG.$


\section{The layered structure}\label{laystr}
 The standard
supertropical theory has several drawbacks. First, it
  fails to detect the multiplicity of a
  root  of a polynomial. For example we would want 3 to have multiplicity 5 as a tropical root of
  the tropical polynomial $(\la + 3)^5$; this is not indicated supertropically.
 Furthermore, serious difficulties are encountered when attempting
 to establish a useful intrinsic differential calculus on the supertropical structure.
 Also, some  basic supertropical
 verifications require ad hoc arguments.

These drawbacks are resolved by refining the ghost ideal into
different ``layers,'' following a construction of
\cite[Example~3.4]{WW} and  \cite[Proposition~5.1]{AGG}. Rather
than a single ghost layer, we take an indexing set $L$ which
itself is a partially ordered \semiring0; often $L = \Net$ under
classical addition and multiplication.

 Ordered \semirings0 can be
trickier than ordered groups, since, for example, $a
>b$ in $(\Real,\cdot \; )$ does not imply $-a > -b,$ but rather $-a < -b.$
To circumvent this issue, we require  all elements in the indexing
\semiring0 to be non-negative.

\begin{construction}[{
\cite[Construction~3.2]{IzhakianRowen2011Layered}}]\label{defn5}
Suppose we are given a cancellative
 ordered monoid  $\tG$, viewed as a  \semiring0\ as above. For any partially ordered
 \semiring0~$L$ we define the \semiring0
$R := \scrR(L,\tG)$ to be set-theoretically $L\times \tG$, where
we denote the ``layer'' $\{\ell\} \times \tG$ as $R_\ell$ and the
element $(\ell,a)$ as $\xl{a}{\ell}$; we define multiplication
componentwise, i.e., for $k,\ell\in L,$ $a,b\in\tG,$
\begin{equation}\label{13}   \xl{a}{k} \xl{b}{\ell} =
\xl{(ab)}{k\ell}, \end{equation} and addition via the rules:

\begin{equation}\label{14}
 \xl{a}{k} + \xl{b}{\ell}=\begin{cases}  \xl{a}{k}& \quad\text{if}\ a >
 b,\\ \xl{b}{\ell}& \quad\text{if}\ a <  b,\\
 \xl{a}{k+\ell}& \quad\text{if}\ a= b.\end{cases}\end{equation}
  The \textbf{sort map}
$s: R\to L$ is given by   $s(\xl{a}{k}) = k$.

\end{construction}
 $R $ is indeed a \semiring0. We identify $a\in \tG$ with
$ \xl{a}{1}\in R_1$.

In most applications the ``sorting'' \semiring0 $L$ is ordered,
and its smallest nonzero element  is $1$. In this case, the monoid
$\{ \xl{a}{\ell}: 0 < \ell \le 1 \}$ is called the
\textbf{tangible} part of $R$. The \textbf{ghosts} are
$\{\xl{a}{\ell}: \ell > 1\},$ and correspond to the ghosts in the
standard supertropical theory. The ghosts together with $R_0$
comprise an ideal. If there is a zero element it would be
$\xl{\zero}0$.

 One can view the various choices of the sorting \semiring0 $L$ as
different stages of degeneration of algebraic geometry, where the
crudest (for $L = \{ 1 \}$) is obtained by passing directly to the
familiar max-plus algebra. The   supertropical structure is
obtained when $L=\{1, \infty\}$, where $R_1$ and $R_\infty$ are
two copies of $\tG$, with $R_1$   the tangible submonoid of $R$
and $R_\infty$ being
  the  ghost  copy.
Other useful choices of $L$ include
  $\{1,2,\infty\}$ (to distinguish between simple roots and multiple roots) and
$\Net$, which enables us to work with the multiplicity of roots
and with derivatives, as seen below. In order to deal   with
tropical integration as anti-differentiation, one should consider
the sorting \semirings0 $\Q _{>0}$ and $\Real_{>0}$, but this is
outside our present scope.

By convention,  $\xl{\la }{\ell }$ denotes $\xl{\rone}{\ell}\lm.$
Thus, any monomial can be written in the form $ \xl{\al_\bfi
}{\ell } \lm_1^{i_1}\cdots \lm_n^{i_n}$ where $\bfi =
(i_1,\dots, i_n).$  
We say a polynomial $f$ is \textbf{tangible} if
 each of its coefficients is tangible.

Note that the customary decomposition $R = \bigoplus _{\ell \in L}
R_\ell$ in graded algebras has been strengthened to the partition
$R = \dot\bigcup _{\ell \in L} R_\ell$.
The ghost layers now  indicate the number of
monomials defining a corner root of a tangible
polynomial. Thus,  we can  measure multiplicity of   roots by
means of layers. For example,
$$(\la + 3)^5 = \xl{\la ^5 }{1} +  \xl{3 }{5}\la ^4+  \xl{6
}{10}\la ^3 +  \xl{9}{10}\la ^2 +  \xl{12 }{5}\la  + \xl{15}1,$$
and substituting $3$ for $\la$ gives $\xl{ 15 }{32} = \xl{ 3^5
}{2^5}.$

\subsection{Layered derivatives} Formal derivatives are not very
enlightening over the max-plus
 algebra. For example, if we take the polynomial $f = \la ^2 + 5\la +8$, which has
 corner roots $3$ and $5$, we have $f' = 2 \la + 5,$ having corner root 3, but
 the common corner root 3 of $f$ and $f'$ could hardly be considered a multiple root of $f$.
 This difficulty arises from the fact that $1+1 \ne 2$ in the
 max-plus algebra.
The layering  permits us to define a more useful
version of the derivative (where now $R$ contains a
zero element~$\rzero$):

\begin{defn}
The \textbf{layered derivative} $f'_{\lay}$ of $f$ on $R[\la ]$ is
given by:
\begin{equation}\label{diff2}
 \bigg(\sum_{j=0}^n \xl{\al _j}{\ell_j} \lm ^j\bigg)'_{\lay} := \sum_{j=1}^n  \xl{\al _j}{j \ell_j}  \lm
 ^{j-1} .\end{equation}
\end{defn}

 In particular, for $\a =\xl{\al }{1} \in R_1,$
 $$( \al  \lm^j)'_{\lay} := \xl{\al }{j} \lm^{j-1}
\quad (j\ge 2), \qquad (\al \lm)_{\lay}' := \al , \qquad \text{and
} \ \al '_{\lay} := \rzero.$$

 Thus, we have the familiar formulas:
\begin{enumerate}
 \item  $(f+g)'_{\lay} = f'_{\lay} + g'_{\lay}$; \pSkip

  \item $(fg)'_{\lay} = f'_{\lay} g + fg'_{\lay}$.
\end{enumerate}

This is far more informative in the layered setting (say for $L =
\Net$) than in the standard supertropical setting, in which $(\al
\lm^j)' $ is ghost for all $j\ge 2$.

\subsection{The tropical Laplace transform}

The classical technique of Laplace transforms has a
tropical analog which enables us to compare the various notions of
derivative. Suppose $L$ is infinite, say $L = \Net$ . Formally
permitting infinite vectors $(a_\ell)_{\ell\in L}$ permits us to
define a homomorphism $R[[\Lambda]]\to \scrR(L,R)$ given by
$$ \sum a_k \la ^k \mapsto \big(\xl{k!}k a_k\big).$$ (Strictly speaking,
we would want  the image to be $ (\xl{k!}{\frac{1}{ k }} a_k)
 ,$ but this would complicate the
notation and require us to take $L = \Q^+$.) For example,
$\exp_{\lay}(a)\mapsto (\xl{a_k}k)$ where each $a_k = a.$

Now we define $(\xl{a_\ell}{\ell})' = (\xl{a_\ell}{\ell-1}). $
Then $\exp _{\lay}' =\exp _{\lay}.$ This enables one  to handle
trigonometric functions in the layered theory.

\subsection{Layered \domains0  with symmetry, and
patchworking}\label{basicsym1}

Akian, Gaubert, and Guterman \cite[Definition~4.1]{AGG} introduced
an involutory operation on semirings, which they call a
\textbf{symmetry}, to unify the supertropical theory with
classical ring theory. One can put their symmetry in the context
of $\scrR (L,\tG)$.

\begin{defn}\label{negmap} A \textbf{negation map} on a \semiring0
$L$ is a function $\tau: L \to L$ satisfying the properties:

\boxtext{
\begin{itemize}
\item[N1.] $\tau (k\ell ) = \tau(k)\ell  = k\tau(\ell )$; \pSkip

\item[N2.] $\tau^2 (k) = k$; \pSkip

\item[N3.] $\tau (k+\ell ) = \tau(k) + \tau(\ell )$.
\end{itemize}}
\end{defn}

Suppose the \semiring0  $L$ has a negation  map $\tau$ of order
$\le 2$. We say that   $R: = \scrR (L,\tG)$ has a
 \textbf{symmetry}~$\sig$ when
$R$ is endowed with a map
$$\sig : R\to R$$ and a negation map $\tau$ on $L$, together with the extra axiom:

\boxtext{
\begin{enumerate}\eroman

\item[S1.]   $s( \sig(a)) = \tau(s(a)), \quad \forall a\in \R.$
\end{enumerate} }

\begin{exampl}\label{doubntrunc}  Suppose
$L$ is an ordered \semiring0. We mimic the well-known construction
of $\mathbb Z$ from $\Net.$ Define the \textbf{doubled \semiring0}
 $$D(L) = L_1 \times L_{-1},$$ the direct product of two
copies $L_1$ and $L_{-1}$ of $L$, where addition is defined
componentwise, but multiplication is given by
$$(k,\ell)\cdot (k',\ell') = (kk'+\ell\ell',k \ell'+\ell k').$$ In other words, $D(L)$ is multiplicatively
graded by $\{ \pm 1 \}.$

$D(L)$ is endowed with the product partial order, i.e.,
$(k',\ell') \ge ( k,\ell)$ when  $k'\ge k$ and $\ell' \ge \ell$.
\end{exampl}

Here is an example relating to ``patchworking,'' \cite{IMS}.

\begin{exampl}\label{doubntrunc1} Suppose $\tG$ is an ordered Abelian monoid,   viewed as a \semiring0\ as in
Construction~\ref{defn5}.  Define the \textbf{doubled layered
\domain0}
$$R = \scrR ({D(L)},\tG) = \{ ((k,\ell),a): (k, \ell) \ne
(0,0), \ a \in \tG\},$$  but with addition and multiplication
given by the following rules:

\begin{equation*}\label{141}
\begin{array}{rll}
((k,\ell),a)+ ((k',\ell'),b) & = & \begin{cases} ((k,\ell),a)& \quad\text{if}\ a> b,\\
((k',\ell'),b)& \quad\text{if}\ a< b,\\
((k+k',   \ell + \ell'),\,a)& \quad\text{if}\ a= b.
\end{cases}\\ \\
((k,\ell),a)\cdot  ((k',\ell'),b) & = & ((k k'+ \ell \ell',  k
\ell'+k'\ell),\,ab). \end{array}
\end{equation*}
\end{exampl}
%

\begin{rem} In  $R =
\scrR ({D(L)},\tG),$  the symmetry $\sigma: R \to R$ given by
$\sigma : ((k,\ell),a) \mapsto ( (\ell,k),a)$  is analogous to the
one described in \cite{AGG}, and behaves much like negation.
\end{rem}

For example, when $L = \{ 1, \infty\},$ we note that $D(L) = \{
(1,1), (1,\infty), (\infty,1), (\infty,\infty) \},$ which is
applicable to Viro's theory of patchworking, where the
``tangible'' part could be viewed as those elements of layer
$(1,1), (1,\infty),$ or $(\infty,1)$. Explicitly, comparing with
Viro's use of hyperfields in \cite[\S~3.5]{V}, we  identify these
three layers respectively with $0, 1,$ and $ -1$ in his
terminology, and the element $(\infty,\infty)$ with the set $\{0,
1, -1\}$.

\section{Matrices and linear algebra}\label{sec4}

As an application, the supertropical and layered structures
provide many of the analogs to
 the classical Hamilton-Cayley-Frobenius theory. $M_n(R)$ denotes the \semiring0 of $n
\times n$ matrices over a semiring $R$. (Note that the familiar
   matrix operations do not require negation.)

Although one of the more popular and most applicable aspects of
idempotent mathematics, idempotent matrix theory is handicapped by
the lack of an element $-1$ with  which to construct the
determinant. Many ingenious methods have been devised to
circumvent this difficulty, as surveyed in \cite{ABG};   also
cf.~\cite{AGG} and many interesting papers in this volume.
Unfortunately these give rise to many different notions of rank of
matrix, and often are difficult to understand. The layered (and
more specifically, supertropical) theories give a unified and
relatively straightforward notion of rank of a matrix, eigenvalue,
adjoint, etc.

 \subsection{The supertropical determinant}

 This discussion summarizes
\cite{IzhakianRowen2008Matrices}. We define the
\textbf{supertropical determinant} $|A|$ of a matrix $A =
(a_{i,j})$ to be the permanent:
\begin{equation}\label{det2}
\Det{(a_{i,j})} = \sum _{\pi \in S_n}a_{1,\pi (1)} \cdots a_{n,\pi
(n)}.\end{equation}

 Defining the \textbf{transpose matrix}
$(a_{i,j})^{\trn}$ to be $(a_{j,i}),$ we have
$$\Det{(a_{i,j})^{\trn} } = \Det{(a_{i,j})}.$$

 $\Det{ A } = \rzero$ iff ``enough'' entries are $\rzero$ to force each
summand in Formula \ref{det2} to be $\rzero$.  This property,
which in classical matrix theory provides a description of
singular subspaces, is too strong for our purposes. We now take
the natural supertropical version. Write $\tT$ for the tangible
elements of our supertropical semiring ~$R$, and $\tTz = \tT \cup
\{ \zero \}$.

\begin{defn}
 A matrix $A$ is \textbf{\regular } if $\Det{ A }\in \tT$;
$A$ is~ \textbf{singular} when $\Det{ A } \in \tGz$.
\end{defn}

The standard supertropical structure
 often is sufficient for matrices, since  it enables
 us to distinguish between nonsingular matrices (in which the
tropical $n \times n$ determinant is computed as the unique
maximal product of $n$ elements in one
   track) and singular matrices.

 The tropical determinant is  not
multiplicative, as seen by taking the nonsingular matrix $A = \(
\begin{matrix} 0 & 0 \\ 1 & 2
\end{matrix}\) $. Then $A^2 = \( \begin{matrix} 1 & 2 \\
3 & 4
\end{matrix}\) $ is singular   and $\Det{A^2}
=5^\nu \ne 2\cdot 2$. But  we do have:

\begin{thm}\label{thm:detOfProd} For any $n \times n$ matrices over a supertropical semiring $R$, we have
$$\Det{AB}  \lmodg \Det{A} \Det{B}.$$ In particular, $\Det{AB} = \Det{A}\, \Det{B}$ whenever $\Det{AB} $ is
tangible.
\end{thm}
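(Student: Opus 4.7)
\medskip

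\noindent\textbf{Proof plan.} The plan is to expand both sides combinatorially and then pair off the ``extra'' terms on the left hand side so that each paired sum becomes a ghost. Write $A = (a_{i,j})$, $B = (b_{i,j})$, and $(AB)_{i,j} = \sum_k a_{i,k}b_{k,j}$. Expanding the permanent in \eqref{det2} gives
\[
|AB| \;=\; \sum_{\pi\in S_n}\prod_{i=1}^n\sum_{k=1}^n a_{i,k}b_{k,\pi(i)}
\;=\; \sum_{\pi\in S_n}\;\sum_{k\colon [n]\to [n]}\prod_{i=1}^n a_{i,k(i)}b_{k(i),\pi(i)}.
\]
First I would split the inner sum according to whether $k$ is a bijection or not. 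For bijective $k=\sigma\in S_n$, setting $\tau=\pi\sigma^{-1}$ shows that $\sum_{\sigma,\pi}\prod_i a_{i,\sigma(i)}b_{\sigma(i),\pi(i)}=|A|\,|B|$, so the ``bijective'' piece recovers exactly $|A|\,|B|$.

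Next I would show that the remaining ``non-injective'' piece
\[
E \;:=\; \sum_{\pi\in S_n}\;\sum_{\substack{k\colon[n]\to[n]\\ k\ \text{not injective}}}\prod_{i=1}^n a_{i,k(i)}b_{k(i),\pi(i)}
\]
is a ghost. The pairing argument is the main point. For each non-injective $k$, fix (canonically, say lexicographically) the smallest pair $i<j$ with $k(i)=k(j)=:r$. The transposition $(ij)\in S_n$ acts on the set of $\pi$'s by $\pi\mapsto\pi\circ(ij)$, a free involution (no fixed points). For a paired $\{\pi,\pi\circ(ij)\}$ the two monomials read
\[
\Bigl(\prod_{\ell\ne i,j}a_{\ell,k(\ell)}b_{k(\ell),\pi(\ell)}\Bigr)\cdot a_{i,r}a_{j,r}\cdot b_{r,\pi(i)}b_{r,\pi(j)},
\]
and the analogous expression with $\pi(i),\pi(j)$ interchanged. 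Since multiplication in $R$ is commutative, these two monomials are \emph{equal}; hence in any supertropical semiring their sum equals $a+a = a^\nu$, a ghost. Summing over all such pairs (for this fixed $k$) shows that each non-injective $k$ contributes a ghost, and consequently $E$ is a ghost (the ghost ideal is closed under addition).

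Combining these, $|AB| = |A|\,|B| + E$ with $E$ ghost, which is exactly the statement $|AB|\lmodg |A|\,|B|$. For the ``in particular'' clause, I would invoke the dichotomy forced by the definition of $\lmodg$: if $|AB|$ is tangible but $|AB|=|A|\,|B|+g$ with $g$ a ghost, then either $|A|\,|B|$ dominates (in which case $|AB|=|A|\,|B|$) or $g$ dominates (making $|AB|$ a ghost, a contradiction) or $(|A|\,|B|)^\nu=g^\nu$ (again forcing $|AB|$ to be a ghost). The tangibility assumption leaves only $|AB|=|A|\,|B|$. The main obstacle is only the bookkeeping that the canonical choice of $(i,j)$ really yields a well-defined free $\mathbb Z/2$-action on the non-bijective terms; once that is set up, the ghost property of $R$ does the rest.
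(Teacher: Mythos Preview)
Your argument is correct. The survey paper itself states this theorem without proof, deferring to \cite{IzhakianRowen2008Matrices}; the combinatorial expansion you give---splitting the inner sum over selection functions $k$ into bijections (recovering $|A|\,|B|$) and non-injections, then pairing the latter via the transposition $(i\,j)$ determined by the first repeated value of $k$---is exactly the standard route taken there. Your handling of the ``in particular'' clause via the trichotomy for $a+b$ in a supertropical semiring is also the intended one.

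One cosmetic point: you describe the pairing as a ``free $\mathbb Z/2$-action on the non-bijective terms,'' but strictly speaking the involution $\pi\mapsto\pi\circ(i\,j)$ is defined for each fixed non-injective $k$ separately (the chosen pair $(i,j)$ depends on $k$), so it is a disjoint union of free involutions, one per $k$. This is implicit in how you wrote the sum, but worth saying explicitly to forestall any worry about well-definedness when $k$ varies.
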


 We say a
permutation $\sig \in S_n$
 \textbf{attains}~$\Det{A}$ if $\Det{A}\nucong a_{\sig (1),1} \cdots a_{\sig
(n),n}.$

\begin{itemize}
\item By definition, some permutation always attains $\Det{A}$.
\pSkip \item If  there is a unique permutation $\sig$ which
attains $\Det{A}$, then  $\Det{A}= a_{1,\sig(1)} \;  \cdots \;
a_{n,\sig(n)}$.  \pSkip

\item If at least two permutations  attain $\Det{A}$, then $A$
must be singular. Note in this case that if we replaced all
nonzero entries of $A$ by tangible entries of the same
$\nu$-value, then $A$ would still be singular. \pSkip
%
\end{itemize}

\subsection{Quasi-identities and the adjoint}

\begin{defn}  A
\textbf{quasi-identity} matrix $\um_\tG$ is a \regular ,
multiplicatively idempotent matrix equal to $\um + \zm_\tG$, where
$\zm_\tG$ is $\rzero$ on the diagonal, and whose off-diagonal
entries are ghosts or $\rzero$.


\end{defn}

$\Det{
I_\tG } = \rone$ by the \regular ity of $I_\tG$.
Also,
 for any matrix $A$ and any quasi-identity, $I_\tG,$ we have
$AI_\tG = A + A_\tG,$ where $A_\tG = A Z _\tG  \in M_n(\tGz)$.

%
%

%

There is another notion to help us out.
\begin{defn} The $(i,j)$-\textbf{minor} $A'_{i,j}$ of a matrix $A =(a_{i,j})$ is obtained by
deleting the $i$ row and $j$~column of~$A$. The \textbf{adjoint}
matrix $\adj{A}$ of $A$ is defined as the transpose of the matrix
$(a'_{i,j}),$ where $a'_{i,j}= \Det{A'_{i,j}}$.
\end{defn}
%

\begin{rem}\label{rmk:adj}  $ $
\begin{enumerate} \eroman
    \item

Suppose $A = (a_{i,j})$. An easy calculation using Formula
\Ref{det2} yields
\begin{equation}\label{det3}
\Det{A} = \sum_{j=1}^n a_{i,j} \, a'_{i,j}, \quad \forall
i.\end{equation} Consequently, $a_{i,j}  \, {a'}_{i,j} \le  _\nu
\Det{A} $ for each $i,j$.\pSkip

\pSkip \item  If we take $k\ne i,$ then replacing the $i$ row by
the $k$ row in $A$ yields a matrix with two identical rows;  thus,
its \tdet \ is a ghost, and we thereby obtain
\begin{equation}\label{det3.2}
 \sum_{j=1}^n a_{i,j} \, a'_{k,j} \in \tGz, \qquad \forall
k \ne i;\end{equation}
Likewise
\begin{equation*}\label{det3.21}
 \sum_{j=1}^n a_{j,i} \, a'_{j,k} \in \tGz, \qquad \forall
k \ne i.\end{equation*}\pSkip

\end{enumerate}
\end{rem}

 One easily checks that $\adj{B}\adj{A}  =
\adj{AB}$ for any $2\times 2$ matrices $A$ and $B$. However,  this
fails for larger $n$,
cf.~\cite[Example~4.7]{IzhakianRowen2008Matrices}. We do  have the
following fact, which illustrates the subtleties of the
supertropical structure,
cf.~\cite[Proposition~5.6]{IzhakianRowen2008Matrices}:

\begin{prop}  $\adj{AB} =\adj{B}\adj{A}+\ghost.$ \end{prop}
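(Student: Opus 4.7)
The plan is to establish the stated identity entrywise. The $(i,j)$-entry of $\adj{AB}$ is $\Det{(AB)'_{j,i}}$, the permanent of the $(n-1)\times(n-1)$ matrix obtained from $AB$ by deleting row $j$ and column $i$. A direct check shows that this minor factors as $(AB)'_{j,i} = \hat A_j \, \hat B^i$, where $\hat A_j$ is the $(n-1)\times n$ matrix obtained from $A$ by deleting row $j$ and $\hat B^i$ is the $n\times(n-1)$ matrix obtained from $B$ by deleting column $i$. Meanwhile, unwinding the matrix product and the definition of the adjoint yields $(\adj{B}\adj{A})_{i,j} = \sum_k \Det{B'_{k,i}} \, \Det{A'_{j,k}}$.

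The proposition therefore reduces to a supertropical Cauchy--Binet relation modulo ghost: for any $(n-1)\times n$ matrix $P$ and $n\times(n-1)$ matrix $Q$,
$$\Det{PQ} \;\lmodg\; \sum_{k=1}^{n} \Det{P_{*,\widehat k}} \, \Det{Q_{\widehat k,*}},$$
where $P_{*,\widehat k}$ is $P$ with column $k$ deleted and $Q_{\widehat k,*}$ is $Q$ with row $k$ deleted. Applied to $P = \hat A_j$ and $Q = \hat B^i$, and noting that $(\hat A_j)_{*,\widehat k} = A'_{j,k}$ while $(\hat B^i)_{\widehat k,*} = B'_{k,i}$, this yields $(\adj{AB})_{i,j} \lmodg (\adj{B}\adj{A})_{i,j}$ for every $i,j$, whence the proposition.

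To prove this Cauchy--Binet relation, I would expand $\Det{PQ} = \sum_{\pi \in S_{n-1}} \prod_r (PQ)_{r,\pi(r)}$, substitute $(PQ)_{r,\pi(r)} = \sum_k P_{r,k} Q_{k,\pi(r)}$, and interchange sums to regroup by the tuple $\vec k = (k_1, \dots, k_{n-1})$; this gives $\sum_{\vec k} \bigl(\prod_r P_{r,k_r}\bigr) \Det{Q_{\vec k}}$, where $Q_{\vec k}$ is the matrix whose $r$-th row is $Q_{k_r,*}$. For injective $\vec k$ with common image $\widehat{k^*}$, summing over all $(n-1)!$ orderings and using the invariance of the permanent under row permutations reassembles the contribution as precisely $\Det{P_{*,\widehat{k^*}}}\Det{Q_{\widehat{k^*},*}}$. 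For non-injective $\vec k$, the matrix $Q_{\vec k}$ has two identical rows, so $\Det{Q_{\vec k}}$ is a ghost by Remark~\ref{rmk:adj}(ii); hence each such term is ghost, and ghosts accumulate (rather than cancel) in a proper \semiring0. Combining, the injective tuples reproduce $\sum_k \Det{P_{*,\widehat k}}\Det{Q_{\widehat k,*}}$ while the non-injective tuples contribute a ghost, as required.

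The main obstacle is the combinatorial bookkeeping in this expansion: one must verify that for each $(n-1)$-subset $\widehat{k^*}$ the $(n-1)!$ injective tuples with that image reassemble cleanly into the single product $\Det{P_{*,\widehat{k^*}}}\Det{Q_{\widehat{k^*},*}}$ with no leftover cross-terms, and separately confirm that \emph{every} non-injective tuple individually produces a ghost (rather than relying on some inter-tuple cancellation that the semiring does not support). Both points rest on the full symmetry of the permanent under arbitrary permutation of rows and columns---a feature absent in the classical signed determinant, which ironically makes the supertropical Cauchy--Binet cleaner than its classical counterpart.
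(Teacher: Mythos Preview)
The paper itself does not prove this proposition; it is a survey, and the result is simply quoted from \cite[Proposition~5.6]{IzhakianRowen2008Matrices}. So there is no in-paper proof to compare against.

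Your argument is correct. The reduction to an entrywise supertropical Cauchy--Binet identity is clean: the identification $(AB)'_{j,i} = \hat A_j\,\hat B^i$ and the unwinding of $(\adj{B}\adj{A})_{i,j}$ are both right, and your expansion of $\Det{PQ}$ by index tuples $\vec k$ is the standard one. The two points you flag as ``obstacles'' both go through: for injective $\vec k$ with image $\{1,\dots,n\}\setminus\{k^*\}$, the permanent's invariance under row permutation gives $\Det{Q_{\vec k}} = \Det{Q_{\widehat{k^*},*}}$ independently of the ordering, and then summing $\prod_r P_{r,k_r}$ over the $(n-1)!$ orderings of that image is literally the definition of $\Det{P_{*,\widehat{k^*}}}$, so no cross-terms are left over. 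For non-injective $\vec k$, a repeated row forces $\Det{Q_{\vec k}}$ ghost term-by-term (each summand is matched by the transposition swapping the repeated indices), so no inter-tuple cancellation is needed. Since ghosts form an ideal, the non-injective part is a single ghost, and the decomposition $\Det{PQ} = (\text{injective part}) + (\text{ghost})$ gives exactly $\Det{PQ} \lmodg \sum_k \Det{P_{*,\widehat k}}\Det{Q_{\widehat k,*}}$.

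For context, the surrounding results in the survey (Theorems~\ref{adjeq} and \ref{doublead}) are proved in the cited source via the string/digraph formalism together with Hall's Marriage Theorem; your Cauchy--Binet packaging is a more direct route for this particular statement and avoids that machinery entirely.
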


\begin{defn}\label{quas} For $\Det{ A}$  invertible, define $$I_A = A \frac{\adj{A}}{\Det{A}},\qquad  I'_A = \frac{\adj{A}}{\Det{A}}A.$$
\end{defn}

The matrices $\um _ A$ and $\um' _ A$   are quasi-identities, as
seen in \cite[Theorem~4.13]{IzhakianRowen2008Matrices}. The main
technique of proof is to define a \textbf{string}  (from the
matrix $A$) to be a product $a_{i_1, j_1}\cdots a_{i_k, j_k}$ of
entries from $A$ and, given such a string, to define its
\textbf{digraph} to be the graph whose edges are  $(i_1, j_1),
\dots, (i_k, j_k),$ counting multiplicities. A
$k$-\textbf{multicycle}  in a digraph is the union of disjoint
simple cycles, the sum of whose lengths is $k$; thus every vertex
in an $n$-multicycle appears exactly once. A careful examination
of the digraph in conjunction with Hall's Marriage Theorem yields
the following major results from \cite[Theorem~4.9
and~Theorem~4.12]{IzhakianRowen2008Matrices}:

\begin{thm}\label{adjeq} $ $ \eroman
\begin{enumerate}
    \item $\Det{ A \adj{A}}  = \Det{A} ^n.$ \pSkip
    \item $\Det{ \adj{A}}  = \Det{A}^{n-1}.$
\end{enumerate}
\end{thm}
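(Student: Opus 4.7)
For (i), the plan is to fully expand $\Det{A\adj{A}}$ by a double Leibniz expansion,
\[
\Det{A\adj{A}} \;=\; \sum_{\sigma\in S_n}\prod_{i=1}^n \sum_{j_i=1}^n a_{i,j_i}\,\Det{A'_{\sigma(i),j_i}},
\]
and then expand each minor $\Det{A'_{\sigma(i),j_i}}$ as its own sum over bijections $\pi^{(i)}\colon[n]\setminus\{\sigma(i)\}\to[n]\setminus\{j_i\}$. Each fully distributed summand becomes a product of exactly $n^2$ entries of $A$, and a row/column count shows that the associated edges form an $n$-regular bipartite multigraph on $[n]\times[n]$: row $l$ is picked up once from the $a_{i,j_i}$-factor with $i=l$ and $n-1$ times from the $\pi^{(i)}$-factors with $\sigma(i)\neq l$, and symmetrically for columns. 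By K\"onig's edge-coloring theorem (iterated Hall's marriage theorem), such a multigraph decomposes into $n$ perfect matchings, each a permutation $\tau\in S_n$. Since $\prod_l a_{l,\tau(l)}$ has $\nu$-value at most $\Det{A}$, every fully expanded summand has $\nu$-value at most $\Det{A}^n$.

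For the matching lower bound, the outer identity permutation $\sigma=\id$ contributes $\prod_i(A\adj{A})_{i,i}=\prod_i\Det{A}=\Det{A}^n$ exactly, by Remark~\ref{rmk:adj}(i). To lift this from a $\nu$-value equality to a semiring equality I would argue: in the nonsingular case, where the maximizing permutation $\sigma^*$ for $\Det{A}$ is unique, any non-identity $\sigma$ whose summand reaches $\nu$-value $\Det{A}^n$ would have an $n$-matching decomposition consisting entirely of copies of $\sigma^*$, which forces $\pi^{(i)}=\sigma^*|_{[n]\setminus\{\sigma(i)\}}$ to land in $[n]\setminus\{\sigma^*(i)\}$, i.e.\ $\sigma(i)=i$ for every $i$, a contradiction; hence every non-identity contribution is strictly smaller in $\nu$-value, leaving only the tangible identity term $\Det{A}^n$. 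In the singular case both sides are ghosts of $\nu$-value $\Det{A}^n$ and therefore coincide.

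For (ii), the quickest route combines (i) with Theorem~\ref{thm:detOfProd}: $\Det{A\adj{A}}\lmodg\Det{A}\,\Det{\adj{A}}$, so by (i), $\Det{A}^n\lmodg\Det{A}\,\Det{\adj{A}}$. In the nonsingular case $\Det{A}^n$ is tangible, so the tangibility clause of Theorem~\ref{thm:detOfProd} upgrades $\lmodg$ to equality, and cancellativity of the tangible monoid yields $\Det{\adj{A}}=\Det{A}^{n-1}$. The singular case is handled by a direct expansion of $\Det{\adj{A}}=\sum_\tau\prod_i\Det{A'_{\tau(i),i}}$ as a sum of products of $n(n-1)$ entries of $A$; the fully distributed summands are now $(n-1)$-regular bipartite multigraphs, so K\"onig/Hall bounds every summand by $\Det{A}^{n-1}$, while the choice $\tau=(\sigma^*)^{-1}$ together with $\pi^{(i)}=\sigma^*|_{[n]\setminus\{(\sigma^*)^{-1}(i)\}}$ attains the bound (each factor $a_{l,\sigma^*(l)}$ appears precisely $n-1$ times across $i$).

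The chief obstacle I anticipate is the passage from $\nu$-value inequalities to semiring equalities, i.e.\ the careful bookkeeping of ghost layers. The Hall/K\"onig decomposition is routine, but ruling out a non-identity permutation from contributing at the top $\nu$-value with a spurious ghost that would inflate the layer requires the uniqueness-of-$\sigma^*$ argument in the nonsingular case, and its inverse analog for $(\sigma^*)^{-1}$ in part (ii); in the singular case the ghost-preservation of the sum makes the layer issue vacuous.
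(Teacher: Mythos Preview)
Your approach is essentially the paper's: the sketch preceding Theorem~\ref{adjeq} says the result follows from ``a careful examination of the digraph in conjunction with Hall's Marriage Theorem,'' and your bipartite-multigraph/K\"onig decomposition is exactly that examination, just phrased in bipartite rather than digraph language. The $\nu$-value bounds, the nonsingular case of (i) via uniqueness of $\sigma^*$, and the nonsingular case of (ii) via Theorem~\ref{thm:detOfProd} plus cancellation are all correct and match the intended strategy.

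There is, however, a genuine gap in the singular case of (ii). Your K\"onig argument gives $\Det{\adj{A}}^\nu=(\Det{A}^{n-1})^\nu$, but the sentence ``ghost-preservation of the sum makes the layer issue vacuous'' does not establish that $\Det{\adj{A}}$ is itself a ghost. In part~(i) this \emph{is} vacuous, because the $\sigma=\id$ summand is literally $\Det{A}^n$, a ghost that absorbs everything of equal or smaller $\nu$-value. In part~(ii) no single summand of $\Det{\adj{A}}$ equals $\Det{A}^{n-1}$ on the nose, so you must argue separately. The fix is a short case split on the reason $\Det{A}$ is ghost: if the attaining permutation $\sigma^*$ is unique but its product has a ghost entry, then the unique top-value expanded term $\prod_l a_{l,\sigma^*(l)}^{\,n-1}$ already contains that ghost factor; if instead two distinct permutations $\sigma_1^*,\sigma_2^*$ attain $\Det{A}^\nu$, then the choices $\tau=(\sigma_1^*)^{-1}$ and $\tau=(\sigma_2^*)^{-1}$ (with the corresponding restricted $\pi^{(i)}$'s) give two distinct fully expanded terms at the top $\nu$-value, so their sum is ghost. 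Either way $\Det{\adj{A}}$ is ghost and the equality follows. This is precisely the ``careful examination'' the paper alludes to; without it the singular half of (ii) is unproved.
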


 In case $A$ is a \regular, we define
$$A^\nb  = \frac
{ \adj{A}}{\Det{A}}.$$
Thus $A A^\nb  =\um _ A$, and $A^\nb  A= \um '_ A$. Note that $\um
'_ A$ and $\um _ A$ may differ off the diagonal, although
$$\um _ A A = A  A^\nb  A = A \um' _ A.$$

This result is refined in
\cite[Theorem~2.18]{IzhakianRowen2009MatricesII}. One might hope
 that $A \adj{A} A = \Det{A} A,$  but this is false in
general! The difficulty is that one might not be able to extract
an $n$-multicycle from
\begin{equation}\label{vonNeum} a_{i,j}a'_{k,j}a_{k,\ell}.\end{equation} For example, when $n=3$,  the term
$a_{1,1}(a_{1,3}a_{3,2})a_{2,2}= a_{1,1}a'_{2,1}a_{2,2}$ does not
contain an $n$-multicycle. We do have the following positive
result from \cite[Theorem~4.18]{IzhakianRowen2009MatricesII}:

\begin{thm}\label{doublead}
 $   \adj{A} \adj{\adj{A}}\adj{  A} \nucong |A|^{n-1}\adj{ A}$ for any $n \times n$ matrix $A$.
\end{thm}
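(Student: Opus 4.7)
The plan is to use Theorem~\ref{adjeq}(ii), which gives $|\adj{A}| = |A|^{n-1}$, to reduce the claim to showing $B\adj{B}B \nucong |B|\,B$ where $B := \adj{A}$.  This is an identity that (as the text emphasizes) fails for a generic matrix in place of $B$, so the special structure of $\adj{A}$ must be used.  The lower bound $(B\adj{B}B)_{i,j} \ge_\nu |B|\,B_{i,j}$ is immediate: by Remark~\ref{rmk:adj}(i), $(B\adj{B})_{i,i} = \sum_k b_{i,k}\,b'_{i,k} = |B|$ exactly, so the $\ell = i$ summand of $\sum_\ell (B\adj{B})_{i,\ell} B_{\ell,j}$ already contributes $|B|\,B_{i,j}$.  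All the work is therefore in the matching upper bound.

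To establish the upper bound, carry out a combinatorial analysis of the permanent expansion. Each summand of $(B\adj{B}B)_{i,j}$ unfolds as a product of $(n-1) + (n-1)^2 + (n-1) = n^2 - 1$ entries of $A$ (since each entry of $B$ is itself an $(n-1)$-permanent of a minor of $A$), parametrized by a choice of indices $k,\ell$ and bijections $\sigma:[n]\setminus\{k\}\to[n]\setminus\{i\}$, $\tau:[n]\setminus\{j\}\to[n]\setminus\{\ell\}$, $\pi:[n]\setminus\{\ell\}\to[n]\setminus\{k\}$, and $\rho_m:[n]\setminus\{\pi(m)\}\to[n]\setminus\{m\}$ for $m\ne\ell$.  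A direct counting on the resulting digraph on $[n]$, with one edge $(r,c)$ per factor $a_{r,c}$, shows that every vertex has out-degree exactly $n$ except vertex $j$, which has out-degree $n-1$; symmetrically, every vertex has in-degree $n$ except vertex $i$, which has in-degree $n-1$.  Viewing this as a bipartite multigraph on $[n]\sqcup[n]$ (sources and targets) and adjoining a single auxiliary edge $(j,i)$ produces an $n$-regular bipartite multigraph.  By König's edge-coloring theorem (i.e.\ iterated Hall's Marriage Theorem), this decomposes into $n$ perfect matchings, exactly one of which contains $(j,i)$.  Deleting $(j,i)$ from that matching yields $n-1$ perfect matchings of $[n]$ together with a single $(n-1)$-matching from $[n]\setminus\{j\}$ to $[n]\setminus\{i\}$.

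Each such perfect matching corresponds to an $n$-multicycle in the string and contributes a product of $n$ entries of $A$ bounded by $|A|$ in $\nu$-value; the leftover $(n-1)$-matching is a single term in the permanent expansion of the minor $A'_{j,i}$ and so is bounded by $(\adj{A})_{i,j}$ in $\nu$-value.  Hence every summand of $(B\adj{B}B)_{i,j}$ is $\le_\nu |A|^{n-1}(\adj{A})_{i,j}$, which combined with the lower bound gives $\nucong$.  The main obstacle is the precise degree count in the second paragraph, which must verify that the auxiliary edge $(j,i)$ exactly regularizes the bipartite multigraph; this is the combinatorial mechanism (in the spirit of the arguments in \cite[Theorems~4.9 and~4.12]{IzhakianRowen2008Matrices}) that explains why, in contrast to the failure of $A\adj{A}A = |A|\,A$ (whose strings have only $2n-1$ edges, too few to force an $n$-multicycle), the triple $\adj{A}\,\adj{\adj{A}}\,\adj{A}$ carries enough edges to support $n-1$ full $n$-multicycles plus an adjoint-type matching.
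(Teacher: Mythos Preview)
The paper does not actually supply a proof of Theorem~\ref{doublead}; it merely cites \cite[Theorem~4.18]{IzhakianRowen2009MatricesII}.  Your argument is correct and is precisely in the spirit of the string/digraph/Hall's Marriage Theorem technique that the paper sketches just before Theorem~\ref{adjeq} and that underlies the proofs in \cite{IzhakianRowen2008Matrices,IzhakianRowen2009MatricesII}: the reduction via $|\adj{A}|=|A|^{n-1}$ to $B\adj{B}B \nucong |B|\,B$ for $B=\adj{A}$, the trivial lower bound from the diagonal of $B\adj{B}$, and the upper bound by exhibiting each length-$(n^2-1)$ string as $n-1$ $n$-multicycles plus one $(j,i)$-minor term via a K\"onig decomposition of the regularized bipartite multigraph.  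Your degree count (out-degree $n$ except $n-1$ at $j$, in-degree $n$ except $n-1$ at $i$) checks out line by line, so the single auxiliary edge $(j,i)$ indeed makes the graph $n$-regular, and the rest follows.
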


\subsection{The supertropical Hamilton-Cayley theorem}

\begin{defn}\label{esschar}
Define the \textbf{characteristic polynomial} $f_A$ of the matrix
$A$ to be
$$ f_A = \Det{\la I+ A},  $$
and the \textbf{tangible characteristic polynomial} to be a
tangible polynomial $\Inu{ f_A} = \la ^n + \sum _{i=1}^{n} \Inu{ \a}
_i \la ^{n-i} $,  where~$\hat \al_i$ are tangible and $\Inu{ \al}_i
\nucong \al_i$, such that $f_A = \la ^n + \sum _{i=1}^{n} \a _i
\la ^{n-i}$.
\end{defn}

 Under this notation, we see that  $\a _k \in R$ arises from the
 dominant
$k$-multicycles in the digraph  of $A$. We say that a matrix $A$
\textbf{satisfies} a polynomial $f\in R[\la]$ if $f(A) \in
M_n(\tGz).$

\begin{thm}\label{hamilton-Cayley}(\textbf{Supertropical Hamilton-Cayley}, \cite[Theorem~5.2]{IzhakianRowen2008Matrices}) Any matrix $A$ satisfies both its characteristic
polynomial~ $f_A$ and its tangible characteristic polynomial $\Inu{
f_A}$.\end{thm}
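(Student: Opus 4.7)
The plan is to adapt the classical Cayley-Hamilton derivation to the supertropical setting: start from the adjoint identity applied to $\la I + A$, extract coefficient relations, substitute back, and exploit the supertropical \textbf{doubling principle}---for any $X \in M_n(R)$, $X + X$ lies in $M_n(\tGz)$, since $x + x = x^\nu$ when $x$ is tangible and $x + x = x$ when $x$ is already ghost. Concretely, I would apply the polynomial analogue of Remark~\ref{rmk:adj} to $\la I + A \in M_n(R[\la])$, obtaining
$$\adj{\la I + A}\,(\la I + A) \lmodg \Det{\la I + A}\,I = f_A(\la)\,I$$
entrywise in $R[\la]$. Writing $\adj{\la I + A} = \sum_{k=0}^{n-1} C_k \la^k$ with $C_k \in M_n(R)$, expanding and comparing $\la^j$-coefficients yields
$$C_{j-1} + C_j A \lmodg f_j I \qquad (C_{-1} = C_n = 0),$$
where $f_j$ denotes the $\la^j$-coefficient of $f_A$.

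Multiplying the $j$-th identity on the right by $A^j$ and summing, then reindexing $\sum_j C_{j-1}A^j = \sum_k C_k A^{k+1}$, produces
$$2\sum_{k=0}^{n-1} C_k A^{k+1} \lmodg f_A(A) + (\ghost),$$
whose left side is in $M_n(\tGz)$ by the doubling principle. A residual subtlety arises in passing from ``$f_A(A) + (\ghost) \in M_n(\tGz)$'' to ``$f_A(A) \in M_n(\tGz)$,'' since in the supertropical setting $X + Y$ ghost together with $Y$ ghost does not automatically force $X$ to be ghost. This is resolved combinatorially: each $(i,i')$-entry of $f_A(A)$ is a sum over pairs (walk from $i$ to $i'$ of length $n - k$, $k$-multicycle in the digraph of $A$), and a Hall's Marriage argument of the type used in the proof of Theorem~\ref{adjeq} shows that every such pair achieving the maximal $\nu$-value admits a second, distinct decomposition of the same value, so any dominant tangible contribution is automatically doubled into a ghost.

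For the tangible characteristic polynomial $\Inu{f_A}$, the derivation is identical: substituting $\Inu{\a}_i$ for $\a_i$ preserves $\nu$-values, so the adjoint identity and the combinatorial duplication structure are untouched, and the doubling again forces $\Inu{f_A}(A) \in M_n(\tGz)$. The main obstacle I expect is the Hall-style matching argument producing the duplicate (walk, multicycle) decompositions---the polynomial-matrix version of Remark~\ref{rmk:adj} is a routine coefficient-wise check, but verifying the combinatorial pairing of maximal terms is the genuine difficulty and is the heart of \cite[Theorem~5.2]{IzhakianRowen2008Matrices}.
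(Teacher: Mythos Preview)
Your adjoint manipulation is carried out correctly, and you are right that the telescoping step survives: from $C_{j-1}+C_jA = f_jI + H_j$ (with $H_j$ ghost off the diagonal) one obtains the genuine equality $2\sum_k C_kA^{k+1} = f_A(A) + H$ with $H$ ghost, and the left side lies in $M_n(\tGz)$ by doubling. You also correctly locate the obstruction: in a supertropical semiring, ``$y+h$ ghost with $h$ ghost'' does \emph{not} force $y$ ghost when $h^\nu \ge y^\nu$, so the relation $f_A(A)+H\in M_n(\tGz)$ does not by itself yield $f_A(A)\in M_n(\tGz)$. The classical proof works precisely because $\sum_j(C_{j-1}A^j - C_jA^{j+1})$ telescopes to zero; here the lack of negation leaves the ghost residue $2X$, and there is no purely algebraic way to remove it.

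The structural problem is that the combinatorial fix you invoke---pairing every dominant (walk, multicycle) contribution with a second one of equal $\nu$-value via a Hall-type argument---is exactly the proof of \cite[Theorem~5.2]{IzhakianRowen2008Matrices}, and it is self-contained: it establishes $f_A(A)\in M_n(\tGz)$ directly, with no input from the adjoint identity. So the adjoint preamble is not a reduction but a detour; once you commit to the digraph matching, everything preceding it can be discarded. The paper's cited proof proceeds combinatorially from the outset, and your proposal ultimately reproduces it after an unnecessary excursion. If you want a genuinely algebraic route that bypasses the combinatorics, the transfer principle of \cite[Theorem~4.21]{AGG} (cf.\ Theorem~\ref{meta2} here) applied to the classical Cayley--Hamilton identity over $\mathbb Z$ is the natural candidate, though one must then reconcile the signed determinant with the supertropical permanent.
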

%

\subsection{Tropical dependence}

Now we apply supertropical matrix theory to vectors.
 As in
classical mathematics, one  defines a \textbf{module} (often
called \textbf{semi-module} in the literature) analogously to
module in classical algebra, noting again that negation does not
appear in the definition.  It is convenient to stipulate that the
module~$V$ has a zero element $\zero_V$, and then we need the
axiom:
$$ a \zero_V = \zero_V \ds{\text{ for all }} a \in R.$$

\noindent Also, if $\zero \in R$ then we require that $\zero v =
\zero_V$ for all $v\in V$.

In what follows, $F$ always denotes  a
 1-semifield. In this case, a module over $F$ is called a (supertropical) \textbf{vector
space}. The natural example is $F^{(n)},$ with componentwise
operations. As in the classical theory, there is the usual
familiar correspondence between the  semiring $M_n(F)$  and the
linear transformations of~$F^{(n)}$.

 For $v = (v_1, \dots, v_n), w = (w_1, \dots,
w_n) \in F^{(n)}$, we write $v \lmodg w$ when $v_i \lmodg w_i $
for all $1 \le i \le n.$

 Here is an application of the adjoint
matrix, used to solve equations.

\begin{rem} Suppose $A$ is  \regular, and $v \in F^{(n)}.$
Then the equation $Aw = v + \text{ghost}$ has the solution $w =
A^\nb v.$ Indeed, writing $I_A = I + Z_\tG$ for a ghost matrix
$Z_\tG$, we have $$Aw = AA^\nb v = I_A v = (I + Z_\tG)v \lmodg
v.$$\end{rem} This leads to
 the supertropical
analog of Cramer's rule
\cite[Theorem~3.5]{IzhakianRowen2009MatricesII}:

\begin{thm}\label{tangsol} If $A$ is a \regular\  matrix and $v$ is a tangible
vector, then the equation $A x \lmodg  v$ has a solution over $F$
which is the tangible vector having value $A^\nb v .$
\end{thm}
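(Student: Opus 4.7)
The plan is to exhibit the desired solution as the componentwise tangible companion of $w := A^\nb v$. By the remark immediately preceding the theorem, $w$ already satisfies $A w \lmodg v$, since $I_A = A A^\nb$ is a quasi-identity and hence $A w = I_A v = v + Z_\tG v \lmodg v$. The drawback is that individual coordinates $w_i$ may be ghost. However, since $F$ is a $1$-\semifield0, the tangible submonoid is a group and every element of $F$ has a unique tangible companion of the same $\nu$-value. Let $\hat w = (\hat w_1,\dots,\hat w_n)$, where each $\hat w_i$ is tangible with $\hat w_i \nucong w_i$; this is the candidate solution.

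The core task is to verify $A \hat w \lmodg v$ componentwise. Fix a row $j$ and compare
\[
(A w)_j \;=\; \sum_{k=1}^n a_{jk} w_k, \qquad (A\hat w)_j \;=\; \sum_{k=1}^n a_{jk} \hat w_k.
\]
Corresponding summands satisfy $a_{jk}\hat w_k \nucong a_{jk} w_k$, and since addition in a supertropical \semiring0 is governed by $\nu$-values, we get $(A\hat w)_j \nucong (A w)_j$. Now split on the sort of $(A w)_j$. If $(A w)_j$ is tangible, then $(A w)_j \lmodg v_j$ together with $v_j$ tangible forces $(A w)_j = v_j$; in this case the maximum $\nu$-value is attained uniquely by a tangible summand $a_{jk^*} w_{k^*}$, so both $a_{jk^*}$ and $w_{k^*}$ are tangible, giving $\hat w_{k^*} = w_{k^*}$ and consequently $(A\hat w)_j = v_j$. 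If instead $(A w)_j$ is ghost, then $(A w)_j = v_j + g$ with $g$ ghost, and using $A w = v + Z_\tG v$ together with the quasi-identity structure of $I_A$ one checks that $(A w)_j^\nu = v_j^\nu$; hence $(A\hat w)_j^\nu = v_j^\nu$, and whether or not ties among top summands persist after the tangibility shift, $(A\hat w)_j$ is either $v_j$ itself or the ghost $v_j^\nu$, both of which satisfy $\lmodg v_j$.

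The main obstacle is precisely the shift in sort that occurs when a ghost coordinate $w_k$ is replaced by its tangible companion $\hat w_k$: a previously ghost summand $a_{jk} w_k$ can turn into a tangible summand $a_{jk} \hat w_k$, so one must rule out the possibility that $(A\hat w)_j$ becomes a tangible element with $\nu$-value strictly exceeding $v_j^\nu$, which would violate $(A\hat w)_j \lmodg v_j$. This is controlled by the structure of $A^\nb$ and the quasi-identity $I_A$: since $I_A$ equals $I$ plus off-diagonal ghost perturbations, $Z_\tG v$ cannot strictly dominate $v$ in $\nu$-value, so the critical $\nu$-equality $(A w)_j^\nu = v_j^\nu$ in the ghost case is forced by Theorem~\ref{adjeq} and the structural properties of $\adj{A}$ recorded in Remark~\ref{rmk:adj}.
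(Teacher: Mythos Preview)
The paper does not prove this theorem; it merely cites \cite[Theorem~3.5]{IzhakianRowen2009MatricesII}. Your strategy of taking the tangible lift $\hat w$ of $w = A^\nb v$ and verifying $A\hat w \lmodg v$ coordinatewise is exactly the right framework, and your treatment of the case where $(Aw)_j$ is tangible is correct.

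There is, however, a genuine gap in the ghost case. Your claim that ``$(Aw)_j^\nu = v_j^\nu$'' (equivalently, that ``$Z_\tG v$ cannot strictly dominate $v$ in $\nu$-value'') is false. The off-diagonal entries of a quasi-identity need not be $\nu$-bounded by~$\one$; only certain \emph{products} along non-identity permutations are bounded. For a concrete instance, take $A = \left(\begin{smallmatrix} 0 & 2 \\ -1 & 0 \end{smallmatrix}\right)$ over the max-plus 1-\semifield0: then $|A| = 1$, $I_A = \left(\begin{smallmatrix} 0 & 1^\nu \\ (-2)^\nu & 0 \end{smallmatrix}\right)$, and for $v = (0,0)$ one gets $(I_A v)_1 = 1^\nu$, whose $\nu$-value strictly exceeds $v_1^\nu = 0$. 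So the intermediate assertion you invoke, and the appeal to Theorem~\ref{adjeq} and Remark~\ref{rmk:adj} to justify it, do not go through.

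What remains to be ruled out is precisely the sub-case you name at the end: a \emph{unique} dominant summand $a_{jk^*} w_{k^*}$ with $a_{jk^*}$ tangible, $w_{k^*}$ ghost, and $(a_{jk^*} w_{k^*})^\nu > v_j^\nu$, which after lifting would produce a tangible $(A\hat w)_j$ with $\nu$-value exceeding $v_j^\nu$. Excluding this requires unpacking why $w_{k^*} = \frac{1}{|A|}\sum_\ell a'_{\ell,k^*} v_\ell$ is ghost (a tie among the $a'_{\ell,k^*} v_\ell$, or a singular minor) and arguing via the digraph combinatorics of $A\,\adj{A}$ that some second index $k'$ must achieve the same dominant $\nu$-value in $\sum_k a_{jk}\hat w_k$, forcing $(A\hat w)_j$ to remain ghost. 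That is the content of the argument in \cite{IzhakianRowen2009MatricesII}; your final paragraph gestures at it but does not supply it.
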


 Our next task is to characterize singularity of a matrix $A$ in terms
of ``tropical dependence'' of its rows. In some ways the standard
supertropical theory works well with matrices, since we are
interested mainly
 in whether or not this matrix is nonsingular, i.e., if its
 determinant is tangible; at the outset,  at least, we are
 not concerned with the precise ghost layer of the determinant.

\begin{defn}\label{tropdep1}  A subset $W \subset
F^{(n)}$ is \textbf{tropically dependent} if there is a finite sum
$\sum \a_i w_i \in \tGz^{(n)}$, with each $\a_i \in \tTz$, but not
all of them $\rzero$; otherwise $W\subset F^{(n)}$ is called
\textbf{tropically independent}. A vector $v\in F^{(n)}$ is
\textbf{tropically dependent} on $W$ if $W \cup \{ v \}$ is
tropically dependent.
\end{defn}

By \cite[Proposition~4.5]{IKR2}, we have:

\begin{prop}\label{getbase}  Any $n+1$ vectors in $F^{(n)}$ are tropically
dependent. \end{prop}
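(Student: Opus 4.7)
The plan is to prove tropical dependence of $n+1$ vectors in $F^{(n)}$ by reducing to the supertropical Cramer's rule (Theorem~\ref{tangsol}).

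First, a preliminary reduction: if some $n$-element subset of $\{w_1,\dots,w_{n+1}\}$ is already tropically dependent, then the corresponding dependence relation extends trivially (by assigning the coefficient $\rzero$ to the remaining vector) to a tropical dependence of the full family. So I may assume that every $n$-subset is tropically independent; in particular $w_1,\dots,w_n$ are tropically independent. I will then invoke the earlier characterization (established for supertropical matrices in \cite{IzhakianRowen2008Matrices}) that $n$ row vectors in $F^{(n)}$ are tropically independent if and only if the $n\times n$ matrix $A$ they form is \regular. Thus $|A|\in\tT$.

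Next, apply Theorem~\ref{tangsol} to the tangible vector $w_{n+1}$: the equation $xA\lmodg w_{n+1}$ admits a tangible solution $x=(x_1,\dots,x_n)$, explicitly $x = w_{n+1} A^{\nabla}$, where $x$ is viewed as a row vector so that $xA=\sum_{i=1}^n x_i w_i$. Set $x_{n+1}:=\rone$, so the coefficients $x_1,\dots,x_{n+1}$ are all tangible, and $x_{n+1}\ne\rzero$.

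It remains to show that $\sigma := \sum_{i=1}^n x_i w_i + w_{n+1}$ lies componentwise in $\tGz$. Fix a component index $j$ and write $u=(xA)_j$ and $v=(w_{n+1})_j$; by construction $u\lmodg v$, so either $u=v$ or $u=v+g$ for some ghost $g$. A short case analysis on the $\nu$-values of $v$ and $g$ shows that in every case $u$ is either equal to $v$ or is a ghost with $u\ge_\nu v$. In the first case $u+v=v+v=v^\nu$, a ghost; in the second case $u+v=u$, a ghost. Hence $\sigma_j\in\tGz$ for each $j$, so $\sigma\in\tGz^{(n)}$, which is the required non-trivial tangible dependence.

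The main obstacle is the invocation of the row-tropical-independence criterion for nonsingularity; this is not proved in the excerpt and must be cited from prior work, but once it is in hand the argument is a clean application of Cramer's rule plus bipotence of the supertropical addition. A purely direct proof via expanding an $(n+1)\times(n+1)$ determinant with a repeated column (so that $\alpha_k=|A_k|$ for the $n\times n$ minors $A_k$) would give the componentwise ghost identity $\sum_k \alpha_k w_k\in\tGz^{(n)}$ automatically from the repeated-column version of Remark~\ref{rmk:adj}(ii), but would fail to guarantee that the coefficients $\alpha_k$ are \emph{tangible}, which is why the Cramer's rule route is preferable.
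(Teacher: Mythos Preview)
The paper does not actually prove Proposition~\ref{getbase}; it merely cites \cite[Proposition~4.5]{IKR2}. So there is no ``paper's own proof'' to compare against, and your argument must stand on its own.

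Your overall strategy---reduce to the case where $w_1,\dots,w_n$ are tropically independent, form the nonsingular matrix $A$, and use supertropical Cramer's rule to produce tangible coefficients $x_1,\dots,x_n$ with $xA + w_{n+1}\in\tGz^{(n)}$---is sound. The componentwise case analysis showing $u+v$ is ghost whenever $u\lmodg v$ is correct. Two remarks:

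\emph{A genuine (but easily repaired) gap.} You write ``apply Theorem~\ref{tangsol} to the tangible vector $w_{n+1}$,'' but nothing in the hypothesis guarantees $w_{n+1}$ is tangible; its components may lie in $\tGz$. Theorem~\ref{tangsol} as stated requires the right-hand side to be tangible. The fix is routine: choose a tangible lift $\hat w_{n+1}$ with $\hat w_{n+1}\nucong w_{n+1}$ componentwise, apply Cramer's rule to obtain tangible $x$ with $xA\lmodg \hat w_{n+1}$, and then rerun your case analysis to see that $(xA)_j + (w_{n+1})_j\in\tGz$ whether $(w_{n+1})_j$ is tangible or ghost (in the ghost case, adding anything $\nu$-equivalent or $\nu$-smaller still yields a ghost). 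You should state this explicitly.

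\emph{On the order of citations.} You invoke the equivalence ``rows tropically independent $\Leftrightarrow$ matrix nonsingular,'' which in this paper is Theorem~\ref{thm:base} and appears \emph{after} Proposition~\ref{getbase}. Since both results are imported from earlier papers (\cite{IzhakianRowen2008Matrices} and \cite{IKR2} respectively) rather than proved here, there is no circularity in the present exposition; still, it is worth noting that the proof of \cite[Theorem~6.5]{IzhakianRowen2008Matrices} does not rely on the $(n{+}1)$-vector dependence statement, so your appeal is logically safe.
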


\begin{thm}\label{thm:base}(\cite[Theorem
6.5]{IzhakianRowen2008Matrices})   Vectors $v_1, \dots, v_n \in
F^{(n)}$ are tropically dependent, iff   the matrix whose rows are
$v_1, \dots, v_n$ is singular.
\end{thm}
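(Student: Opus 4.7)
The argument rests on the two adjoint identities in Remark~\ref{rmk:adj}, which say $A\adj{A}$ and $\adj{A}A$ both have $|A|$ on the diagonal and ghost entries off-diagonal. These provide the structural bridge between the scalar $|A|$ and the linear algebra of the rows.

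For the $(\Leftarrow)$ direction I assume $|A|\in\tGz$; then every entry of $\adj{A}A$ is ghost. Any nonzero row $\beta=(a'_{1,i},\dots,a'_{n,i})$ of $\adj{A}$ thus yields $\beta A\lmodg\zero$, i.e.\ $\sum_j\beta_j v_j\lmodg\zero$. Such an $i$ exists when $A$ has at least one tangibly nonsingular $(n{-}1)\times(n{-}1)$ submatrix; the genuinely degenerate case (all $(n{-}1)$-minors ghost or zero) is handled by induction on $n$ or by noting that then some row of $A$ is itself ghost, which directly gives a dependency. To upgrade to tangible coefficients as required by Definition~\ref{tropdep1}, I use that $F$ is a $1$-semifield and replace each nonzero $\beta_j$ by a tangible $\hat\beta_j\nucong\beta_j$. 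The new combination $\sum_j\hat\beta_j v_j$ remains coordinatewise ghost: at each coordinate $k$ the $\nu$-values of the summands $\hat\beta_j v_{j,k}$ are unchanged, and ghost-ness of the sum is determined by those $\nu$-values together with the ghost/tangible status of the $v_{j,k}$, neither of which is affected by tangibilization of the coefficients.

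For $(\Rightarrow)$, given a relation $\sum_i\alpha_i v_i\lmodg\zero$ with tangible $\alpha_i$ not all zero, I first multiply each row $v_i$ by the corresponding $\alpha_i$. When all $\alpha_i$ are nonzero, this multiplies $|A|$ by the tangible factor $\prod_i\alpha_i$ and so preserves its ghost/tangible status; the problem reduces to showing $|A|\in\tGz$ under the hypothesis $\sum_i a_{i,j}\in\tGz$ for every column $j$. When some $\alpha_i=\zero$ the nonzero coefficients witness a dependency on a proper subset of rows, from which an induction on the size of the dependent subset, combined with the observation that any single ghost row of $A$ already forces $|A|\in\tGz$, dispatches the case. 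Under the reduced hypothesis, each column $j$ either has two row-entries tying at the maximum $\nu$-value or has a strict $\nu$-maximum that is already ghost. A bipartite marriage-theorem argument of exactly the kind driving Theorem~\ref{adjeq} then produces either two distinct $n$-multicycles of equal maximum weight contributing to $|A|$, or a single maximum-weight $n$-multicycle that contains a ghost entry; in either case $|A|\in\tGz$.

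The main obstacle is this combinatorial extraction in $(\Rightarrow)$: turning a column-wise ghost condition into either two maximum-weight matchings or a single maximum-weight matching with a ghost factor in the bipartite graph of $\nu$-dominating entries. A secondary obstacle is the tangibility step in $(\Leftarrow)$, for which the $1$-semifield hypothesis is essential and one must verify that ghost-ness of each coordinate sum is governed by $\nu$-values rather than by tangibility of the coefficients.
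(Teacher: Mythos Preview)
The paper itself does not prove this theorem; it is quoted from \cite[Theorem~6.5]{IzhakianRowen2008Matrices} as part of the survey, with no argument given here. So there is no in-paper proof to compare against, and I can only assess your outline on its own terms.

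Your $(\Rightarrow)$ sketch has the right shape: after rescaling rows by the tangible $\alpha_i$, the column-wise ghost hypothesis does reduce to a bipartite matching statement, and a Hall-type argument of the flavor used for Theorem~\ref{adjeq} is indeed how the original source proceeds.

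Your $(\Leftarrow)$ direction, however, has a genuine gap in the tangibilization step. You assert that ghost-ness of $\sum_j \hat\beta_j v_{j,k}$ is governed by the $\nu$-values of the summands together with the ghost/tangible status of the $v_{j,k}$, and is therefore unaffected by replacing ghost coefficients $\beta_j$ by tangible $\hat\beta_j\nucong\beta_j$. This is false. If in some column $k$ a single term $\beta_j v_{j,k}$ strictly $\nu$-dominates and is ghost \emph{only because $\beta_j$ is ghost} (with $v_{j,k}$ tangible), then after tangibilization that dominant term, and hence the whole sum, becomes tangible. Concretely, take
\[
A=\begin{pmatrix}1&0\\ 0&0^\nu\end{pmatrix},\qquad |A|=1^\nu,\qquad \adj{A}=\begin{pmatrix}0^\nu&0\\ 0&1\end{pmatrix}.
\]
The first row $(0^\nu,0)$ of $\adj{A}$ yields the ghost combination $(1^\nu,0^\nu)$, but its tangibilization $(0,0)$ yields $(1,0^\nu)$, whose first coordinate is tangible. (Row~2 of $\adj{A}$ happens to work here because it is already tangible, but in general no row of $\adj{A}$ need be entirely tangible.) Your fallback for the degenerate case is likewise unjustified: having all $(n{-}1)$-minors ghost or zero does not force some row of $A$ to be ghost, so the claimed shortcut does not dispatch that case; a genuine induction must be set up with a stronger inductive hypothesis.

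The proof in \cite{IzhakianRowen2008Matrices} handles $(\Leftarrow)$ by a more delicate combinatorial argument that tracks which minors are tangible and constructs the tangible dependency directly, rather than by naive tangibilization of an adjoint row.
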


\begin{cor}\label{coldep}   The matrix $A\in M_n(F)$
over a supertropical domain $F$  is \regular \ iff the rows of $A$
are tropically independent, iff the columns of $A$ are tropically
independent.
\end{cor}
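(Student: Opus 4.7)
The plan is to read off the corollary as two nearly immediate applications of \thmref{thm:base}, together with the fact that the supertropical determinant is invariant under transposition.

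First, for the row statement: by definition, a matrix $A \in M_n(F)$ is \regular\ iff $\Det{A} \in \tT$, i.e., iff $A$ is \emph{not} singular. \thmref{thm:base} asserts that vectors $v_1, \dots, v_n \in F^{(n)}$ are tropically dependent iff the matrix with rows $v_1, \dots, v_n$ is singular. Contrapositively, the rows of $A$ are tropically independent iff $A$ is nonsingular. So the first equivalence is a direct logical reformulation of \thmref{thm:base}; nothing new needs to be proved.

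For the column statement, the key observation is that the transpose $A^\trn$ satisfies $\Det{A^\trn} = \Det{A}$, as recorded earlier in the discussion of the supertropical determinant. Hence $A$ is \regular\ iff $A^\trn$ is \regular. The rows of $A^\trn$ are precisely the columns of $A$, so applying the already-established row equivalence to $A^\trn$ yields: $A^\trn$ is \regular\ iff the rows of $A^\trn$ are tropically independent, iff the columns of $A$ are tropically independent. Chaining this with the previous equivalence completes the proof.

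There is no real obstacle; the corollary is essentially a packaging of \thmref{thm:base} with the symmetry $\Det{A^\trn} = \Det{A}$. The only minor point to mention is that the definition of tropical dependence (\Ref{tropdep1}) is stated for subsets of $F^{(n)}$ without distinguishing rows from columns, so applying it to $A^\trn$ is legitimate and immediately produces the column-side statement.
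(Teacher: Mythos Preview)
Your proof is correct and follows exactly the same approach as the paper: apply \thmref{thm:base} to $A$ for the row statement, and use $\Det{A^\trn}=\Det{A}$ to reduce the column statement to the row statement for $A^\trn$. The paper's proof is a one-line version of what you wrote.
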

\begin{proof} Apply the theorem to $\Det{A}$ and  $\Det{A^t}$,
which are the same.
\end{proof}

There are two competing supertropical notions of base of a vector
space, that of a maximal independent set of vectors, and that of a
minimal spanning set, but this is unavoidable since, unlike the
classical theory, these two definitions need not coincide.

\subsection{Tropical bases and rank}\label{dbasea}

The customary definition of tropical base, which we call
\textbf{s-base} (for \textbf{spanning base}),  is a minimal
spanning set (when it exists). However, this definition is rather
restrictive, and a competing notion  provides a richer theory.

\begin{defn}  A \textbf{d-base} (for \textbf{dependence base}) of a vector space
$V$ is a maximal  set of tropically independent elements of $V$.
A \textbf{d,s-base} is a  d-base which is also an  s-base. The
\textbf{rank} of a set $\tB\subseteq V$, denoted $\rank(\tB)$, is
the maximal number of $d$-independent vectors of $\tB.$
 \end{defn}

Our d-base corresponds to the ``basis'' in
\cite[Definition~5.2.4]{MS}. In view of Proposition~\ref{getbase},
all d-bases of $\Fz ^{(n)}$ have precisely $n$ elements.

This leads us to the following definition.

\begin{defn} The \textbf{rank}  of a vector space
$V$ is defined as:
$$ \rank(V):= \max \big \{ \rank(\base) : \base \text{ is a d-base of } V \big \}.$$
\end{defn}

We have just seen that $\rank(\Fz  ^{(n)}) = n.$ Thus, if
$V\subset \Fz ^{(n)}$, then $\rank(V) \le n$.

We might have liked $\rank(V)$ to be independent of the choice of
d-base of $V$,
 for any vector space $V$. This is proved in the
classical theory of vector spaces by showing that dependence is
transitive. However, transitivity of dependence fails in the
supertropical theory, and, in fact, different d-bases may contain
different numbers of elements, even when tangible. An example is
given in \cite[Example~5.4.20]{MS}, and reproduced in
\cite[Example~4.9]{IKR2} as being a subspace of $F^{(4)}$ having
d-bases both of ranks 2 and 3.
%

\begin{examp}
The matrix $A = \left( \begin{matrix} 4 & 4 & 0 \\ 4 & 4 & 1 \\
4 & 4& 2
\end{matrix}\right) $ has rank 2, but   is ``ghost annihilated''   by the tropically independent
vectors $v_1 = (1, 1, 0)^\trn$ and $v_2 = (1, 1, 1)^\trn$; i.e.,
$Av_1 = Av_2 = (5^\nu,5^\nu,5^\nu)^\trn$, although $2+2>3$.
\end{examp}

We do have some consolations.

\begin{prop}[{\cite[Proposition~4.11]{IKR2}}]\label{thm:tangibleRank}  For any tropical subspace $V$ of
$\Fz  ^{(n)}$ and any tangible $v\in V,$ there is a tangible
d-base of $V$ containing $v$ whose rank is that of $V$.
\end{prop}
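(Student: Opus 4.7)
Write $r := \rank(V)$ and fix any d-base $B_0 = \{u_1, \dots, u_r\} \subseteq V$ attaining this maximum rank. The plan is to modify $B_0$ in two stages: first produce a tangible d-base $\hat B$ of $V$ of rank $r$; then swap one of its members for $v$ via a tropical exchange argument.

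\emph{Step 1 (tangibilization).} By \thmref{thm:base}, applied to a suitable $r\times r$ submatrix, there are coordinates $J = \{j_1,\dots,j_r\}$ for which the square matrix $U_J$ with rows $u_i|_J$ is \regular. Any $u_i$ can be written as an $F$-linear combination of fixed generators of $V$; a ghost entry in $u_i$ occurs precisely where two tangible contributions tie in $\nu$-value. I perturb the defining coefficients by tangible scalars close to $\one$, obtaining $\hat u_i\in V$ whose entries all lie in $\tTz$. Since $|\hat U_J|$ is a polynomial in the coefficients and its tangibility is an open condition, sufficiently mild perturbations also keep $\hat U_J$ \regular. Thus $\hat B = \{\hat u_1, \dots, \hat u_r\}$ is a tangible d-base of $V$ of rank $r$.

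\emph{Step 2 (exchange $v$ into $\hat B$).} Since $\hat B$ has rank $r = \rank(V)$, the set $\{v\}\cup\hat B$ is tropically dependent, yielding tangible $\alpha_0,\alpha_1,\dots,\alpha_r$, not all zero, with
$$\alpha_0 v + \sum_{i=1}^r \alpha_i\hat u_i \ \in\ \tGz^{(n)}.$$
Independence of $\hat B$ forces $\alpha_0\neq\zero$, and tangibility of $v$ forces some $\alpha_i\neq\zero$ with $i\ge 1$. Pick $i$ so that in the columns $J$ the term $\alpha_i\hat u_i|_J$ dominates in the cofactor expansion of $\hat U_J$ along row $i$. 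I then claim the substituted family $B_1 := \{v\}\cup(\hat B\setminus\{\hat u_i\})$ is tropically independent: restricting the displayed relation to the coordinates $J$ and applying Cramer's rule (\thmref{tangsol}) in the 1-semifield $F$, the replacement of row $\hat u_i|_J$ by $\alpha_0^{-1}v|_J$ preserves nonsingularity of the $r\times r$ submatrix on $J$, so by \thmref{thm:base} the rows remain tropically independent. Being a tangible independent set of size $r$, $B_1$ is a d-base of $V$ of maximum rank containing $v$.

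\emph{Main obstacle.} The exchange in Step 2 is the technical heart of the argument. Unlike in the classical theory, tropical dependence is not transitive (as the authors emphasize via Example 5.4.20 of \cite{MS}), so one cannot simply combine two ghost relations to run a standard Steinitz argument: rescaling a relation and adding it to the displayed one produces terms like $\alpha_0\beta_0 v + \alpha_0\beta_0 v = (\alpha_0\beta_0 v)^\nu$, and coefficients of the form $\beta_0 \alpha_j + \alpha_0 \beta_j$ can collapse to a ghost whenever the two summands tie in $\nu$-value, breaking the required tangibility. The determinantal route above circumvents this trap by reducing the exchange to the \regular ity of a single $r\times r$ submatrix, where the dominant-term structure of the supertropical determinant (recalled around \thmref{thm:detOfProd}) keeps the bookkeeping clean; selecting $i$ to match the dominant cofactor is what makes the replacement nonsingular. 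Step 1 is comparatively routine once one recognizes that both tangibility of entries and nonsingularity of $|\hat U_J|$ are generic conditions on the tangible perturbation coefficients.
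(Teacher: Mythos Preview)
The paper does not actually prove this proposition; it merely quotes it from \cite[Proposition~4.11]{IKR2}, so there is no in-paper argument to compare against. That said, your proposal has genuine gaps in both steps.

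\emph{Step 1.} The perturbation argument imports analytic language (``close to $\one$'', ``open condition'', ``sufficiently mild perturbations'') that has no meaning over an arbitrary 1-\semifield0 $F$: no topology or density hypothesis is available, and the proposition is not restricted to $F=\Rpl$. You also assume $V$ has a fixed finite set of generators so that each $u_i$ is a linear combination of them; nothing in the hypotheses guarantees an s-base or even finite generation of $V$. Finally, the claim that ``a ghost entry in $u_i$ occurs precisely where two tangible contributions tie'' is false in general: a ghost entry of $u_i$ can simply be inherited from a ghost entry of a generator, with no tie to break. So even granting a dense ground monoid, the mechanism you describe does not produce tangible $\hat u_i\in V$.

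\emph{Step 2.} The exchange you want is not the classical Steinitz step and needs a supertropical argument; you recognize this, but the fix offered is not one. The instruction ``pick $i$ so that $\alpha_i\hat u_i|_J$ dominates in the cofactor expansion of $\hat U_J$ along row $i$'' does not specify a well-defined choice (every row's cofactor expansion recovers the same $|\hat U_J|$), and the assertion that replacing row $i$ by $\alpha_0^{-1}v|_J$ ``preserves nonsingularity'' is exactly the point at issue, not a consequence of \thmref{tangsol}. Cramer's rule in the form of \thmref{tangsol} produces a vector $x$ with $Ax\lmodg v$; it says nothing about nonsingularity of a row-replaced matrix. Without an explicit tropical exchange lemma (for instance, a Laplace-type inequality showing that at least one $r\times r$ minor of the $(r{+}1)\times n$ stack $\{v,\hat u_1,\dots,\hat u_r\}$ omitting some $\hat u_i$ is tangible), the conclusion that $B_1$ is independent is unproved.
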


\begin{prop}[{\cite[Proposition~4.13]{IKR2}}]\label{sumdep1}
Any $n\times n$ matrix of rank $m$ has ghost annihilator of rank
$\ge n-m$.
\end{prop}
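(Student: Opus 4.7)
The plan is to exhibit $n-m$ tropically independent ghost annihilators of $A$ by extracting one from each ``redundant'' column of $A$, relative to a maximal tropically independent subset of columns.

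I first use $\rank(A)=m$ to select, after relabelling, columns $c_1,\ldots,c_m$ of $A$ that are tropically independent and such that for every $j\in\{m+1,\ldots,n\}$ the augmented set $\{c_1,\ldots,c_m,c_j\}$ is tropically dependent. Thus for each such $j$ there exist tangible or zero coefficients $\beta_1^{(j)},\ldots,\beta_m^{(j)},\beta_j^{(j)}\in\tTz$, not all $\rzero$, with
\[
\sum_{i=1}^{m}\beta_i^{(j)}c_i\;+\;\beta_j^{(j)}c_j\;\in\;\tGz^{(n)}.
\]
The critical observation is that $\beta_j^{(j)}\neq\rzero$: otherwise we would have a nontrivial tangible combination of $c_1,\ldots,c_m$ lying in $\tGz^{(n)}$, contradicting their tropical independence. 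So $\beta_j^{(j)}$ is a nonzero tangible element of $F$.

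Next, for each $j>m$ I would define $v^{(j)}\in F^{(n)}$ by setting its $i$-th coordinate equal to $\beta_i^{(j)}$ for $1\le i\le m$, its $j$-th coordinate equal to $\beta_j^{(j)}$, and all remaining coordinates equal to $\rzero$. Since $Av^{(j)}=\sum_k v^{(j)}_k c_k$ is exactly the displayed tangible combination of the columns, we obtain $Av^{(j)}\in\tGz^{(n)}$, so each $v^{(j)}$ is a ghost annihilator of $A$.

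The last and most delicate step is to check that $v^{(m+1)},\ldots,v^{(n)}$ are tropically independent. For any tangible combination $w=\sum_{j>m}\gamma_j v^{(j)}$ and any index $k>m$, the only $v^{(j)}$ contributing to the $k$-th coordinate of $w$ is $v^{(k)}$, since $v^{(j)}$ is supported in coordinates $1,\ldots,m$ and $j$ only. Hence $w_k=\gamma_k\beta_k^{(k)}$; because $\beta_k^{(k)}$ is a nonzero tangible element, this is tangible (hence not in $\tGz$) unless $\gamma_k=\rzero$. Forcing $w\in\tGz^{(n)}$ therefore forces every $\gamma_k=\rzero$, so the combination is trivial. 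This gives $n-m$ tropically independent ghost annihilators, as required.

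The main obstacle is the interpretation of ``rank of $A$''. The construction above extracts annihilators from a maximal independent subset of columns, so it directly bounds the ghost annihilator rank from below by $n$ minus the column rank. If the ``rank'' in the hypothesis is precisely the column rank (which I believe is the convention used here via Corollary~\ref{coldep} and Theorem~\ref{thm:base}), the argument is complete; if one interprets rank as row rank, the identical argument applied to $A^{\trn}$, together with the coincidence of these two ranks in the supertropical setting, yields the same conclusion. Apart from this bookkeeping, the core engine of the proof — that the coefficient on the ``new'' column in each extracted dependence is forced to be tangible nonzero, isolating distinct coordinates in which the candidate annihilators live — is entirely straightforward.
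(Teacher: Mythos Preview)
The paper does not actually prove this proposition; it merely quotes it from \cite[Proposition~4.13]{IKR2}. Your argument is correct and is the natural direct construction: from each column outside a maximal tropically independent set of columns, extract a tangible dependence relation, package its coefficients as a vector in $F^{(n)}$, and observe that the resulting $n-m$ vectors are ghost annihilators that are themselves tropically independent because each has a nonzero tangible entry in a coordinate where all the others vanish. Your handling of the row/column rank issue is also appropriate, since in the supertropical setting these coincide via the determinantal criterion underlying Theorem~\ref{thm:base} and Corollary~\ref{coldep}. This is almost certainly the same approach taken in the original source.
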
%

\subsubsection{Semi-additivity of rank}

\begin{defn} A  function  $\rankS : S \to \Net$ is \textbf{monotone}
if for all $S_2 \subseteq S_1   \subseteq S$ we have
  \begin{equation}\label{rankless}  \rankS ( S_2 \cup \{ s \}) - \rankS (S_2) \ge
\rankS (S_1 \cup \{ s \}) - \rankS (S_1)\end{equation} for all $s
\in S$.\end{defn} Note that \eqref{rankless} says that $\rankS
(S_1) - \rankS (S_2) \ge \rankS (S_1 \cup \{ s \})- \rankS ( S_2
\cup \{ s \}).$ Also, taking
$S_2 = \emptyset$    yields $ \rankS (S_1 \cup \{ s \}) - \rankS
(S_1) \le 1$.
\begin{lem} If $\rankS : S \to \Net$ is monotone, then   \begin{equation}\label{addone} \rankS(S_1) +
\rankS(S_2) \ge \rankS(S_1 \cup S_2) + \rankS (S_1 \cap
S_2)\end{equation} for all $S_1, S_2 \subset S.$ \end{lem}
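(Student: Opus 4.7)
The plan is to exploit the diminishing-marginal-returns form of monotonicity \eqref{rankless} by adding the elements of $S_1 \setminus S_2$ one at a time and telescoping. Rearranging the desired inequality \eqref{addone}, it suffices to show
\[
\rankS(S_1) - \rankS(S_1 \cap S_2) \;\ge\; \rankS(S_1 \cup S_2) - \rankS(S_2),
\]
which compares the total gain from adjoining the elements of $S_1 \setminus S_2$ to the smaller set $S_1 \cap S_2$ against the gain from adjoining them to the larger set $S_2$.

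First I would enumerate $S_1 \setminus S_2 = \{t_1, \dots, t_k\}$ (assuming this difference is finite, which is the only case where both sides make sense as elements of $\Net$; the general case reduces to this by passing to a witnessing finite subset). Next, I would define the two telescoping chains
\[
T_i := (S_1 \cap S_2) \cup \{t_1,\dots,t_i\}, \qquad U_i := S_2 \cup \{t_1,\dots,t_i\},
\]
so that $T_0 = S_1 \cap S_2$, $T_k = S_1$, $U_0 = S_2$, and $U_k = S_1 \cup S_2$. Crucially, $T_{i-1} \subseteq U_{i-1}$ for every $i$, since $S_1 \cap S_2 \subseteq S_2$ and both chains have been augmented by the same elements $t_1,\dots,t_{i-1}$.

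The key step is then to invoke the monotonicity hypothesis \eqref{rankless} with $S_2 \leftsquigarrow T_{i-1}$, $S_1 \leftsquigarrow U_{i-1}$, and $s \leftsquigarrow t_i$. This yields
\[
\rankS(T_i) - \rankS(T_{i-1}) \;\ge\; \rankS(U_i) - \rankS(U_{i-1})
\]
for each $i = 1, \dots, k$. Summing these $k$ inequalities collapses both sides telescopically and gives precisely
\[
\rankS(S_1) - \rankS(S_1 \cap S_2) \;\ge\; \rankS(S_1 \cup S_2) - \rankS(S_2),
\]
which rearranges to \eqref{addone}.

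There is really no conceptual obstacle here; this is the classical derivation of submodularity from the diminishing-returns condition, and the only minor point to be careful about is confirming that one only needs to enumerate $S_1\setminus S_2$ (not $S_2\setminus S_1$) and that the inclusion $T_{i-1}\subseteq U_{i-1}$ is preserved at every stage of the induction, so that \eqref{rankless} is genuinely applicable at each step.
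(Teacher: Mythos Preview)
Your proof is correct and follows essentially the same approach as the paper's: both arguments adjoin the elements of the difference one at a time and invoke the diminishing-returns inequality \eqref{rankless} at each step, the paper phrasing this as an induction (on elements of $S_2 \setminus S_1$) while you write it out as an explicit telescoping sum (over $S_1 \setminus S_2$). If anything, your telescoping presentation is a bit cleaner than the paper's inductive one.
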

\begin{proof} Induction on $m = \rankS (S_2 \setminus S_1)$. If $m = 0,$ i.e., $S_2
\subseteq S_1$, then the left side of \eqref{addone} equals the
right side. Thus we may assume that $m \ge 1.$ Pick $s $ in a
d-base of  $S_2 \setminus S_1$. Let $S_2' = S_2 \setminus \{ s
\}$. Noting that $\rankS (S_2' \setminus S_1)= m-1,$ we see by
induction that   \begin{equation}\label{addone1} \rankS(S_1) +
\rankS(S_2') \ge \rankS(S_1 \cup S_2') + \rankS (S_1 \cap S_2'),
\end{equation}
or (taking $ S_1 \cup S_2'$ instead of $S_2$ in \eqref{rankless}),
$$\rankS(S_1) - \rankS (S_1 \cap S_2) = \rankS(S_1) - \rankS (S_1
\cap S_2') \ge \rankS(S_1 \cup S_2') - \rankS(S_2') $$ $$\qquad
\qquad \qquad \qquad \qquad  \ge
\rankS(S_1 \cup S_2) - \rankS(S_2) ,$$ 
 yielding
\eqref{addone}.
\end{proof}

\begin{prop}\label{semilattice}  $\rank(S_1) +
\rank(S_2) \ge \rank(S_1 \cup S_2) + \rank (S_1 \cap S_2) $ for
all $S_1, S_2 \subset S.$\end{prop}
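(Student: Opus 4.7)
The natural plan is to deduce the proposition directly from the preceding lemma by verifying that the supertropical rank function $\rank$ satisfies the monotonicity hypothesis \eqref{rankless}. Once this is shown, applying the lemma with $\rankS = \rank$ immediately yields \eqref{addone} for $\rank$, which is exactly the statement of the proposition. So the whole proof reduces to the single claim:
\[
\rank(S_2 \cup \{s\}) - \rank(S_2) \;\ge\; \rank(S_1 \cup \{s\}) - \rank(S_1)
\]
for every $S_2 \subseteq S_1 \subseteq S$ and every $s \in S$.

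Both sides of this inequality lie in $\{0,1\}$, since adjoining a single vector to a set can increase the tropical rank by at most one. Hence the inequality is equivalent to the implication: if $\rank(S_1 \cup \{s\}) > \rank(S_1)$, then $\rank(S_2 \cup \{s\}) > \rank(S_2)$. To prove this, I would first pick a maximum tropically independent subset $B_1^\ast \subseteq S_1 \cup \{s\}$; since $|B_1^\ast| = \rank(S_1)+1$ exceeds the rank of $S_1$, we must have $s \in B_1^\ast$, and then $B_1 := B_1^\ast \setminus \{s\}$ is a tropically independent subset of $S_1$ of size $\rank(S_1)$ for which $B_1 \cup \{s\}$ remains independent. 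Starting from a maximum tropically independent subset $B_2 \subseteq S_2$, the task is to exhibit a tropically independent subset of $S_2 \cup \{s\}$ of size $|B_2|+1$.

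The main obstacle is precisely this last step. In a classical matroid the augmentation axiom would make it routine, but because tropical dependence is not transitive (as recalled in \S\ref{dbasea}), one cannot simply push a dependence relation along inclusions. My approach would be to exploit the matrix characterizations of dependence: by Theorem~\ref{thm:base} and Corollary~\ref{coldep}, tropical independence of a tuple of vectors is controlled by nonsingularity of an associated matrix, and Proposition~\ref{sumdep1} on ghost annihilators provides quantitative control on how independent sets interact when one passes to subsets of different rank. Using these tools one should be able either to conclude directly that $B_2 \cup \{s\}$ is independent, or else to exchange an element of $B_1$ against an element of $B_2$ while staying inside $S_2 \cup \{s\}$ and preserving independence, ultimately producing the desired witness of size $|B_2|+1$. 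Once this supertropical substitution argument is in hand, the proposition follows by one invocation of the lemma.
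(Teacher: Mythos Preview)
Your strategy is exactly the paper's: verify that $\rank$ satisfies the monotonicity condition \eqref{rankless} and then invoke the preceding lemma. The paper dispatches monotonicity in a single sentence, simply asserting that the increment $\rank(S_i\cup\{s\})-\rank(S_i)$ is $0$ or $1$ ``depending on whether or not $s$ is independent of $S_i$, and only decreases as we enlarge the set.'' You go further than the paper by recognising that this is the substantive step and that the failure of transitivity of tropical dependence (recalled in \S\ref{dbasea}) makes it nonobvious.

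However, your proposal does not actually close that gap: the last paragraph is a plan (``one should be able either to conclude directly\dots or else to exchange\dots'') rather than an argument. The exchange-style reasoning you describe is precisely the matroid-type step whose validity is in doubt in the supertropical setting, and none of Theorem~\ref{thm:base}, Corollary~\ref{coldep}, or Proposition~\ref{sumdep1} obviously delivers the missing implication that an independent set $B_1\cup\{s\}$ with $|B_1|=\rank(S_1)$ forces the existence of an independent $(\,|B_2|{+}1\,)$-set inside $S_2\cup\{s\}$. So as written your proposal has the same gap the paper glosses over, only stated more explicitly; to complete the proof you must either supply a genuine supertropical argument for monotonicity or, as the paper does, rest on the one-line assertion.
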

\begin{proof}
   $\rank$ is a monotone function, since each side of \eqref{rankless} is 0
or 1, depending on whether or not $s$ is independent of $S_i$, and
only decreases as we enlarge the set.
\end{proof}

%
%
%

\subsection {Supertropical  eigenvectors}

The standard definition of an \textbf{eigenvector} of a matrix $A$
is a vector $v$, with \textbf{eigenvalue} $\bt$, satisfying $Av =
\bt v$. It is well known \cite{brualdi} that any (tangible) matrix
has an eigenvector.

\begin{examp}\label{eigen}  The characteristic polynomial $f_A$
of $$A =\vMat{4}{0}{0}{1}$$
 is $(\la+4)(\la+1) +
0 = (\la+4)(\la+1),$ and  the vector $(4,0)$ is a
 eigenvector of $A$, with eigenvalue 4. However, there is no
eigenvector having eigenvalue 1.
\end{examp}

In general, the lesser roots of the characteristic polynomial are
``lost" as eigenvalues. We rectify this deficiency by weakening
the standard definition.

\begin{defn}\label{eigen3}
A tangible vector $v$ is a \textbf{generalized supertropical
eigenvector} of a (not necessarily tangible) matrix $A$, with
\textbf{generalized supertropical eigenvalue} $\bt \in \tTz$,
 if $A^m v \lmodg \bt^m v  $ for some $m$;
the minimal such $m$ is called the \textbf{multiplicity} of the
eigenvalue (and also of the eigenvector). A \textbf{supertropical
eigenvector} is a
 generalized   supertropical eigenvector  of multiplicity 1.
\end{defn}

\begin{examp}\label{eigen4} The matrix
$A =\vMat{4}{0}{0}{1}$ of ~Example \ref{eigen} also has the
tangible supertropical eigenvector $v = (0, 4)$, corresponding to
the supertropical
 eigenvalue~$1$, since $$Av = (4^\nu, 5) = 1   v +(4^\nu, -\infty).$$
\end{examp}

%
%

\begin{prop}\label{eigen5} If $v$ is a tangible  supertropical eigenvector of
$A$ with  supertropical eigenvalue $\bt$, the matrix $A + \bt I$
is singular (and thus $\bt$ must be a (tropical) root of the
characteristic polynomial $f_A$ of $A$).
\end{prop}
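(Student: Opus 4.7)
The plan is to show that $(A+\beta I)v$ is a ghost vector, and then to conclude singularity of $A+\beta I$ by invoking the column-version of \thmref{thm:base} (namely \corref{coldep}).

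First I would unpack the definition: since $v$ is a supertropical eigenvector (multiplicity $1$), we have $Av \lmodg \beta v$, i.e., componentwise $(Av)_i \lmodg (\beta v)_i$. I would then compute
\[
(A+\beta I)v \;=\; Av + \beta v,
\]
and analyze its $i$-th entry in two cases. If $(Av)_i = (\beta v)_i$, then $(Av)_i + (\beta v)_i = (\beta v)_i + (\beta v)_i = ((\beta v)_i)^\nu$, which is a ghost (since $v$ and $\beta$ are tangible, $(\beta v)_i$ is tangible, so adding it to itself yields its ghost companion). If instead $(Av)_i = (\beta v)_i + g$ with $g$ a ghost satisfying $\nu(g)\ge \nu((\beta v)_i)$, then $(Av)_i + (\beta v)_i = (\beta v)_i + g + (\beta v)_i$, and since the ghost ideal absorbs sums with elements of comparable or smaller $\nu$-value, this entry is again ghost. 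Thus $(A+\beta I)v \in \tGz^{(n)}$.

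Next, writing $B = A+\beta I$ with columns $c_1,\dots,c_n$ and $v = (v_1,\dots,v_n)^\trn$, the identity $Bv = \sum_{i=1}^n v_i c_i \in \tGz^{(n)}$ expresses a ghost vector as a linear combination of the columns of $B$ with coefficients $v_i \in \tTz$ that are not all $\rzero$ (since the eigenvector $v$ is tangible and nonzero). By \defnref{tropdep1}, the columns of $B$ are tropically dependent, whence \corref{coldep} yields that $B = A+\beta I$ is singular.

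For the parenthetical assertion, I would observe that by \defnref{esschar} the characteristic polynomial is $f_A(\la) = \Det{\la I + A}$; substituting $\la = \beta$ gives $f_A(\beta) = \Det{\beta I + A} \in \tGz$ by the singularity just established, so $\beta$ is a tropical root of $f_A$ in the sense defined earlier (polynomial value lying in the ghost ideal). The main subtlety to be careful about is the first step: making sure the componentwise case analysis of $Av + \beta v$ really produces ghost entries in both the "exact" and "ghost-perturbed" cases, and that the tangibility and nonvanishing of $v$ is used to guarantee a genuine (nontrivial, tangible-coefficient) dependence of the columns — which is precisely what \corref{coldep} demands.
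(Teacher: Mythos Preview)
Your argument is correct and is the natural one: show $(A+\beta I)v\in\tGz^{(n)}$ and then invoke \corref{coldep}. The paper states this proposition without proof, so there is nothing to compare against; your write-up supplies exactly the expected justification. Two minor tidy-ups: the extra hypothesis ``$\nu(g)\ge\nu((\beta v)_i)$'' in your second case is harmless but unnecessary, since $((\beta v)_i)^\nu+g$ is a sum of two elements of the ghost ideal $\tGz$ and hence lies in~$\tGz$ regardless; and you should cover the possibility $v_i=\rzero$ (then $(\beta v)_i=\rzero$ and $(Av)_i\lmodg\rzero$ forces $(Av)_i\in\tGz$, so the entry is still in~$\tGz$).
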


Conversely, we have:

\begin{thm}[{\cite[Theorem~7.10]{IzhakianRowen2008Matrices}}]\label{eigen8} Assume that $\nu|_\tT : \tT \to \tG$ is 1:1. For any  matrix $A$,
the dominant tangible root of the  characteristic polynomial of
$A$ is an eigenvalue of $A$, and has  a tangible eigenvector. The
other tangible roots   are precisely the supertropical eigenvalues
of $A$.
\end{thm}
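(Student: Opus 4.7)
The plan is to establish three assertions: (i) every tangible supertropical eigenvalue of $A$ is a tangible root of $f_A$; (ii) conversely, every tangible root of $f_A$ is a supertropical eigenvalue; and (iii) the dominant tangible root in fact admits a genuine tangible eigenvector $v$ with $Av = \beta v$, not merely $A v \lmodg \beta v$. Assertion (i) is immediate from Proposition~\ref{eigen5}: a tangible eigenvector with eigenvalue $\beta$ forces $A+\beta I$ to be singular, so $f_A(\beta) = \Det{A+\beta I} \in \tGz$.

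For (ii), let $\beta \in \tT$ be a tangible root of $f_A$. Then $\Det{A+\beta I} \in \tGz$, so $A+\beta I$ is singular, and by Corollary~\ref{coldep} its columns $c_1,\dots,c_n$ are tropically dependent. Thus there exist $\alpha_1,\dots,\alpha_n \in \tTz$, not all $\rzero$, with $\sum_j \alpha_j c_j \in \tGz^{(n)}$. Setting $v = (\alpha_1,\dots,\alpha_n)^\trn$, this reads $(Av)_i + \beta v_i \in \tGz$ in each coordinate. Since $\beta$ and each $\alpha_i$ are tangible and $\nu|_\tT$ is injective, the supertropical addition rule leaves only three possibilities per coordinate: $(Av)_i$ is ghost with $\nu$-value strictly greater than $(\beta v_i)^\nu$; $(Av)_i$ is tangible and equals $\beta v_i$; or $(Av)_i$ is ghost with $\nu$-value equal to $(\beta v_i)^\nu$. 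In all three cases $(Av)_i \lmodg \beta v_i$, so $v$ is a tangible supertropical eigenvector of multiplicity one with eigenvalue $\beta$.

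Step (iii) is the main obstacle: here one must upgrade $Av \lmodg \beta v$ to exact equality. Normalize by $B := \beta^{-1} A$, so that $\one$ is the dominant root of $f_B$; by the max-plus interpretation this says the maximal cycle mean in the weighted digraph of $B$ equals $\one$, attained on some critical cycle $C$. Form the Kleene-star sum $B^{\ast} := I + B + B^2 + \cdots + B^{n-1}$; since no cycle mean exceeds $\one$, higher powers of $B$ contribute nothing new and one checks $B B^{\ast} \lmodg B^{\ast}$. Take for $v$ the column of $B^{\ast}$ indexed by any vertex on $C$: the critical cycle supplies a dominating tangible entry in each coordinate. A digraph analysis in the style of \cite[Theorem~4.9]{IzhakianRowen2008Matrices}, leaning on Hall's Marriage Theorem, then shows that no competing product of matrix entries ties this dominant contribution in $\nu$-value; this is exactly where the injectivity of $\nu|_\tT$ is used, since it prevents two distinct tangible products from sharing a $\nu$-value and summing to a ghost that would turn $v$ ghost. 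Consequently $Bv = v$ tangibly, i.e., $Av = \beta v$.

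The hardest point is indeed step (iii): the adjoint/Kleene-star construction immediately yields the weak relation $Av \lmodg \beta v$, and promoting this to a tangible equality requires the combinatorial fact that the dominant root is witnessed uniquely (up to $\nu$-value) by a critical cycle in the digraph of $A$. For lower tangible roots no such dominance is available, which is precisely why they remain only supertropical and not classical eigenvalues.
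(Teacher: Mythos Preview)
The paper itself gives no proof of this theorem: it is a survey, and the statement is simply quoted with a citation to \cite[Theorem~7.10]{IzhakianRowen2008Matrices}. So there is nothing in the present paper to compare your argument against; I can only assess the proposal on its own merits.

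Your parts (i) and (ii) are fine. For (ii), the case analysis is correct once you also note (as you implicitly do) that $(Av)_i^\nu < (\beta v_i)^\nu$ is impossible when $v_i\ne\rzero$, since then the sum would be the tangible element $\beta v_i$.

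Part (iii) has a genuine gap. Your key sentence --- that injectivity of $\nu|_\tT$ ``prevents two distinct tangible products from sharing a $\nu$-value'' --- misreads the hypothesis. The assumption $\nu|_\tT$ is 1:1 says only that distinct tangible \emph{elements} have distinct $\nu$-values; it does \emph{not} preclude two different products of entries of $A$ (two different paths in the digraph) from landing on the same tangible element, and when they do their sum is ghost. So nothing in the hypothesis forces the column of $B^\ast$ at a critical vertex to be tangible, nor forces the computation of $Bv$ to have a unique dominant term in each coordinate. Your appeal to ``a digraph analysis in the style of \cite[Theorem~4.9]{IzhakianRowen2008Matrices}, leaning on Hall's Marriage Theorem'' is exactly the step that needs to be written out, and it is not: Hall's theorem is used in that reference to extract $n$-multicycles for determinant identities, which is a different combinatorial problem from showing uniqueness of the optimal walk to each vertex from a critical node. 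As written, your argument for (iii) establishes only $Bv \lmodg v$, which you already had from (ii); the promotion to a tangible equality $Bv=v$ is asserted but not proved.
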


Let us return to our example
  $A =  \left(\begin{matrix} 0 & 0 \\
1 & 2
\end{matrix}\right).$
 Its characteristic polynomial is $\la^2 +
2\la +2 = (\la+0)(\la+2),$ whose roots are $2$ and $0$. The
eigenvalue $2$ has tangible eigenvector $v = (0, 2)$  since $Av =
(2, 4) = 2v$, but there are no other tangible eigenvalues. $A$
does have the tangible supertropical eigenvalue $0$, with tangible
supertropical eigenvector $w = (2,1),$ since $Aw = (2, 3^\nu) = 0w
+ ( -\infty, 3^\nu).$
Note that $A + 0I = \left(\begin{matrix} 0^\nu & 0 \\
1 & 2
\end{matrix}\right)$ is singular, because $|A + 0I| = 2^\nu $.

Furthermore, $A^2 =  \left(\begin{matrix} 1 & 2 \\
3 & 4
\end{matrix}\right)$ is a root of $\la^2 + 4A,$ and thus $A $ is a
root of $g = \la^4 + 4\la^2 = (\la(\la+2))^2,$ but~$0$ is not a
root of $g$ although it is a root of $f_A$. This shows that the
naive formulation of Frobenius' theorem fails in the supertropical
theory, and is explained in the work of Adi Niv~\cite{N}.

\subsection{Bilinear forms and orthogonality}

One can refine the study of bases by introducing angles, i.e.,
orthogonality, in terms of bilinear forms. Let us quote some
results from \cite{IKR2}.

\begin{defn} \label{12} A \textbf{(supertropical) bilinear form} $B$ on a (supertropical) vector space  $V  $
 is a function $B : V\times V \to \Fz$ satisfying
$$ B(v_1 + v_2, w_1 +  w_2) \lmodg B(v_1,w_1) + B(v_1,w_2) +
B(v_2,w_1) + B(v_2,w_2) ,$$
$$
B(\a v_1, w_1) = \a  B(v_1 ,w_1 )=   B(v_1 ,\a w_1), $$ for all
$\a \in \Fz$ and $v_i\in V,$ and $w_j\in V'.$
\end{defn}

We work with a fixed bilinear form $B = \bil {\phantom w}{
\phantom v}$ on a (supertropical) vector space $V \subseteq
\Fz^{(n)} $.
 The \textbf{Gram matrix} of vectors $v_1, \dots,
v_k \in \Fz^{(n)}$ is defined as the $k \times k$  matrix
\begin{equation}\label{eq:GramMatrix}
\tilG(v_1, \dots, v_k ) = \left( \begin{array}{cccc}
                      \bil {v_1}{v_1} &  \bil {v_1}{v_2} & \cdots & \bil {v_1}{v_k} \\[1mm]
                      \bil {v_2}{v_1} &  \bil {v_2}{v_2} & \cdots & \bil {v_2}{v_k} \\[1mm]
                         \vdots & \vdots &  \ddots & \vdots \\[1mm]
                      \bil {v_k}{v_1} &  \bil {v_k}{v_2} & \cdots & \bil {v_k}{v_k} \\
                        \end{array} \right).
\end{equation}
The set $\{v_1, \dots, v_k\}$ is \textbf{nonsingular} (with
respect to $B$) when its Gram matrix is nonsingular.

In particular, given a vector space $V$ with s-base $\{ b_1,
\dots, b_k\}$, we have the matrix $\widetilde G = \tilG(b_1,
\dots, b_k ) $, which can be written as $(g_{i,j})$ where $g
_{i,j} = \bil {b_i}{b_j}.$ The  singularity of $\tilG$ does not
depend on the choice of s-base.

\begin{defn} For vectors $v,w$ in $V$, we write
 $v \gperp w$
when $\bil{v}{w} \in \tGz$,  that is $\langle v,w\rangle\lmodg
\fzero$, and say that~$v$ is \textbf{left ghost orthogonal} to
$w$.  We write
  $W^ \gperp $ for $ \{v \in V:
 v \gperp w$ for all $w \in W.\}$ 
\end{defn}

\begin{defn} A subspace $W$ of $V$ is called \textbf{nondegenerate} (with
respect to $B$), if  $W^\gperp \cap W $ is ghost. The bilinear
form $B$ is \textbf{nondegenerate} if the space $V$ is
 nondegenerate.
\end{defn}

\begin{lem}\label{Gramrek} Suppose $\{w_1, \dots, w_m \}$ tropically spans
a subspace $W$ of $V$, and $v\in V.$ If $ \sum_{i=1}^m \bt_i \bil
v{w_i} \in \tGz$ for all $\bt_i \in \tT$, then
 $v\in W^\gperp.$
\end{lem}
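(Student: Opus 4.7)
The plan is to reduce the statement to the hypothesis via bilinearity. Take an arbitrary $w \in W$; the goal is to show $\bil{v}{w} \in \tGz$.

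Since $\{w_1,\dots,w_m\}$ tropically spans $W$, write $w \lmodg \sum_{i=1}^m \bt_i w_i$ for some $\bt_i \in \tTz$. (Any $\bt_i$ equal to $\zero$ simply drops out of the sum, leaving a combination with strictly tangible coefficients, which is what the hypothesis requires.) Now apply the bilinearity axioms of Definition~\ref{12}: iterating the additive ghost-surpassing relation in the first slot of $B$, together with homogeneity $\bil{v}{\bt_i w_i} = \bt_i \bil{v}{w_i}$, yields
\[
\bil{v}{w} \lmodg \bil{\,v\,}{\,\sum_{i=1}^m \bt_i w_i\,} \lmodg \sum_{i=1}^m \bt_i \bil{v}{w_i}.
\]
By the hypothesis of the lemma, the right-hand side lies in $\tGz$. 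Since a quantity that ghost-surpasses a ghost is itself a ghost, we conclude $\bil{v}{w} \in \tGz$, i.e., $v \gperp w$.

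As $w$ was an arbitrary element of $W$, this gives $v \in W^\gperp$, which is exactly the conclusion.

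The only mildly delicate point — and the main obstacle — is bookkeeping the $\lmodg$ relation in the passage from $\bil{v}{\sum \bt_i w_i}$ to $\sum \bt_i \bil{v}{w_i}$, since the bilinearity axiom as stated is not an equality but a ghost-surpasses relation. However, this is routine: one shows by induction on $m$ that the ghost error terms remain ghosts and can be absorbed, so the chain of $\lmodg$ compositions above is valid.
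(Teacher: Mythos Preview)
The paper states this lemma without proof (this is a survey, with the result drawn from \cite{IKR2}), so there is no argument in the paper to compare against. Your approach is the natural one and is essentially correct.

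One point deserves more care. Your parenthetical claim that zero coefficients ``simply drop out \dots\ which is what the hypothesis requires'' glosses over a real issue: the hypothesis asserts that $\sum_{i=1}^m \bt_i \bil{v}{w_i}\in\tGz$ when \emph{all} $m$ coefficients lie in $\tT$, not merely when the nonzero ones do. If the spanning expression for $w$ omits some $w_i$ (or uses a ghost coefficient, which you would also want to replace by a tangible lift before invoking the hypothesis), you are applying the hypothesis to a partial sum, which is not literally what is given. The gap is easy to close in a $1$-\semifield0: either fill in the missing indices with tangible $\bt_j$ of sufficiently small $\nu$-value so that the extra terms do not change the sum, or first observe that the hypothesis already forces each individual $\bil{v}{w_i}\in\tGz$ (choose $\bt_i$ to be $\nu$-dominant over the others), after which any combination $\sum \a_i\bil{v}{w_i}$ is automatically ghost regardless of which coefficients appear. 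Either way the proof goes through, but this step should be made explicit rather than folded into the ``routine bookkeeping'' of your final paragraph.
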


\begin{thm}\label{thm:GramMat}(\cite[Theorem~6.7]{IKR2}) Assume that vectors
 $w_1, \dots, w_k \in V$  span  a nondegenerate subspace~$W$ of $V$.
If $ | \tilG(w_1, \dots, w_k ) | \in \tGz,$ then $w_1, \dots, w_k
$ are tropically dependent.
\end{thm}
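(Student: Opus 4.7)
The plan is to translate the singularity of the Gram matrix into a tropical dependence among the $w_i$'s, using the nondegeneracy of $W$ to exclude the possibility that the witnessing linear combination is nonghost.

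First I would apply Corollary~\ref{coldep} to the Gram matrix $\tilG(w_1,\dots,w_k)$. Since $|\tilG| \in \tGz$, the matrix is singular, so by the corollary its rows are tropically dependent: there exist $\al_1,\dots,\al_k \in \tTz$, not all $\rzero$, such that
\[
\sum_{i=1}^k \al_i \bil{w_i}{w_j} \in \tGz, \qquad j=1,\dots,k.
\]

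Next I would set $v := \sum_{i=1}^k \al_i w_i \in W$, and use the supertropical bilinearity axioms of Definition~\ref{12} together with Remark~\ref{idpar} to conclude that for each $j$,
\[
\bil{v}{w_j} \lmodg \sum_{i=1}^k \al_i \bil{w_i}{w_j}.
\]
Since the right-hand side lies in $\tGz$, and adding a ghost to a ghost keeps it a ghost, we get $\bil{v}{w_j} \in \tGz$ for every $j$. Then for arbitrary tangible $\bt_j \in \tT$, the sum $\sum_j \bt_j \bil{v}{w_j}$ is a sum of ghosts (times tangibles), hence lies in $\tGz$. Lemma~\ref{Gramrek} now gives $v \in W^\gperp$.

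Finally, since $v$ is also a linear combination of the spanning set $\{w_1,\dots,w_k\}$ of $W$, we have $v \in W \cap W^\gperp$. By the nondegeneracy hypothesis, $v$ must be ghost, i.e.\ $v \in \tGz^{(n)}$. But then $\sum_i \al_i w_i \in \tGz^{(n)}$ with not all $\al_i$ equal to $\rzero$, which is exactly the statement that $w_1,\dots,w_k$ are tropically dependent.

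The main obstacle I anticipate is bookkeeping around the relation $\lmodg$: the supertropical bilinearity in Definition~\ref{12} is only an equality up to ghost, not a strict equality, so I need to track carefully that this ``up-to-ghost'' slack never converts a genuine ghost into a tangible value. The other subtlety is the passage from ``$\bil{v}{w_j} \in \tGz$ for each spanning $w_j$'' to ``$v \in W^\gperp$''; this is exactly what Lemma~\ref{Gramrek} is designed to handle, so the argument cleanly reduces to invoking it.
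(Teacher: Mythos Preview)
The paper does not supply its own proof of this theorem; it merely cites \cite[Theorem~6.7]{IKR2}. Your argument is correct and is precisely the natural approach one would expect in the cited source: translate singularity of the Gram matrix into a tropical dependence among its rows via Corollary~\ref{coldep}, push this through bilinearity to obtain $v=\sum_i \al_i w_i \in W^\gperp$ via Lemma~\ref{Gramrek}, and then invoke nondegeneracy of $W$ to force $v$ ghost. One small remark: your invocation of Remark~\ref{idpar} is not really needed here---the relevant fact is simply that $a \lmodg b$ with $b\in\tGz$ forces $a\in\tGz$, since $\tGz$ is an ideal and $a$ is either $b$ or $b+\text{ghost}$; the left additivity $\bil{v_1+v_2}{w}\lmodg \bil{v_1}{w}+\bil{v_2}{w}$ follows from Definition~\ref{12} by taking $w_2=\zero_V$.
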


\begin{cor}\label{nondeg1}
If the bilinear form $B$ is nondegenerate on a vector space $V$,
then   the Gram matrix   (with respect to any given supertropical
d,s-base of $V$) is nonsingular.
\end{cor}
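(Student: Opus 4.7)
The plan is to derive this as a direct contrapositive of Theorem~\ref{thm:GramMat}. Let $\{b_1, \dots, b_k\}$ be the given d,s-base of $V$. Since it is an s-base it tropically spans $V$, and since $V$ is nondegenerate (by hypothesis), we are exactly in the situation of Theorem~\ref{thm:GramMat} with $W = V$ and $w_i = b_i$.

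First, I would argue by contradiction: suppose that $|\tilG(b_1,\dots,b_k)|\in \tGz$, i.e.\ that the Gram matrix is singular. Then Theorem~\ref{thm:GramMat} immediately yields that $b_1,\dots,b_k$ are tropically dependent. But by definition a d-base is a (maximal) tropically \emph{in}dependent set, so in particular its elements are tropically independent. This is the contradiction, and hence $|\tilG(b_1,\dots,b_k)|\notin \tGz$, which is exactly the statement that the Gram matrix is nonsingular.

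There is essentially no obstacle once Theorem~\ref{thm:GramMat} is in hand; the only mild subtlety is making sure the hypotheses of that theorem are satisfied, namely that (a) the $b_i$ tropically span a subspace and (b) that subspace is nondegenerate. Both are immediate: (a) follows from the s-base property and (b) follows because the ambient space $V$ itself is assumed nondegenerate, so taking $W=V$ makes the hypothesis trivially hold. The statement of the corollary is also independent of the choice of d,s-base precisely because the remark following the definition of the Gram matrix observed that singularity of $\tilG$ does not depend on the choice of s-base, so the argument goes through uniformly.
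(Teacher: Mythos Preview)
Your proof is correct and is exactly the intended derivation: the paper states this result as an immediate corollary of Theorem~\ref{thm:GramMat} (without writing out a proof), and your contrapositive argument --- spanning from the s-base property, nondegeneracy from the hypothesis on $V$, and independence from the d-base property --- is precisely how the corollary follows.
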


\begin{defn}
The bilinear form $B$ is \textbf{supertropically alternate} if
$\bil vv \in \tGz$ for all $v\in V.$
 $B$ is \textbf{supertropically symmetric}  if
$\bil vw + \bil wv\in \tGz$ for all $v,w \in V$.
\end{defn}

 We aim for the supertropical version (\cite[Theorem~6.19]{IKR2})
of a
 classical theorem of Artin, that any bilinear form
in which ghost-orthogonality is symmetric must be  a
supertropically symmetric bilinear form.

 \begin{defn}\label{os} The (supertropical) bilinear form $B$ is \textbf{orthogonal-symmetric} if it
 satisfies the following property   for any finite sum, with $v_i,w
\in V$:

\begin{equation}\label{orthsym} \qquad \sum _i \bil{v_i}{w } \in \tGz \quad  \text{iff}\quad
\sum _i \bil{w }{v_i} \in \tGz, \end{equation}

 $B$ is \textbf{supertropically orthogonal-symmetric}  if $B$ is
orthogonal-symmetric and satisfies the additional property that
$\bil vw \nucong \bil wv $ for all $v,w \in V$ satisfying $\bil vw
\in \tT.$
\end{defn}
%
%

The symmetry condition extends to sums, and after some easy lemmas
we obtain (\cite[Theorem~6.19]{IKR2}):

\begin{thm}\label{orthogsym} Every orthogonal-symmetric bilinear form~$B$ on a vector space
$V$  is  supertropically symmetric.
\end{thm}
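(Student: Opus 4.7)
The plan is to argue by contradiction, supposing that $\bil vw + \bil wv \notin \tGz$ for some $v,w \in V$, and to engineer a violation of the orthogonal-symmetric equivalence. As a first step I would apply the single-term case of orthogonal-symmetry to conclude $\bil vw \in \tGz$ iff $\bil wv \in \tGz$; since the sum $\bil vw + \bil wv$ is assumed tangible, both $a := \bil vw$ and $b := \bil wv$ must be tangible, and WLOG $a^\nu > b^\nu$.

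The core construction would then test orthogonal-symmetry on linear combinations of $v$ and $w$ with carefully chosen tangible coefficients in $\tT$, in sub-cases depending on $\bil vv$ and $\bil ww$. When at least one of $\bil vv, \bil ww$ is nonzero, I would apply orthogonal-symmetry to the single-term left-sum $u = \alpha v + \beta w$ against the right-vector $v$ (or $w$), obtaining
$$\alpha\,\bil vv + \beta b \ \in \tGz \ \iff\ \alpha\,\bil vv + \beta a \ \in \tGz,$$
and selecting $\alpha^\nu,\beta^\nu$ so the two sides fall on opposite sides of the ghost/tangible dichotomy: if $\bil vv$ is tangible, match $\nu$-values to force one side ghost while leaving the other dominated by a single tangible term; if $\bil vv$ is ghost, sandwich $(\alpha\,\bil vv)^\nu$ strictly between $(\beta b)^\nu$ and $(\beta a)^\nu$ so that one side is a lone ghost and the other a lone tangible. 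The residual case $\bil vv = \bil ww = \zero$ requires instead a two-term left-sum $\{\alpha v,\beta w\}$ tested against the right-vector $\gamma v + \delta w$, yielding
$$\alpha\delta a + \beta\gamma b \ \in \tGz \ \iff\ \alpha\delta b + \beta\gamma a \ \in \tGz;$$
matching the $\nu$-values of $\alpha\delta a$ and $\beta\gamma b$ renders the left side ghost (two tangibles of equal $\nu$-value), while the same choice makes the ratio $(\beta\gamma a)^\nu/(\alpha\delta b)^\nu = (a^\nu/b^\nu)^2$ strictly exceed $1$, so the right side is dominated by the single tangible $\beta\gamma a$.

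The main obstacle will be ensuring the needed scalars actually exist in $\tT$: each sub-case reduces to solving a small system of $\nu$-value equalities and strict inequalities, which relies on $F$ being a 1-semifield whose value group $\tG$ is densely (or divisibly) ordered so that the indicated $\nu$-values can be realized. The ``easy lemmas'' alluded to in the excerpt presumably codify exactly this scalar-engineering, packaging the sub-cases into a uniform contradiction, after which the theorem follows.
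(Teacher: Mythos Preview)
Your overall strategy --- assume $\bil vw + \bil wv$ is tangible and engineer scalars to violate \eqref{orthsym} --- is the right one, and your main case (where $\bil vv \ne \zero$, say) is correct. In fact it is simpler than you suggest: matching $(\alpha\,\bil vv)^\nu = (\beta b)^\nu$ works uniformly whether $\bil vv$ is tangible or ghost (a ghost plus a tangible of equal $\nu$-value is still ghost, while $\beta a$ then strictly dominates), so no ``sandwiching'' is needed and no density or divisibility of $\tG$ is required --- only that $\tT$ is a group, i.e., that $F$ is a 1-\semifield0.

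There is, however, a genuine gap in your residual case $\bil vv = \bil ww = \zero$. You take the single right-vector $w' = \gamma v + \delta w$ and expand $\bil{\alpha v}{w'}$, $\bil{w'}{\beta w}$ linearly. But by Definition~\ref{12} the form $B$ is only weakly bilinear: one has $B(v, w_1+w_2) \lmodg B(v,w_1)+B(v,w_2)$, not equality. Hence you only obtain $\bil{w'}{\beta w} \lmodg \beta\gamma a$, which means $\bil{w'}{\beta w}$ is either $\beta\gamma a$ or some ghost of $\nu$-value $\ge (\beta\gamma a)^\nu$; in the latter event the right-hand sum $\bil{w'}{\alpha v}+\bil{w'}{\beta w}$ is ghost and no contradiction arises. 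So the displayed equivalence $\alpha\delta a + \beta\gamma b \in \tGz \iff \alpha\delta b + \beta\gamma a \in \tGz$ is not what \eqref{orthsym} delivers for this choice.

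The paper's one-line hint (``the symmetry condition extends to sums, and after some easy lemmas'') points to the repair used in \cite{IKR2}: one first shows that orthogonal-symmetry extends to finite families on \emph{both} sides,
\[
\sum_{i,j}\bil{v_i}{w_j}\in\tGz \quad\Longleftrightarrow\quad \sum_{i,j}\bil{w_j}{v_i}\in\tGz,
\]
and then applies your scalars directly to the families $\{v_i\}=\{\alpha v,\beta w\}$ and $\{w_j\}=\{\gamma v,\delta w\}$. This yields exactly $\alpha\delta a + \beta\gamma b \in \tGz \iff \alpha\delta b + \beta\gamma a \in \tGz$ without ever evaluating $B$ on a sum, and your choice $(\alpha\delta a)^\nu=(\beta\gamma b)^\nu$ finishes the argument as you wrote it.
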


\section{Identities of semirings, especially matrices}\label{iden}
The word ``identity'' has several interpretations, according to
its context. First of all, there are well-known matrix identities
such as the Hamilton-Cayley identity which says that any matrix is
a root of its characteristic polynomial.

Since the
classical theory of polynomial identities is tied in with
invariant theory, we also introduce layered polynomial identities
(PIs), to enrich our knowledge of layered matrices.

\subsection{Polynomial identities of \semirings0}\label{PI}

We draw on basic concepts of polynomial identities, i.e., PI's,
say from \cite[Chapter 23]{Row}. Since \semirings0 do not involve
negatives, we modify the definition a bit.

\begin{defn} The  \textbf{free $\Net$-\semiring0} $\Net\{ x_1, x_2, \dots \}$ is the monoid \semiring0
of the free (word) monoid  $\{ x_1, x_2, \dots \}$ over the
commutative \semiring0 $\Net$.
\end{defn}

\begin{defn}\label{PI2} A \textbf{(\semiring0) polynomial identity} (PI) of a \semiring0 $R$    is a pair $(f,g)$ of (noncommutative) polynomials $f(x_1,\dots,x_m), g(x_1,\dots,x_m) \in
\Net \{x_1,\dots , x_m\}$  for which $$f(r_1,\dots,r_m)=
g(r_1,\dots,r_m),\quad \forall  r_1,\dots, r_m  \in R .$$
 We write
$(f,g) \in \id(R)$ when $(f,g)$ is a PI of $R$.
\end{defn}
%

\begin{rem}\label{PI20} A \textbf{semigroup   identity}  of a semigroup $ \tS$
is
  a pair $(f,g)$ of (noncommutative) monomials $f(x_1,\dots,x_m), g(x_1,\dots,x_m) \in
\Net \{x_1,\dots , x_m\}$  for which $f(s_1,\dots,s_m)=
g(s_1,\dots,s_m),$\ $\forall  s_1,\dots, s_m  \in \tS $. If~$\tS$
is contained in the multiplicative semigroup of a \semiring0 $R$,
the semigroup   identities of  $\tS$ are precisely the \semiring0
PIs $(f,g)$ where $f$ and $g$ are monomials.
\end{rem}

Akian, Gaubert and Guterman \cite[Theorem~4.21]{AGG} proved their
\textbf{strong transfer principle}, which immediately implies the
following easy but important observation:

\begin{thm}\label{meta2} If $f, g \in \Net \{x_1, \dots, x_n\}$ have disjoint supports and $f - g$ is
a PI of $M_n(\Z),$ then $f=g$ is also a \semiring0 PI of
$M_n(R)$ for any commutative \semiring0 $R$.\end{thm}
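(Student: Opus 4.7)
The plan is to treat this as a direct corollary of the Akian--Gaubert--Guterman strong transfer principle (Theorem 4.21 of AGG, cited immediately before the statement); the disjoint-support hypothesis is the hook that makes the transfer unambiguous.

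First I would unpack what disjoint supports buys us. Since no monomial of $\Net\{x_1,\dots,x_n\}$ appears in both $f$ and $g$, the polynomial $h := f - g \in \Z\{x_1,\dots,x_n\}$ has a canonical decomposition $h = h_+ - h_-$ with $h_+ = f$ and $h_- = g$, both living in the free $\Net$-semiring. This is what makes the equation $f = g$ meaningful as a semiring identity in the sense of Definition \ref{PI2}: both sides can be evaluated over any semiring without invoking subtraction, and inside any ring the equation $f = g$ is literally equivalent to $h = 0$.

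Next I would feed this into AGG's strong transfer principle. The hypothesis says that $h$ vanishes on every $n$-tuple of matrices in $M_n(\Z)$. The content of AGG's theorem is precisely that an integral polynomial identity $h_+ - h_- = 0$ of $M_n(\Z)$, whose positive and negative parts $h_+, h_-$ have disjoint supports, transfers to the semiring identity $h_+(A_1,\dots,A_n) = h_-(A_1,\dots,A_n)$ on $M_n(R)$ for every commutative semiring $R$. Applied to our $h_+ = f$ and $h_- = g$, this is exactly the desired conclusion $(f,g) \in \id(M_n(R))$.

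The main obstacle is really just bookkeeping: one must check that the hypotheses of the AGG transfer principle are met, and in particular that the disjoint-support condition is exactly what forbids the kind of cancellation that would otherwise prevent a sensible semiring reading of $f = g$. Once this matching is made, the theorem is a one-line consequence of the cited deeper result, which is why the authors call it ``easy but important.'' No further combinatorial or matrix-theoretic work is needed beyond invoking AGG.
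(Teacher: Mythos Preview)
Your proof is correct: the paper itself says just before the statement that AGG's strong transfer principle ``immediately implies'' this observation, so invoking it as a black box is legitimate, and your identification $h_+ = f$, $h_- = g$ via the disjoint-support hypothesis is exactly the right bookkeeping.

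However, the paper's own proof does \emph{not} cite AGG; it gives a short self-contained argument that effectively reproves the needed case of the transfer principle. The route is: since $\Z$ is an infinite integral domain, a PI of $M_n(\Z)$ is a formal identity and hence holds over $M_n(\Z[\xi_1,\xi_2,\dots])$; the disjoint-support hypothesis then lets one read $f-g=0$ over this polynomial ring as the semiring equality $f=g$ over $M_n(\Net[\xi_1,\xi_2,\dots])$ (no cancellation is possible between $f$ and $g$); finally, any commutative \semiring0 $R$ is a homomorphic image of the free commutative \semiring0 $\Net[\xi_1,\xi_2,\dots]$, so $M_n(R)$ is a homomorphic image of $M_n(\Net[\xi_1,\xi_2,\dots])$, and semiring PIs descend along homomorphisms. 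Your approach is shorter given AGG as a prerequisite; the paper's approach is self-contained and makes explicit the mechanism (passage through the free commutative semiring) that underlies the transfer.
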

\begin{proof} Since $\Z$ is an infinite integral domain, $f - g$ is also a
PI of  $M_n(C),$ where $C = \Z[\xi_1, \xi_2, \dots ] $ denotes the
free commutative ring in countably many indeterminates, implying
$(f,g)$ is a \semiring0 PI of $M_n( \Net[\xi_1, \xi_2, \dots ] )$.
But the \semiring0 $M_n(R)$ is a homomorphic image of   $M_n(
\Net[\xi_1, \xi_2, \dots ] )$, implying $(f,g) \in \id(M_n(
R ))$.
 \end{proof}

\begin{cor}\label{meta3} Any PI of $M_n(\Z)$ yields a corresponding
 \semiring0 PI of
$M_n(R)$ for all commutative \semirings0 $R$. \end{cor}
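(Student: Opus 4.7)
The plan is to reduce to Theorem~\ref{meta2} by splitting an arbitrary PI of $M_n(\Z)$ into positive and negative parts with disjoint supports.

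Suppose $h \in \Z\{x_1,\dots,x_m\}$ is a PI of $M_n(\Z)$. Write
$$h \;=\; \sum_{\mu \in \mcC_+} c_\mu \mu \;-\; \sum_{\mu \in \mcC_-} d_\mu \mu,$$
where $\mcC_+$ (respectively $\mcC_-$) is the set of monomials appearing in $h$ with positive (respectively negative) integer coefficient, and $c_\mu, d_\mu \in \Net$. By construction $\mcC_+ \cap \mcC_- = \emptyset$. Define
$$f \;=\; \sum_{\mu \in \mcC_+} c_\mu \mu, \qquad g \;=\; \sum_{\mu \in \mcC_-} d_\mu \mu,$$
which are elements of the free $\Net$-semiring$^\dagger$ $\Net\{x_1,\dots,x_m\}$ with disjoint supports. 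By construction $f - g = h$, which is a PI of $M_n(\Z)$.

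Now I would invoke Theorem~\ref{meta2} directly: since $f,g$ have disjoint supports and $f-g$ is a PI of $M_n(\Z)$, the pair $(f,g)$ is a semiring$^\dagger$ PI of $M_n(R)$ for every commutative semiring$^\dagger$ $R$. Thus the original PI $h$ of $M_n(\Z)$ corresponds to the semiring$^\dagger$ PI $(f,g) \in \id(M_n(R))$, as desired.

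There is essentially no obstacle here, since all the real work has been done in Theorem~\ref{meta2}; the corollary is just the observation that every integer polynomial $h$ admits a canonical ``positive/negative'' decomposition $h = f-g$ with $f,g$ having disjoint supports and nonnegative integer coefficients, and that this decomposition is the appropriate bridge between ring PIs (expressed as ``$h = 0$'') and semiring$^\dagger$ PIs (expressed as pairs ``$f = g$'').
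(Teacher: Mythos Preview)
Your proof is correct and follows exactly the paper's approach: split the given PI into its positive part $f$ and negative part $g$ (which have disjoint supports by construction) and apply Theorem~\ref{meta2}. The paper's proof is the one-line version of what you wrote.
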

\begin{proof} Take $f$ to be the sum of the terms having positive coefficient, and $g$ to be the sum of the terms having negative coefficient,
and apply the theorem.
 \end{proof}

 Many (but not all) matrix PIs can be viewed  in terms of
 Theorem~\ref{meta2}, although semiring versions  of basic results
such   as   the Amitsur-Levitzki Theorem and Newton's Formulas
often are more transparent here.

 We say that
polynomials $f(x_1,\dots, x_m)$ and $g(x_1,\dots, x_m)$ are a
$t$-\textbf{alternating pair} if $f$ and $g$ are interchanged
  whenever we interchange a pair $x_i$ and $x_j $ for some $1
\le i < j \le t.$ For example, $x_1x_2$ and $x_2x_1$ are a
2-alternating pair. Sometimes we write the
 non-alternating variables as $y_1,y_2,\dots$;  we  write $y$ as
shorthand for all the $y_j.$

\begin{defn}
We partition the symmetric group $S_t$ of permutations in $t$
letters into the even permutations~$S_t^+ $ and  the odd
permutations $S_t^-.$
 Given a $t$-linear polynomial $h(x_1, \dots, x_t;
y)$, we define the \textbf{$t$-alternating pair}
$$h_{\alt}^+(x_1, \dots, x_t; y) := \sum _{\sig \in S_t^+}
h (x_{\sig (1)}, \dots, x_{\sig (t)}; y)$$ and $$h_{\alt}^-(x_1,
\dots, x_t; y) := \sum _{\sig \in S_t^-} h (x_{\sig (1)}, \dots,
x_{\sig (t)}; y).$$

 The \textbf{standard pair} is $\stn_t := (h_{\alt}^+,h_{\alt}^-)$,
where $h=x_1 \cdots x_t.$  Explicitly,
$$\stn_t :=
\bigg(\sum_{\sig \in S _t^+} \, x_{\sig (1)} \cdots  x_{\sig
(t)},\sum_{\sig \in S _t^-} \, x_{\sig (1)} \cdots  x_{\sig
(t)}\bigg).$$ The \textbf{Capelli pair} is $\cape_t :=
(h_{\alt}^+,h_{\alt}^-)$, where $h=x_1 y_1x_2y_2\cdots x_ty_t.$
Explicitly,
$$\cape_t := \bigg(\sum_{\sig \in S _ t^+} \, x_{\sig(1)}y_1x_{\sig (2)}y_2\cdots y_{t-1} x_{\sig (t)}y_{t},\sum_{\sig \in S _ t^-} \, x_{\sig(1)}y_1x_{\sig (2)}y_2\cdots y_{t-1} x_{\sig (t)}y_{t}\bigg).$$
\end{defn}

\begin{prop} Any $t$-alternating pair $(f,g)$ is a
PI for every \semiring0 $R$ spanned by fewer than~$t$ elements
over its center.\end{prop}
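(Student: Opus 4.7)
The plan is to adapt the classical PI-theoretic proof (used, e.g., for showing that the Capelli identity $\cape_t$ holds in any algebra generated by fewer than $t$ elements over its center) to the semiring setting, using the pair formulation $(f,g)$ as the substitute for ``$f-g=0$''.

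First I would unpack the hypothesis. Because the definition of a $t$-alternating pair comes from a $t$-linear template $h(x_1,\dots,x_t;y)$ (as in the constructions of $\stn_t$ and $\cape_t$), both $f$ and $g$ are multilinear in $x_1,\dots,x_t$. Write $R=\sum_{j=1}^{k} Z(R)\,a_j$ with $k<t$, so that any substitution $r_i\in R$ for $x_i$ decomposes as $r_i=\sum_{j=1}^k z_{ij}a_j$ with $z_{ij}$ in the center $Z(R)$. Expanding via multilinearity, one obtains
\[
f(r_1,\dots,r_t;\vec y)=\sum_{\phi:\{1,\dots,t\}\to\{1,\dots,k\}}\Big(\prod_{i=1}^t z_{i,\phi(i)}\Big)\,f(a_{\phi(1)},\dots,a_{\phi(t)};\vec y),
\]
and similarly for $g$, where I use centrality of the $z_{ij}$ to pull the same scalar coefficient out of corresponding terms of $f$ and $g$. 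So it suffices to check the identity on ``monomial'' substitutions $x_i\mapsto a_{\phi(i)}$ indexed by functions $\phi$.

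Now comes the pigeonhole step, which is the crux. Since $k<t$, every function $\phi:\{1,\dots,t\}\to\{1,\dots,k\}$ has a repeat: there exist $1\le i<j\le t$ with $\phi(i)=\phi(j)$. Therefore the substitution $x_i\mapsto a_{\phi(i)}$, $x_j\mapsto a_{\phi(j)}$ is invariant under swapping $x_i$ and $x_j$. But by the defining property of a $t$-alternating pair, that swap interchanges $f$ and $g$; hence, evaluated on this substitution, $f$ and $g$ produce the \emph{same} element of $R$. Summing (with the same central coefficient) over all $\phi$ gives $f(r_1,\dots,r_t;\vec y)=g(r_1,\dots,r_t;\vec y)$, i.e., $(f,g)\in \id(R)$.

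The main obstacle is not depth but bookkeeping: in the absence of additive inverses one cannot write ``$f-g=0$'' and cancel, so one must verify that the multilinear expansion works \emph{simultaneously} for $f$ and $g$ with identical scalar coefficients on matched terms. This is guaranteed by $t$-multilinearity together with centrality of the $z_{ij}$, which is why the pair formulation of a \semiring0 PI in Definition~\ref{PI2} is exactly the right framework here.
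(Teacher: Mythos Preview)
Your proof is correct and follows essentially the same approach as the paper's: reduce by multilinearity in $x_1,\dots,x_t$ (with central coefficients pulled out identically from $f$ and $g$) to substitutions by the spanning elements, then use pigeonhole to find a repeated entry and the alternating property to conclude $f=g$ on each such substitution. The paper's version is terser, but the logic is identical; your explicit remark that the same central scalar appears in the matching terms of both $f$ and $g$ is a useful clarification in the semiring setting.
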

\begin{proof} Suppose $R$ is spanned by $\{ b_1, b_2, \dots, b_{t-1}\}.$
We need to verify $$f\bigg(\sum \a_{i,1}b_{i_1} , \dots,  \sum
\a_{i,t}b_{i_t} , \dots \bigg) = g\bigg(\sum \a_{i,1}b_{i_1} ,
\dots, \sum \a_{i,t}b_{i_t} , \dots \bigg) .$$ Since $f$ and $g$
are linear in these entries, it suffices to verify
\begin{equation}\label{ver1}  f( b_{i_1} , \dots,  b_{i_t} , \dots) = g(b_{i_1} , \dots,  b_{i_t} , \dots)  \end{equation}
for all $i_1, \dots, i_t.$ But by hypothesis, two of these must be equal, say $i_k$ and $i_{k'}$,
so switching these two yields~\eqref{ver1} by the alternating hypothesis.
\end{proof}

 Let
 $\mun_{i,j}$ denote the matrix units. The \semiring0 version of the Amitsur-Levitzki theorem
\cite{AL}, that $\stn_{2n}\in \id(M_n(\Net))$, is  an immediate
consequence of Theorem \ref{meta2}, and its minimality follows
from:

 \begin{lem} Any pair of
multilinear polynomials $f(x_1, \dots, x_m)$ and $g(x_1, \dots,
x_m)$ having no common monomials do not comprise a PI of $M_n(R)$
unless $m \ge 2n $.\end{lem}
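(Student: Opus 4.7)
The plan is to mimic the classical proof of minimality of the Amitsur--Levitzki theorem. Assuming $m \le 2n-1$, I aim to produce matrices $a_1, \dots, a_m \in M_n(R)$ with $f(a_1,\dots,a_m) \ne g(a_1,\dots,a_m)$, contradicting the PI hypothesis. Since $f$ and $g$ have disjoint supports and are not simultaneously zero, pick a multilinear monomial $M = c \cdot x_{\sigma(1)} x_{\sigma(2)} \cdots x_{\sigma(m)}$ (with $c$ a positive integer coefficient) occurring in one of them, say in $f$ but not in $g$. After relabeling the variables by $\sigma^{-1}$, I may assume $M = c \cdot x_1 x_2 \cdots x_m$.

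The substitution is by matrix units along a ``zig-zag'' path in $\{1,\dots,n\}$: set $E_{2k-1} := \mun_{k,k}$ and $E_{2k} := \mun_{k,k+1}$, using only the first $m$ such matrix units (well defined since $m \le 2n-1$), and substitute $x_i \mapsto E_i$. A direct computation shows $E_1 E_2 \cdots E_m = \mun_{1,j}$ for an appropriate end index $j$, which is nonzero. For any other permutation $\pi$ of $\{1,\dots,m\}$, the product $E_{\pi(1)} E_{\pi(2)} \cdots E_{\pi(m)}$ is nonzero in $M_n(R)$ precisely when the sequence traces a directed walk (each column index matching the next row index) in the digraph whose edges are the $E_i$. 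An in-degree/out-degree count on this digraph gives vertex $1$ a net out-degree of $1$, vertex at the far end a net in-degree of $1$, and balances all intermediate vertices; moreover, at each vertex the loop must be consumed before the outgoing non-loop edge (otherwise the loop becomes unreachable, since the only in-edges have already been used). This pins the Eulerian walk, hence $\pi$, to the identity.

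With that uniqueness, substitution at $(E_1,\dots,E_m)$ annihilates every multilinear monomial other than $M$. Hence $f(E_1,\dots,E_m) = c \cdot \mun_{1,j}$ while $g(E_1,\dots,E_m) = \rzero$ (since $M$ does not appear in $g$, and every other multilinear monomial vanishes). Their $(1,j)$-entries differ, because $c \cdot \rone$ is nonzero in any proper semiring in the sense of Remark~\ref{idpar} (a sum of $c$ copies of $\rone$ cannot cancel to $\rzero$). This contradicts $(f,g) \in \id(M_n(R))$.

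The main obstacle is the combinatorial step of verifying the uniqueness of the Eulerian ordering in the chosen sub-digraph. This is standard for Amitsur--Levitzki-type minimality arguments, but it requires a careful vertex-by-vertex case analysis to exclude an ordering in which one departs a vertex prematurely, leaving its loop stranded; the degree bookkeeping above is what makes such premature departure impossible.
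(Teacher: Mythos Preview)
Your argument is correct and follows exactly the approach in the paper: the same staircase substitution $x_1\mapsto \mun_{1,1},\ x_2\mapsto \mun_{1,2},\ x_3\mapsto \mun_{2,2},\ x_4\mapsto \mun_{2,3},\dots$ is used there, yielding $f(\dots)=\mun_{1,\ell}$ (with $\ell=\lfloor m/2\rfloor+1$) and $g(\dots)=0$. The paper's version is terser and simply asserts the outcome; your explicit Eulerian-path justification for why no other permutation survives, and your remark that the coefficient $c\cdot\rone$ is nonzero in a proper \semiring0, fill in details the paper leaves implicit.
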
  \begin{proof} Rewriting indices we
may assume that $x_1 \cdots x_m$ appears as a monomial of $f,$ but
not of $g,$ and we note  (for $\ell = \left[\frac m 2\right] + 1$)
that
$$
\begin{array}{lll}
& f(\mun_{1,1}, \mun_{1,2}, \mun_{2,2}, \mun_{2,3}, \dots,
\mun_{k-1,k}, \mun_{k,k}, \dots)  & = \mun_{1,\ell}   \ne 0, \\[1mm]
  \text{ but} \quad & g(\mun_{1,1}, \mun_{1,2}, \mun_{2,2},
\mun_{2,3}, \dots, \mun_{k-1,k}, \mun_{k,k}, \dots) & = 0.
\end{array}
 $$
 \end{proof}

 Likewise, the identical proof of \cite[Remark 23.14]{Row} shows that the Capelli pair $\cape_{n^2}$ is not a PI of~$M_n(C),$ and in fact $(\mun_{1,1},0) \in \cape_{n^2}(M_n(R))$ for any
\semiring0 $R.$

\subsection{Surpassing identities}\label{layPI}

 The \textbf{surpassing identity}  $f\lmodg g$
holds  when $f(a_1, \dots, a_m) \lmodg g(a_1, \dots, a_m)$ for all
 $a_1, \dots, a_m \in R$.

\begin{examp} Take the general $2 \times 2$ matrix $A = \(\begin{matrix} a & b \\
c & d
\end{matrix}\).$ Then $\tr(A)= a+d$ and $\Det{A} = ad+bc.$
$A^2 = \(\begin{matrix} a^2 + bc & b(a+d) \\
c(a+d) & bc +d^2
\end{matrix}\),$ so
$$A^2 + adI =  \(\begin{matrix} a(a+d) + bc & b(a+d) \\
c(a+d) & bc +d(a+d)
\end{matrix}\)  =  \tr(A)A +  bcI  ,$$ implying
 $$A^2 + \Det{A}I =  \tr(A)A +  bc^\nu I  ,$$
yielding the   surpassing identity  $A^2 + \Det{A}I \lmodg
\tr(A)A$ for $2 \times 2$ matrices.
\end{examp}

We might hope for a surpassing identity involving alternating
terms in the Hamilton-Cayley polynomial, but a cursory examination
of matrix cycles dashes our hopes.

\begin{examp} Let $A = \(\begin{matrix} - & d & a \\
c & - & - \\
- & b  & -
\end{matrix}\).$
Then $ A^2 =  \(\begin{matrix} cd & ab   & - \\
- & cd & ac \\
 bc  & -  & -
\end{matrix}\)$
 and $A^3 = \(\begin{matrix} abc & cd^2 & acd \\
c^2d & abc & - \\
- & bcd  & abc
\end{matrix}\),$
implying $$A^3 = \alpha A + \Det{A}$$ in this case, where $\alpha $ denotes the other coefficient in $f_A.$ But for $A = \(\begin{matrix} a & - & -\\
- & b & - \\
- & -  & c
\end{matrix}\)$ we have  $$A^3 + \alpha A + 2\(\begin{matrix} - & - & - \\
- & abc & - \\
- & -  & abc
\end{matrix}\)=  \tr(A)A^2 + \Det{A},$$ so  neither $A^3 + \alpha A $ nor $ \tr(A)A^2 + \Det{A}$ necessarily  surpasses the other.
\end{examp}

\subsection{Layered surpassing identities}\label{layPI}

%
%

Since we want to deal with general layers, we write $2a$ (instead
of~$a^\nu$) for $a+a$, but note that $s(2a) = 2s(a).$
 When working with the layered structure, we can extend the notion of PI
from Definition~\ref{PI2} by making use of the following relations that arise naturally in the theory.

\begin{defn}\label{ghostsurpL}   The \textbf{$L$-surpassing relation}
$\lmodWL$ is given by
 \begin{equation}  a \lmodWL b \quad
\text{  iff either } \quad  \begin{cases} a=b+c &  \quad
\text{with}\quad c\quad  s(b)\text{-ghost},
  \quad \\ a=b,\\   
   a\nucong b & \quad\text{with}\quad
 a \quad  s(b)\text{-ghost}.\end{cases} \end{equation} \end{defn} It follows that if
$a \lmodWL b$, then $ a+b$ is $s(b)$-ghost. When $a \ne b$, this
means $a \ge _\nu b$ and $a$ is $s(b)$-ghost.

\begin{defn} \label{PI4}  The \textbf{surpassing $(L,\nu )$-relation}
$\lmodWLnu$ is given by
 \begin{equation}  a \lmodWLnu b \qquad
\text{  iff } \qquad a \lmodWL  b \quad
\text{and}\quad
   a\nucong b .\end{equation}

 The \textbf{surpassing $L$-identity}  $f\lmodL g$
holds for $f, g\in \Fun (R^{(n)},R)$   if $f(a_1, \dots, a_n)
\lmodL g(a_1, \dots, a_n)$ for all~
 $a_1, \dots, a_n \in R$.

 The \textbf{surpassing $(L,\nu )$-identity}  $f\lmodWLnu g$
holds for $f,g \in \Fun (R^{(n)},R)$ if $f(a_1, \dots, a_n)
\lmodWLnu g(a_1, \dots, a_n)$  for all
  $a_1, \dots, a_n \in R$.
\end{defn}

\subsubsection{Layered surpassing identities of commutative layered
semirings}

Just as the Boolean algebra satisfies the PI $x^2 =x,$ we have
some surpassing identities for commutative layered \domains0.

\begin{prop} (Frobenius identity)  $(x_1+x_2)^m \lmodWLnu  x_1^m +x_2^m .$
\end{prop}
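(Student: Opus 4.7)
The plan is to combine the ordinary binomial expansion---valid in any commutative semiring---with the specific addition rule of the layered semiring. Writing $x_i = \xl{a_i}{k_i}$ with $a_i \in \tG$ and $k_i \in L$, I split into two cases according to whether $a_1 = a_2$ in $\tG$.

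If $a_1 \ne a_2$, say $a_1 > a_2$, then by the addition rule in Construction~\ref{defn5} we have $x_1 + x_2 = x_1$, so $(x_1+x_2)^m = x_1^m$. Since $\tG$ is a cancellative ordered monoid, $a_1^m > a_2^m$, whence $x_1^m + x_2^m = x_1^m$ as well. Thus both sides are literally equal, and the surpassing $(L,\nu)$-identity holds via the ``$a = b$'' clause of Definition~\ref{ghostsurpL} (which trivially satisfies the extra $\nucong$ clause required by Definition~\ref{PI4}).

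If $a_1 = a_2 = a$, then $x_1 + x_2 = \xl{a}{k_1+k_2}$, so $(x_1+x_2)^m = \xl{a^m}{(k_1+k_2)^m}$, while $x_1^m + x_2^m = \xl{a^m}{k_1^m + k_2^m}$. Both have the same underlying tropical value $a^m \in \tG$, so they are $\nu$-congruent. The binomial identity in the commutative semiring $L$,
$$(k_1+k_2)^m \;=\; k_1^m + k_2^m + \sum_{j=1}^{m-1} \binom{m}{j}\, k_1^{m-j} k_2^j,$$
shows that the layer of $(x_1+x_2)^m$ dominates that of $x_1^m + x_2^m$, the excess being exactly the sort of the element $c := \sum_{j=1}^{m-1} \binom{m}{j}\, x_1^{m-j} x_2^j$ of $R$, which lives in the $a^m$-layer with $s(c)=\sum_{j=1}^{m-1} \binom{m}{j}\, k_1^{m-j} k_2^j$. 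Thus $(x_1+x_2)^m = (x_1^m + x_2^m) + c$, with $c$ being $s(x_1^m + x_2^m)$-ghost; combined with the $\nu$-congruence, this yields the surpassing $(L,\nu)$-relation.

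The main obstacle is primarily bookkeeping: one must confirm that the element $c$ produced by the binomial expansion meets the precise ``$s(b)$-ghost'' condition of Definition~\ref{ghostsurpL}, and that no degeneracy arises when some $k_i = 0$ (in which case the statement again reduces to a literal equality). The same case analysis applies verbatim in any commutative layered \domain0, since the addition there is still governed solely by the underlying $\nu$-value together with the sort, and the binomial identity holds in whichever indexing \semiring0 $L$ is serving as the layer ring.
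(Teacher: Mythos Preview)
Your direct approach---binomial expansion in the commutative \semiring0 $R$, followed by a case split on whether the underlying $\nu$-values coincide---is sound and considerably more explicit than the paper's own proof, which simply cites \cite[Remark~5.2]{IzhakianKnebuschRowen2011CategoriesI}. Case~1 (distinct $\nu$-values) is handled cleanly and correctly.

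There is, however, a genuine slip in Case~2. You claim that the cross-term element $c = \sum_{j=1}^{m-1}\binom{m}{j}\,x_1^{m-j}x_2^{j}$ is an $s(b)$-ghost, where $b = x_1^m + x_2^m$. This is not generally true. Take $L = \Net$, $m = 2$, $k_1 = 1$, $k_2 = 3$: then $s(c) = 2k_1k_2 = 6$ while $s(b) = k_1^2 + k_2^2 = 10$, so $s(c) < s(b)$ and the first clause of Definition~\ref{ghostsurpL} is not available. More generally, whenever $k_1$ and $k_2$ are far apart the middle binomial terms are dominated by $k_1^m + k_2^m$.

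The repair is immediate: invoke the \emph{third} clause of Definition~\ref{ghostsurpL} instead of the first. You already observed that $(x_1+x_2)^m \nucong x_1^m + x_2^m$ (both have $\nu$-value~$a^m$). Moreover, the same binomial identity in $L$ that you wrote down gives
\[
s\big((x_1+x_2)^m\big) \;=\; (k_1+k_2)^m \;=\; k_1^m + k_2^m + s(c) \;\ge\; k_1^m + k_2^m \;=\; s(b),
\]
so $(x_1+x_2)^m$ itself is the required $s(b)$-ghost. (If equality holds---i.e., the cross terms vanish, which forces $m\le 1$ or one $k_i = 0$---then the two sides are literally equal and the second clause applies, as you noted.) This, together with the $\nu$-equivalence, gives $\lmodWL$ and hence $\lmodWLnu$. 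So your argument stands with this one-line reroute; the ``bookkeeping'' you flagged is exactly where the issue lay.
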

\begin{proof} This is just a restatement of
\cite[Remark~5.2]{IzhakianKnebuschRowen2011CategoriesI}.
\end{proof}

\begin{prop}\label{permprime}    $(x_1+x_2+x_3)(x_1x_3+x_2x_3+x_1x_2) \lmodWLnu  (x_1+x_2)(x_1+x_3)(x_2+x_3) .$
More generally, let $g_1 = \sum_i  x_i,$ $g_2 = \sum_{i < j }  x_i
x_j,$ $ \dots$, and $g_{m-1} = \sum_{i }\prod_{j \neq i}  x_j$.
Then

\begin{equation}\label{twofacts}
  g_1\cdots g_{m-1} \lmodWLnu  \prod_{i < j } ( x_i +  x_j) .
\end{equation}
\end{prop}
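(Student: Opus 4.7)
The plan is to reduce the layered surpassing identity to a classical identity in the commutative polynomial ring $\Net[x_1,\ldots,x_m]$. Specifically, I aim to establish
$$
g_1 g_2 \cdots g_{m-1} \;=\; \prod_{i<j}(x_i+x_j) \,+\, h
$$
in $\Net[x_1,\ldots,x_m]$, where $h$ has nonnegative integer coefficients and $\supp(h)\subseteq\supp(\prod_{i<j}(x_i+x_j))$. Given this, the surpassing identity $\lmodWLnu$ follows by direct evaluation: at any point $\bfa\in R^{(m)}$, a polynomial in $\Net[x]$ evaluates in the layered semiring to the $R$-sum over its monomials, and the value is determined by the $\nu$-dominant monomials together with the $L$-sum (via the sort map $s$) of their integer coefficients. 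The support condition ensures that the $\nu$-dominant monomials on both sides coincide, so both evaluate to elements of the same $\nu$-component; the nonnegativity $h\ge 0$ then forces the LHS-layer to exceed the RHS-layer by an $s$-ghost summand, which is precisely the content of $\lmodWLnu$.

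For the support condition, every monomial $x_1^{\alpha_1}\cdots x_m^{\alpha_m}$ appearing in $g_1\cdots g_{m-1}$ arises from a choice of subsets $S_k\subseteq\{1,\ldots,m\}$ with $|S_k|=k$, setting $\alpha_i=|\{k:i\in S_k\}|$. I claim such an $\alpha$ is realizable as the in-degree sequence of an orientation of the complete graph $K_m$, and hence occurs in $\prod_{i<j}(x_i+x_j)$. By Landau's theorem, this reduces to verifying $\sum_i\alpha_i=\binom{m}{2}$ (immediate, since $\sum_k|S_k|=1+2+\cdots+(m-1)$) and $\sum_{i\in T}\alpha_i\ge\binom{|T|}{2}$ for every $T\subseteq\{1,\ldots,m\}$. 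Writing $t=|T|$, the bound $|S_k\setminus T|\le\min(k,m-t)$ yields $|S_k\cap T|\ge\max(0,\,k+t-m)$, whence
$$
\sum_{i\in T}\alpha_i \;=\; \sum_{k=1}^{m-1}|S_k\cap T| \;\ge\; \sum_{k=m-t+1}^{m-1}(k+t-m) \;=\; 1+2+\cdots+(t-1) \;=\; \binom{t}{2}.
$$

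For the nonnegativity $h\ge 0$, my approach is induction on $m$, exploiting the recursions
$$
g_k^{(m)} \;=\; g_k^{(m-1)} + x_m\, g_{k-1}^{(m-1)}, \qquad \prod_{i<j\le m}(x_i+x_j) \;=\; \prod_{i<j<m}(x_i+x_j)\cdot \prod_{i<m}(x_i+x_m),
$$
and comparing the $x_m$-degree expansions over $\Net[x_1,\ldots,x_{m-1}]$. The extreme $x_m$-coefficients (at $x_m^0$ and $x_m^{m-1}$) reduce immediately to the inductive hypothesis in $m-1$ variables, since both become $\prod_{k=1}^{m-2} g_k^{(m-1)}$ multiplied by $g_{m-1}^{(m-1)}$ or $1$. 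The intermediate $x_m^j$-coefficients require a more delicate matching: on the LHS it equals $\sum_{T\subseteq\{1,\ldots,m-1\},\,|T|=j}\prod_{k\in T}g_{k-1}^{(m-1)}\prod_{k\notin T}g_k^{(m-1)}$, whereas on the RHS it equals $\big(\prod_{i<j<m}(x_i+x_j)\big)\cdot g_{m-1-j}^{(m-1)}$. I expect this term-matching to be the main technical obstacle, likely requiring auxiliary symmetric-function identities and careful reorganization. As a sanity check, for $m=3$ one computes $g_1g_2-(x_1+x_2)(x_1+x_3)(x_2+x_3)=x_1x_2x_3$, which indeed has coefficient $2$ in the product, confirming both the support condition and $h\ge 0$.
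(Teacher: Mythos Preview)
The paper's own proof is a one-line citation: it declares the proposition to be a restatement of \cite[Theorem~8.51]{IzhakianRowen2007SuperTropical} and gives no internal argument. So there is nothing in the paper to compare your approach against; yours is the only self-contained attempt.

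Your overall strategy---lift to a coefficient-wise inequality $g_1\cdots g_{m-1}\ge\prod_{i<j}(x_i+x_j)$ in $\Net[x_1,\ldots,x_m]$ together with the support containment $\supp(h)\subseteq\supp\big(\prod_{i<j}(x_i+x_j)\big)$, and then descend to the layered semiring---is reasonable, and the support argument via Landau's tournament criterion is correct and quite nice. But the proposal has a genuine gap: you do not prove that $h\ge 0$. You set up an induction on $m$, write out the $x_m^j$-coefficients on both sides, and then say ``I expect this term-matching to be the main technical obstacle, likely requiring auxiliary symmetric-function identities and careful reorganization.'' That sentence is an acknowledgement that the crux of the argument is missing. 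The inequality you need---that every monomial of $\prod_{i<j}(x_i+x_j)$ appears in $g_1\cdots g_{m-1}$ with at least the same coefficient---is exactly the content of the proposition, and it does not fall out of your recursion without substantial further work. The intermediate $x_m^j$-coefficients you isolate are sums of very different shapes (on the left, a sum over $j$-subsets of products of shifted elementary symmetrics; on the right, a single elementary symmetric times the inductive product), and showing termwise domination between them is not routine.

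A smaller point: your reduction step (``the nonnegativity $h\ge 0$ then forces the LHS-layer to exceed the RHS-layer by an $s$-ghost summand, which is precisely the content of $\lmodWLnu$'') skips the check that $h(\bfa)$ is an $s(b)$-ghost in the sense required by Definition~\ref{ghostsurpL}. Since that notion is not made fully explicit in the layered setting here, you should spell this out rather than assert it.
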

\begin{proof} This is just a restatement of
\cite[Theorem~8.51]{IzhakianRowen2007SuperTropical}.
\end{proof}

\subsection{Layered surpassing identities of
matrices}\label{matrices}$ $

%




We applied the strong transfer principle of Akian, Gaubert, and
Guterman \cite[Theorem~4.21]{AGG} to the (standard) supertropical
matrix semiring in \cite{IzhakianRowen2009MatricesII}. We would
like to make a similar argument  in the layered case, but  must
avoid the following kind of counterexamples, pointed out by Adi
Niv:

\begin{examp}\label{exm:8.1} Suppose $A = \(\begin{matrix} \xl{10}{1} & \xl{4}{2} \\
  \xl{4}{2} & \xl{0}{10}
\end{matrix}\).$
Then $A^2 = \(\begin{matrix} \xl{20}{1} & \xl{14}{2} \\
  \xl{14}{2} & \xl{8}{4}
\end{matrix}\),$
so $|A| = \xl{10}{10}$ whereas  $|A^2| = \xl{28}{8}$, which does
not $\Net$-surpass $|A|^2$ (and does not even $\Net$-surpass
$|A|$).
\end{examp}

The difficulty in the example was that some $\nu$-small entry of
$A$ has a high layer which provides $|A|$ a high layer but does
not affect the powers of $A$. There is a  version of surpassing
which is useful   in this context.

\begin{defn} An element $c \in  R$ is a
\textbf{strong} $\ell$\textbf{-ghost} (for $\ell \in L_+)$ if
$s(c) \ge 2\ell$.

The \textbf{strong $\ell$-surpassing relation} $a \SlmodWl b$
holds in an $L$-layered \domain0 $R$, if  either
\begin{equation}
\begin{cases} a=b+c &  \quad
\text{with}\quad c \quad  \text{a strong }  \ell \text{-ghost}
  \quad \\ \quad\text{or} \\  a=b.\end{cases} \end{equation}
We often take $\ell = s(b).$ In this case $b+b \SlmodWl b$ (as
well as $b+b \lmodWl b$).


 The \textbf{strong $\ell$-surpassing relation} $(a_{i,j}) \SlmodWl
(b_{i,j})$   holds for matrices $(a_{i,j})$ and $(b_{i,j})$, if
$a_{i,j} \SlmodWl b_{i,j}$ for each $i,j$. \end{defn}

We say that a matrix $A$ is $\ell$-
\textbf{layered} if each entry has layer $\ge \ell$. We are ready for our other two versions of layered identities.

\begin{defn}

 The \textbf{strong $(\ell,d)$-surpassing identity}  $f \SdltmodWl g$
holds for $f,g \in \Fun (M_n(R)^{(m)},M_n(R))$ if $f(A_1, \dots,
A_m) \SltmodWl g(A_1, \dots, A_m)$ with $\tilde \ell = \ell^d,$
for all $\ell$-layered matrices $A_1, \dots, A_m \in M_n(R)$.
\end{defn}

 In
the standard supertropical theory we take $\ell = \tilde \ell =1,$
but in the general layered theory we may need to consider other
$\ell$.
Formally set $P(x_1, \dots, x_\ell)= P^+ - P^-$ and $ Q(x_1,
\dots, x_\ell)= Q^+ - Q^-$.
 We say $Q$ is \textbf{admissible} if
the monomials of $Q^+$ and $Q^-$ are distinct, for each pair
$(i,j)$.

We then obtain the following metatheorem, along the lines of
\cite{AGG} (just as in
\cite[Theorem~2.4]{IzhakianRowen2009MatricesII}):

\begin{thm}\label{STP1} Suppose $P=Q$ is a homogeneous matrix identity of $M_n(\mathbb Z)$ of
degree $d$, with $Q$ admissible.  Then   the
 matrix \semiring0 $M_n(R)$ satisfies the strong $(\ell,d)$-surpassing
identity $$P ^+ + P^- \SdltmodWl  Q^+ + Q^-.$$
\end{thm}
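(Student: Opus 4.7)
The plan is to lift the classical matrix identity $P=Q$ in $M_n(\mathbb Z)$ to an entry-wise integer relation between the coefficients of individual monomials, via Theorem~\ref{meta2}, and then track what extra layer is produced in the layered semiring when one replaces the subtraction $P^{+}-P^{-}=Q^{+}-Q^{-}$ by the two additions $P^{+}+P^{-}$ and $Q^{+}+Q^{-}$.

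First, rearranging $P=Q$ gives $P^{+}+Q^{-}=P^{-}+Q^{+}$, and exactly as in the proof of Theorem~\ref{meta2} this is a \semiring0 identity over $M_n(\mathbb N[\xi_1,\xi_2,\dots])$, hence entry-wise a polynomial identity in $\mathbb N[\xi]$. Thus for every matrix entry $(i,j)$ and every degree-$d$ monomial $\mu$ in the generic $\xi$'s, the integer coefficients obey $p^{+}_{\mu}+q^{-}_{\mu}=p^{-}_{\mu}+q^{+}_{\mu}$. Admissibility forces at most one of $q^{+}_{\mu},q^{-}_{\mu}$ to be nonzero, so a three-case analysis yields $k_{\mu}:=p^{+}_{\mu}-q^{+}_{\mu}=p^{-}_{\mu}-q^{-}_{\mu}\ge 0$ and hence
\[
(p^{+}_{\mu}+p^{-}_{\mu})-(q^{+}_{\mu}+q^{-}_{\mu})\;=\;2k_{\mu};
\]
each monomial appears in $P^{+}+P^{-}$ with multiplicity exceeding its multiplicity in $Q^{+}+Q^{-}$ by an \emph{even} nonnegative integer.

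Next, I would evaluate at $\ell$-layered $A_{1},\dots,A_{m}\in M_n(R)$. Degree-$d$ homogeneity forces $s(\mu(A))\ge \ell^{d}$ for every monomial $\mu$ that appears. In the layered semiring, the integer coefficient $c_{\mu}$ contributes $c_{\mu}\cdot\mu(A)$, an element of the same $\nu$-value as $\mu(A)$ but layer $c_{\mu}\,s(\mu(A))$, and the summation rule~\eqref{14} selects the dominant $\nu$-value and accumulates layers from the monomials attaining it. By the preceding step, any contribution to the excess of $(P^{+}+P^{-})_{i,j}$ over $(Q^{+}+Q^{-})_{i,j}$ carries coefficient $2k_{\mu}$, so layer at least $2k_{\mu}\ell^{d}\ge 2\ell^{d}$. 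Splitting into the two cases in which either the dominant $\nu$-values of the two sides agree (with excess sitting at that common value) or $P^{+}+P^{-}$ has a strictly larger dominant $\nu$-value (contributed by some $\mu$ with $k_{\mu}\ge 1$), one concludes entry-wise that
\[
(P^{+}+P^{-})(A_{1},\dots,A_{m})\;=\;(Q^{+}+Q^{-})(A_{1},\dots,A_{m})+c
\]
with $c$ either $\zero$ or a strong $\ell^{d}$-ghost matrix, which is exactly $P^{+}+P^{-}\SdltmodWl Q^{+}+Q^{-}$.

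The main obstacle, of the sort illustrated by Example~\ref{exm:8.1}, is that an entry with anomalously high layer could distort naive layer counts and produce a spurious ghost on only one side. Admissibility of $Q$ (no internal cancellation between $Q^{+}$ and $Q^{-}$) is what ensures that the full factor $2k_{\mu}$ survives in the passage from the classical to the layered identity, and the uniform $\ell$-layering of the inputs is what ensures that each excess monomial contributes layer at least $\ell^{d}$. Together these force the ghost excess to be a multiple of $2\ell^{d}$, giving the \emph{strong} surpassing $\SdltmodWl$ rather than only the weaker relation $\lmodWL$.
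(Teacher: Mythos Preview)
Your argument is correct and is precisely the fleshing-out of the transfer-principle approach that the paper merely points to (citing \cite{AGG} and \cite[Theorem~2.4]{IzhakianRowen2009MatricesII}) without supplying details. The key observation---that admissibility of $Q$ forces the monomial-by-monomial excess $(p^{+}_{\mu}+p^{-}_{\mu})-(q^{+}_{\mu}+q^{-}_{\mu})=2k_{\mu}$ with $k_{\mu}\ge 0$, so that the difference $c=\sum_{k_\mu>0}2k_\mu\,\mu(A)$ has every contributing term of layer at least $2\ell^{d}$---is exactly what upgrades the surpassing from $\lmodWL$ to the \emph{strong} relation $\SdltmodWl$, and your case analysis establishing $k_\mu\ge 0$ from $p^+_\mu+q^-_\mu=p^-_\mu+q^+_\mu$ together with $\min(q^+_\mu,q^-_\mu)=0$ is clean and complete.
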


 Here are some applications.

\begin{cor} $\Det{AB} \SdltmodWl \Det{A}\Det{B}$ for $L$-layered
$n\times n$ matrices $A$ and $B$, where  $d =  2n$.\end{cor}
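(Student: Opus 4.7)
The plan is to invoke Theorem~\ref{STP1} with the classical multiplicativity identity $\det(AB)=\det(A)\det(B)$ of ordinary determinants over $M_n(\mathbb Z)$. Viewed in the polynomial ring $\mathbb Z[a_{i,j},b_{i,j}]$ in the entries of generic $n\times n$ matrices $A,B$, this is a homogeneous identity of total degree $d=2n$ (degree $n$ in the $a_{i,j}$ and degree $n$ in the $b_{i,j}$). First I would set $P:=\det(AB)=P^+-P^-$ and $Q:=\det(A)\det(B)=Q^+-Q^-$, splitting each side into its monomials of positive and negative sign. Because the layered determinant is the permanent (the sign-free sum over $S_n$), evaluation on any pair of $\ell$-layered matrices $A,B$ yields $\Det{AB}=P^++P^-$ and $\Det{A}\Det{B}=Q^++Q^-$.

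The crucial check is admissibility of $Q$. Expanding
\[\det(A)\det(B)=\sum_{(\sigma,\tau)\in S_n\times S_n}\sg(\sigma)\,\sg(\tau)\,\prod_{i}a_{i,\sigma(i)}\prod_{j}b_{j,\tau(j)},\]
one reads off that the pair $(\sigma,\tau)$ is uniquely recovered from its monomial, by reading $\sigma$ off the $a$-indices and $\tau$ off the $b$-indices. Hence the monomial supports of $Q^+$ and $Q^-$ are disjoint, so $Q$ is admissible in the sense required. It is important (and the real point of the argument) that no analogous check is imposed on $P$, because Cauchy--Binet-type cancellations in the expansion of $\det(AB)$ genuinely merge monomials coming from distinct index data $(\sigma,\mathbf k)$ and would defeat any naive admissibility claim for $P$ itself.

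With homogeneity and admissibility in hand, Theorem~\ref{STP1} delivers the strong $(\ell,2n)$-surpassing identity $P^++P^-\SdltmodWl Q^++Q^-$, which, under the identifications in the first paragraph, is precisely $\Det{AB}\SdltmodWl\Det{A}\Det{B}$. The main obstacle is thus isolating admissibility of $Q$; everything else reduces to matching the sign-free permanent-style determinant with the polynomial $P^++P^-$ (respectively $Q^++Q^-$) and reading off the degree $d=2n$.
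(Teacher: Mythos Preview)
Your proposal is correct and follows exactly the route the paper intends: the corollary is stated immediately after Theorem~\ref{STP1} with no separate proof, so the paper's implicit argument is precisely the application of that theorem to the classical identity $\det(AB)=\det(A)\det(B)$ over $M_n(\mathbb Z)$. Your explicit verification of admissibility of $Q=\det(A)\det(B)$ (each monomial determines $(\sigma,\tau)$ uniquely) and your identification of $P^{+}+P^{-}$ and $Q^{+}+Q^{-}$ with the layered permanents $\Det{AB}$ and $\Det{A}\Det{B}$ simply spell out what the paper leaves to the reader.
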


%

Given an  $L$-layered matrix $A$ and the polynomial $$f_A :=
\Det{\la I + A} = \al_n \lm^n + \cdots + \al_1 \lm + \al_0,$$ we
define the polynomial $\widetilde {f_A}$ to be
$$ \Inu{ f_A } =  \Inu{\al}_n \lm^{n-1} + \cdots +  \Inu{ \al}_2 \lm +
 \Inu{ \al}_1,$$
where $s(\Inu{\al}_i) = \ell^{n-i}$ and  $\Inu{ \al}_i \nucong
\al_i$.
\begin{thm}\label{adjform0} $\widetilde{
f_A}(A) \SdltmodWl \adj{A}$,   where  $d=
 n-1$, for any $\ell$-layered matrix  $A$. \end{thm}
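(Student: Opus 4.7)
The plan is to derive the claim from the classical Cayley--Hamilton identity in $M_n(\mathbb Z)$ and lift it to the layered semiring via the strong transfer principle (Theorem~\ref{STP1}), with careful tracking of the layer contributions.

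First I would establish the relevant classical identity. Writing $f_A(\lambda) = \lambda\,\widetilde{f_A}(\lambda) + \alpha_0$, Cayley--Hamilton applied at $\lambda \mapsto -A$ in $M_n(\mathbb Z)$ yields $A\,\widetilde{f_A}(-A) = \alpha_0 I = \det(A)\,I$, so $\widetilde{f_A}(-A) = \adj{A}$ (for invertible $A$ over $\mathbb Q$, and then by polynomial identity in the entries). Rearranging gives $\adj{A} = \sum_{k=1}^n (-1)^{k-1}\alpha_k A^{k-1}$, so subtracting from $\widetilde{f_A}(A) = \sum_{k=1}^n \alpha_k A^{k-1}$ produces the homogeneous matrix identity
\[
\widetilde{f_A}(A) \;-\; \adj{A} \;=\; 2 \sum_{\substack{k \text{ even} \\ 2 \le k \le n}} \alpha_k\, A^{k-1}
\]
in $M_n(\mathbb Z[x_{ij}])$, of degree $n-1$, whose correction is divisible by $2$.

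Next I would translate this to the layered setting. Both $\widetilde{f_A}(A)_{ij}$ and $\adj{A}_{ij}$ unfold as tropical sums whose terms arise from products of $n-1$ entries of $A$, each carrying layer at least $\ell^{n-1}$: in $\widetilde{f_A}(A)_{ij}$ each term is $\widehat{\alpha}_k$ (layer $\ell^{n-k}$) times a path of length $k-1$ (layer $\ell^{k-1}$), and in $\adj{A}_{ij}$ each term is a product of $n-1$ entries. Combinatorially, the monomials in $\adj{A}_{ij}$ correspond to disjoint-covering configurations of $[n]$ into cycles together with one $i$-to-$j$ path; those in $\widetilde{f_A}(A)_{ij}$ correspond to any cycle-plus-path configurations, possibly overlapping or not covering $[n]$. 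For each disjoint-covering configuration, the contribution is identical on both sides. For each overlapping or non-covering configuration, the classical identity (with its factor $2$ on the correction) forces the corresponding monomial to arise with multiplicity at least $2$ in $\widetilde{f_A}(A)_{ij}$; hence in the layered tropical sum such terms acquire layer $\ge 2\ell^{n-1}$, i.e.\ are strong $\ell^{n-1}$-ghosts. Combining, each entry of $\widetilde{f_A}(A) - \adj{A}$ is either zero or a strong $\ell^{n-1}$-ghost, giving $\widetilde{f_A}(A) \SdltmodWl \adj{A}$ with $\tilde\ell = \ell^{n-1}$ and $d = n-1$.

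The main obstacle will be making the combinatorial pairing on overlapping/non-covering configurations explicit so as to guarantee the factor $2$ at the layer level. This is precisely what the classical coefficient $2$ in the Cayley--Hamilton correction encodes, and the cleanest route is either to invoke Theorem~\ref{STP1} as a meta-theoretic shortcut applied to the classical identity above, or to perform a direct digraph and Hall's marriage-theorem argument in the spirit of the proofs of Theorems~\ref{adjeq} and~\ref{doublead}.
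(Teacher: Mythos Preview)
Your ``cleanest route'' \emph{is} the paper's proof: the paper simply says ``This is an identity for $M_n(\mathbb Z)$, using the usual determinant,'' i.e.\ it applies Theorem~\ref{STP1} directly. You already have the right classical identity for this, namely $\widetilde{f_A}(-A)=\adj{A}$ in $M_n(\mathbb Z)$. Take $Q=\adj{A}$ (admissible, since the cofactor expansion of each entry has pairwise distinct monomials) and $P=\widetilde{f_A}(-A)$; both are homogeneous of degree $n-1$ in the entries. Then $Q^{+}+Q^{-}$ is the permanent-based (tropical) adjoint, and $P^{+}+P^{-}$ is the permanent-based $\widetilde{f_A}(A)$ (the sign flips in $(-A)^{k-1}$ and in the determinantal $\alpha_k$ disappear when one passes to $P^{+}+P^{-}$). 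Theorem~\ref{STP1} then gives exactly $\widetilde{f_A}(A)\SdltmodWl\adj{A}$ with $d=n-1$.

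The detour through $\widetilde{f_A}(A)-\adj{A}=2\sum_{k\ \text{even}}\alpha_k A^{k-1}$ is unnecessary and, as you use it, has a gap. The coefficient $2$ lives in the \emph{signed} integer identity; it tells you that the signed coefficient of each monomial in $\widetilde{f_A^{\mathrm{cl}}}(A)-\adj^{\mathrm{cl}}(A)$ is even. From this you can indeed extract a parity statement (e.g.\ if a monomial is absent from the classical adjoint then its unsigned multiplicity in $\widetilde{f_A}(A)$ is $\ge 2$), but turning such parity information into the full strong $(\ell,d)$-surpassing relation for the tropical sums still requires the bookkeeping that is the content of the transfer principle. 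In other words, your combinatorial paragraph would end up reproving Theorem~\ref{STP1} rather than bypassing it. Drop the rearrangement and apply Theorem~\ref{STP1} to $P=\widetilde{f_A}(-A)$, $Q=\adj{A}$ directly; that is the one-line argument the paper intends.
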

\begin{proof} This is an identity for $M_n(\mathbb Z),$ using the
usual determinant.  \end{proof}

%
%

\begin{prop}\label{adjform1} $\adj{\adj{A}} \SdltmodWl \Det{A}^{n-2}A$, where $d =
 n-1$, for any $\ell$-layered matrix  $A$.
\end{prop}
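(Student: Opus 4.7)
The plan is to mirror the strategy of Theorem~\ref{adjform0}: apply the metatheorem, Theorem~\ref{STP1}, to a classical polynomial identity over $M_n(\mathbb Z)$. The underlying classical identity is
\[
\adj(\adj A) \;=\; (\det A)^{n-2}\, A,
\]
which holds in $M_n(\mathbb Z[a_{ij}])$. A direct derivation is to apply $\adj$ to Cramer's relation $A\cdot \adj A = \det(A)\,I$, use $\adj(XY)=\adj(Y)\adj(X)$ together with $\adj(cI)=c^{n-1}I$ to obtain $\adj(\adj A)\cdot \adj A = (\det A)^{n-1} I$, then multiply on the right by $A$ and cancel the non-zero-divisor $\det A$ in the integral domain $\mathbb Z[a_{ij}]$.

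Writing $P:=\adj(\adj A)$ and $Q:=(\det A)^{n-2}A$, I would put each entry into reduced polynomial form in $\mathbb Z[a_{ij}]$ and split $P = P^+ - P^-$, $Q = Q^+ - Q^-$ according to the sign of the coefficient. Because $Q$ is reduced, the monomial supports of $Q^+$ and $Q^-$ are disjoint, so the admissibility hypothesis of Theorem~\ref{STP1} is automatic. The theorem then yields the strong $(\ell,d)$-surpassing identity $P^+ + P^- \SdltmodWl Q^+ + Q^-$. Translated through the permanent-based adjoint and determinant of \S\ref{sec4}, the sums $P^+ + P^-$ and $Q^+ + Q^-$ read exactly as $\adj(\adj A)$ and $\Det A^{\,n-2}A$ computed in the layered \semiring0, which is the desired conclusion.

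The main point requiring care is the value of the layer exponent $d$. A naive count of total degree in the $a_{ij}$ on either side gives $(n-1)^2$, but the stated $d=n-1$ is sharper, and the natural way to recover it is to treat the second adjugate as a single application of Theorem~\ref{adjform0} to the matrix $\adj A$, which is already $\ell^{n-1}$-layered when $A$ is $\ell$-layered (each entry being a sum of products of $n-1$ entries of $A$). The layer threshold then compounds as $(\ell^{n-1})^{1}=\ell^{n-1}$ rather than $\ell^{(n-1)^2}$. The bookkeeping for this iterated application---in particular, verifying that all accumulated errors remain strong $\ell^{n-1}$-ghosts after the classical cancellation of $\det A$ is mimicked in the layered setting---is the step I expect to be the main technical obstacle; a fallback, should this tightening fail, is to settle for the weaker but fully automatic identity $\det(A)\cdot \adj(\adj A)\SdltmodWl \Det A^{n-1}A$ and then to remove the factor $\Det A$ by layered-specific manipulations along the lines of \cite{IzhakianRowen2009MatricesII}.
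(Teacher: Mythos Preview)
Your core strategy---invoke the classical identity $\adj(\adj A)=(\det A)^{n-2}A$ over $\mathbb Z$ and feed it into Theorem~\ref{STP1}---is exactly the paper's approach. The proposition carries no separate proof in the paper; it is meant to be read as a second instance of the one-line argument given for Theorem~\ref{adjform0} (``This is an identity for $M_n(\mathbb Z)$'').

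Your extended discussion of the exponent $d$ is more scrupulous than the paper itself. You are right that the homogeneous degree of the identity $\adj(\adj A)=(\det A)^{n-2}A$ in the entries $a_{ij}$ is $(n-1)^2$, so a direct application of Theorem~\ref{STP1} yields $d=(n-1)^2$, not $n-1$. Your attempted repair, however, does not go through: Theorem~\ref{adjform0} relates $\widetilde{f_A}(A)$ to $\adj A$, not $\adj(\adj A)$ to anything, so ``applying it once more to $\adj A$'' does not produce the desired relation; and even granting that $\adj A$ is $\ell^{n-1}$-layered, a second adjoint step at degree $n-1$ would give threshold $(\ell^{n-1})^{n-1}=\ell^{(n-1)^2}$, the same as before. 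The discrepancy with the stated $d=n-1$ appears to be an imprecision in the paper rather than a gap in your argument; your transfer-principle proof is complete with $d=(n-1)^2$.
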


Questions for further thought:

\begin{enumerate}
   \item[Q1.] What are all the \semiring0 PIs of $M_n(R)$?

Specifically, we have the Specht-like question:

  \item[Q2.]
Are all \semiring0 PIs of  $M_n(R)$ a consequence of a given
finite set?
\end{enumerate}

\begin{examp} It is shown in \cite{IzhakianMargolisIdentity} that the semiring of $2\times 2$ matrices over
the max-plus algebra satisfies the semigroup identity
\begin{equation}\label{eq:2X2} A
B^2 A \ A B \ A B^2 A \ds =  A B^2 A \  BA  \ A B^2 A.
\end{equation}
The way  of proving this identity is essentially based on showing
that pairs of polynomials corresponding to compatible entries in
the right and the left product above define the same function.
This identification is performed  by using the machinery of Newton
polytopes, and thus is valid also for supertropical polynomials.
From the results of \cite{IzhakianMargolisIdentity}, we also
conclude that this identity is minimal.
\end{examp}

\end{document}